\journal{Theoretical Computer Science}
\newtheorem{theorem}{Theorem}
\newtheorem{proposition}[theorem]{Proposition}
\newtheorem{corollary}[theorem]{Corollary}
\newtheorem{lemma}[theorem]{Lemma}
\theoremstyle{definition}
\newtheorem{definition}[theorem]{Definition}
\newcommand{\markupdraft}[2]{
    \ifthenelse{\equal{#1}{display}}{#2}{}
    \ifthenelse{\equal{#1}{color}}{\color{#2}}{}
}
\newcommand{\notecolored}[3][]{\markupdraft{display}{{\color{#2}\noindent[Note (#1): #3]}}}
\newcommand{\newcolored}[3][]{{\markupdraft{color}{#2}#3}
    \ifthenelse{\equal{#1}{}}{}{\markupdraft{display}{{\color{yellow!70!black}[#1]}}}}
\newcommand{\del}[2][]{{\markupdraft{display}{{\color{orange}[removed: ``#2''[#1]]}}}} 
\newcommand{\new}[2][]{\newcolored[#1]{blue}{#2}}
\newcommand{\nnew}[2][]{\newcolored[#1]{red}{#2}}
\newcommand{\note}[2][]{\notecolored[#1]{green}{#2}}
\newcommand{\todo}[2][]{\markupdraft{display}{{\color{red}\noindent++TODO: #2 {\color{yellow}(#1)}++}}}
\renewcommand{\del}[1]{}  
\renewcommand{\markupdraft}[2]{}  
\newcommand{\niko}[1]{\note[Niko]{\color{brown} #1}}
\providecommand{\argmax}{\operatornamewithlimits{argmax}} 
\DeclareMathOperator{\Tr}{Tr}     
\DeclareMathOperator{\Cov}{Cov}   
\DeclareMathOperator{\Cond}{Cond} 
\DeclareMathOperator{\diag}{diag} 
\providecommand{\R}{\mathbb{R}} 
\providecommand{\E}{\mathbb{E}} 
\providecommand{\T}{\mathrm{T}} 
\providecommand{\rmd}{\mathrm{d}} 
\providecommand{\ind}[1]{\mathbbm{1}_{#1}} 
\renewcommand{\geq}{\geqslant} 
\renewcommand{\leq}{\leqslant} 
\DeclarePairedDelimiterX{\inner}[2]{\langle}{\rangle}{#1, #2}
\DeclarePairedDelimiter{\norm}{\lVert}{\rVert}
\DeclarePairedDelimiter{\abs}{\lvert}{\rvert}
\let\oldsqrt\sqrt
\renewcommand{\sqrt}[2][]{\oldsqrt[\leftroot{-2}\uproot{2}#1]{#2}}
\newcommand{\eye}{\mathbf{I}}
\newcommand{\cm}{c_{m}}
\newcommand{\Hess}{\mathbf{A}}
\newcommand{\ns}{\bar{\sigma}}
\newcommand{\ee}{\mathbf{e}}
\newcommand{\xmean}{{\bm{m}}}
\newcommand{\NN}{\mathcal{N}}
\newcommand{\eig}{d}
\newcommand{\muw}{\mu_w}
\providecommand{\Hesst}{\Hess}
\providecommand{\eone}{\bm{n}}
\providecommand{\etwo}{\mathbf{N}}
\providecommand{\asynqg}{\varphi}
\renewcommand{\textstyle}{}
\providecommand{\rev}[1]{{#1}}
\begin{document}

\begin{frontmatter}

\title{Quality Gain Analysis of the Weighted Recombination Evolution Strategy on General Convex Quadratic Functions\tnoteref{mytitlenote}}
\tnotetext[mytitlenote]{This is the extension of our extended abstract presented at FOGA'2017 \cite{Akimoto2017foga}.}

\author[tsukuba]{Youhei Akimoto\corref{mycorrespondingauthor}}\ead{akimoto@cs.tsukuba.ac.jp}
\author[inria]{Anne Auger}\ead{anne.auger@inria.fr}
\author[inria]{Nikolaus Hansen}\ead{nikolaus.hansen@inria.fr}
\address[tsukuba]{Faculty of Engineering, Information and Systems, University of Tsukuba, Japan}
\address[inria]{Inria, RandOpt Team, CMAP, Ecole Polytechnique, France}
\cortext[mycorrespondingauthor]{Corresponding author}

\begin{abstract}
  Quality gain is the expected relative improvement of the function value in a single step of a search algorithm. Quality gain analysis reveals the dependencies of the quality gain on the parameters of a search algorithm, based on which one can derive the optimal values for the parameters. In this paper, we investigate evolution strategies with weighted recombination on general convex quadratic functions. We derive a bound for the quality gain and two limit expressions of the quality gain. From the limit expressions, we derive the optimal recombination weights and the optimal step-size, and find that the optimal recombination weights are independent of the Hessian of the objective function. Moreover, the dependencies of the optimal parameters on the dimension and the population size are revealed. Differently from previous works where the population size is implicitly assumed to be smaller than the dimension, our results cover the population size proportional to or greater than the dimension. \rev{Numerical simulation shows that the asymptotically optimal step-size well approximates the empirically optimal step-size for a finite dimensional convex quadratic function.}
\end{abstract}

\begin{keyword}
  Evolution strategy
  \sep weighted recombination
  \sep quality gain analysis
  \sep optimal step-size
  \sep general convex quadratic function
\end{keyword}

\end{frontmatter}

\section{Introduction}

\paragraph{Background}

  Evolution Strategies (ES) are randomized search algorithms to minimize a black-box function $f:\R^N \to \R$ in continuous domain, where neither the gradient nor the Hessian matrix of the objective function is available. The most advanced and commonly used category of evolution strategies is covariance matrix adaptation evolution strategy (CMA-ES) \cite{Hansen2004ppsn,Hansen:2013vt}, which is recognized as the state-of-the-art black box continuous optimizer. It generates multiple \rev{candidate} solutions from a multivariate normal distribution. They are evaluated on the objective function. The distribution parameters such as the mean vector and the covariance matrix are updated by using the candidate solutions and their ranking information, where the objective function values are not directly used. Due to its population-based and comparison-based nature, the algorithm is invariant to any strictly increasing transformation of the objective function in addition to the invariance to scaling, translation, and rotation of the search space \cite{Hansen2000ppsn}. These invariance properties guarantee that the algorithm shows exactly the same behavior on a function $f$ and on its transformation $g \circ f \circ T$, where $g: \R \to \R$ is a strictly increasing function and $T: \R^N \to \R^N$ is a combination of scaling, translation and rotation defined as $T: \bm{x} \mapsto a \cdot \mathbf{U} (\bm{x} - \bm{b})$ with a positive real $a > 0$, an $N$ dimensional vector $\bm{b} \in \R^N$, and an $N$ dimensional orthogonal matrix $\mathbf{U}$. These invariance properties are the essence of the success of CMA-ES.

The performance evaluation of evolutionary algorithms is often based on empirical studies such as benchmarking on a test function suite \cite{Hansen2010geccobbobres,rios2013derivative} and well-considered performance assessment \cite{hansen2014assess,Krause2017foga}. It is easier to check the performance of an algorithm on a specific problem in simulation than to analyze it mathematically. The invariance properties of an algorithm then generalize the empirical result to a class of infinitely many functions defined by the invariance relation. On the other hand, theoretical studies often require simplification of algorithms and assumptions on the objective function, because of the difficulty of the analysis of advanced algorithms due to their comparison-based and population-based nature and the complex adaptation mechanisms. Nevertheless, theoretical studies lead us to a better understanding of algorithms and reveal the dependency of the performance on the interval parameter settings. For example, the recombination weights in CMA-ES are selected based on the mathematical analysis of an evolution strategy \cite{Arnold2005foga}\footnote{The weights of CMA-ES were set before the publication \cite{Arnold2005foga} because the theoretical result of optimal weights on the sphere was known before the publication.}. The theoretical result of the optimal step-size on the sphere function is used to design a box constraint handling technique \cite{Hansen2009tec} and to design a termination criterion for a restart strategy \cite{Yamaguchi2017BBOB}. A recent variant of CMA-ES \cite{Akimoto2016ppsn} exploits the theoretical result of the optimal rate of convergence of the step-size to estimate the condition number of the product of the covariance matrix and the Hessian matrix of the objective function.

\paragraph{Quality Gain Analysis}

\emph{Quality gain} and \emph{progress rate} analysis \cite{Rechenberg1994,Beyer1994ppsn,BeyerBOOK2001} measure the expected progress of the mean vector in one step. On one side, differently from convergence analysis (e.g., \cite{Auger2005tcs}), analyses based on these quantities do not guarantee the convergence and often take a limit to derive an explicit formula. Moreover, the step-size adaptation and the covariance matrix adaptation are not taken into account. On the other side, one can derive quantitative explicit estimates of these quantities, which is not the case in convergence analysis. The quantitative explicit formulas are particularly useful to know the dependency of the expected progress on the parameters of the algorithm such as the population size, number of parents, and recombination weights, which we may not recognize from empirical studies of algorithms. The above mentioned recombination weights in CMA-ES are derived from the quality gain analysis of evolution strategies \cite{Arnold2005foga}. 

Although the quality gain analysis is not meant to guarantee the convergence of the algorithm since it analyzes only a single step expected improvement, the progress rate is linked to the convergence rate of algorithms. It is directly related to the convergence rate of an ``artificial'' algorithm where the step-size is proportional to the distance to the optimum on the sphere function (see e.g., \cite{Auger2006gecco}). Moreover, the convergence rate of this artificial algorithm gives a bound on the convergence rate of algorithms that implement a proper step-size adaptation. For $(1+\lambda)$ or $(1,\lambda)$ ESs the bound holds on any function with a unique global optimum; that is, any step-size adaptive $(1 \stackrel{+}{,} \lambda)$-ES optimizing any function $f$ with a unique global optimum can not achieve a convergence rate faster than the convergence rate of the artificial algorithm on the sphere function where the step-size is the distance to the optimum times the optimal constant \cite{jebalia2008log,jebalia2010log,Auger2015}\footnote{More precisely, $(1 \stackrel{+}{,} \lambda)$-ES optimizing any function $f$ (that may have more than one global optimum) can not converge towards a given optimum $x^*$ faster in the search space than the artificial algorithm with step-size proportional to the distance to $x^*$.}. For algorithms implementing recombination, this bound still holds on spherical functions \cite{jebalia2010log,Auger2015}.

\paragraph{Related Work}

In this paper, we investigate ESs with weighted recombination on a general convex quadratic function. ESs with weighted recombination samples multiple candidate solutions at one time and compute the weighted average of the candidate solutions to update the distribution mean vector. Weighted recombination ESs are among the most important categories of ESs since the standard CMA-ES and most of the recent variants of CMA-ES \cite{Ros2008ppsn,Loshchilov2014,Akimoto2016gecco} employ weighted recombination. 

The first analysis of weighted recombination ESs was done in \cite{Arnold2005foga}, where the quality gain has been derived on the infinite dimensional sphere function $f: x \mapsto \norm{x}^2$. The optimal step-size and the optimal recombination weights are derived.
Reference~\cite{Arnold2007gecco} studied a variant of weighted recombination ESs called $(\mu/\mu_I, \lambda)$-ES\rev{, where $(\mu/\mu_I, \lambda)$ stands for intermediate recombination, where the recombination weights are equal for the best $\mu$ candidate solutions and zero for the other $\lambda - \mu$ candidate solutions. The analysis has been performed on the quadratic functions with the Hessian $\Hess = \frac12 \diag(\alpha, \dots, \alpha, 1, \dots, 1)$, where the number $\lfloor N \theta\rfloor$ of diagonal elements that are $\alpha > 1$ is controlled by the ratio $\theta$ of short axes}. Reference~\cite{Jagerskupper:2006cf} studied the $(1+1)$-ES with the one-fifth success rule on the same function and showed the convergence rate of $\Theta(1/(\alpha N))$. Reference~\cite{Finck2009foga} studied ES with weighted recombination on the same function. Their results, progress rate and quality gain, depend on the so-called localization parameter, the steady-state value of which is then analyzed to obtain the steady-state quality gain. References~\cite{beyer2014dynamics,Beyer2016ec} studied the progress rate and the quality gain of $(\mu/\mu_I, \lambda)$-ES on the general convex quadratic model. 

The quality gain analysis and the progress rate analysis in the above listed references rely on a geometric intuition of the algorithm in the infinite dimensional search space and on various approximations. On the other hand, the rigorous derivation of the progress rate (or convergence rate of the algorithm with step-size proportional to the \rev{distance to the} optimum) on the sphere function provided for instance in \cite{Auger2006gecco,Auger2015,abh2011b,jebalia:inria-00495401} only holds on spherical functions and provides solely a limit without a bound between the finite dimensional convergence rate and its asymptotic limit. \rev{The result of this paper is different in that we consider the general weighted recombination on the general convex quadratic objective and cover finite dimensional cases as well as the limit $N \to \infty$.}

\providecommand{\asy}{asymptotic normalized quality gain}
\providecommand{\nqglim}{normalized quality gain limit}
\paragraph{Contributions}

We study the weighted recombination ES on a general convex quadratic function $f(x) = \frac12(x - x^*)^\T \Hess (x - x^*)$ on the finite $N$ dimensional search space. We investigate the quality gain $\phi$, that is, the expectation of the relative function value decrease. We decompose $\phi$ as the product of two functions: $g$\new{, a function that} depends only on the mean vector of the sampling distribution and the Hessian $\Hess$, and $\bar{\phi}$, the so-called \emph{normalized quality gain} that depends essentially on all the \new{algorithm} parameters such as the recombination weights and the step-size. We approximate $\bar{\phi}$ by an analytically tractable function $\asynqg$. We call $\asynqg$ the \emph{\asy}. The main contributions are summarized as follows.

First, we derive the error bound between $\bar{\phi}$ and $\asynqg$ for finite dimension $N$. To the best of our knowledge, this is the first work that performs the quality gain analysis for finite $N$ and provides \new{an} error bound. The asymptotic normalized quality gain and the bounds in this paper are improved over the previous work \cite{Akimoto2017foga}. Thanks to the explicit error bound derived in the paper, we can treat the population size $\lambda$ increasing with $N$ and provide (for instance) a rigorous sufficient condition on the dependency between $\lambda$ and $N$ such that the per-iteration quality gain scal\new{es with} $O(\lambda/N)$ for algorithms with intermediate recombination \cite{BeyerBOOK2001}.

Second, we show that the error bound between $\bar{\phi}$ and $\asynqg$ converges to zero as the learning rate $\cm$ for the mean vector update tends to infinity. We derive the optimal step-size and the optimal recombination weights for $\asynqg$, revealing the dependencies of these optimal parameters on $\lambda$ and $N$. In contrast, the previous works of quality gain analysis mentioned above take the limit $N \to \infty$ while $\lambda$ is fixed, \new{hence assuming} $\lambda \ll N$. Therefore, they do not reveal the dependencies of $\bar{\phi}$ and the optimal parameters on $\lambda$ when $\lambda \not\ll N$. We validate in experiments that the optimal step-size derived for $\cm \to \infty$ provides a reasonable estimate of the optimal step-size even for $\cm = 1$.

Third, we prove that $\asynqg$ converges toward $\bar{\phi}_{\infty}$ as $N \to \infty$ under the condition $\lim_{N\to\infty}\Tr(\Hess^2) / \Tr(\Hess)^2 = 0$, where $\bar{\phi}_{\infty}$ is the limit of $\bar{\phi}$ on the sphere function for $N \to \infty$ derived in \cite{Arnold2005foga}. \rev{The condition $\lim_{N\to\infty}\Tr(\Hess^2) / \Tr(\Hess)^2$ holds, for example, for positive definite $\Hess$ with bounded eigenvalues. It also holds for some positive semi-definite $\Hess$ and for some positive definite $\Hess$ with unbounded eigenvalues, for example with eigenvalues in $[1, \sqrt N]$.}\todo{please verify} The result implies that the optimal recombination weights are independent of $\Hess$, whereas the optimal step-size heavily depends on $\Hess$ and the distribution mean. This part of the contribution is a generalization of the previous foundation in \cite{beyer2014dynamics,Beyer2016ec}, but the proof methodology is rather different. Furthermore, the error bound between $\bar{\phi}$ and $\asynqg$ derived in this paper allows us to further investigate how fast $\asynqg$ converges toward $\bar{\phi}_\infty$ as $N \to \infty$, depending on the eigenvalue distribution of $\Hess$. 

\paragraph{Organization} This paper is organized as follows. In Section~\ref{sec:form}, we formally define the evolution strategy with weighted recombination. The quality gain analysis on the infinite dimensional sphere function is revisited. In Section~\ref{sec:quad}, we derive the quality gain bound for a finite dimensional convex quadratic function. In Section~\ref{sec:cons}, important consequences of the quality gain bound are discussed. In Section~\ref{sec:conc}, we conclude our paper. Properties of the normal order statistics that are important to understand our results are summarized in \ref{apdx:nos} and the detailed proofs of lemmas are provided in \ref{apdx:proofs}.

\paragraph{Notation} We apply the following mathematical notations throughout the paper. For integers $n$, $m \in \mathbb{N}$ such that $n \leq m$, we denote the set of integers between $n$ and $m$ (including $n$ and $m$) by $\llbracket n, m \rrbracket$. Binomial coefficients are denoted as $\binom{m}{n} = \frac{m!}{(m-n)!n!}$. For real numbers $a$, $b \in \R$ such that $a \leq b$, the open and the closed intervals are denoted as $(a, b)$ and $[a, b]$, respectively. For an $N$-dimensional real vector $x \in \R^N$, let $[x]_i$ denote the $i$-th  coordinate of $x$.
\rev{A sequence of length $n$ is denoted as $(x_i)_{i=1}^{n} = (x_1, \cdots, x_n)$, or just as $(x_i)$, and an infinite sequence is denoted as $(x_i)_{i=1}^\infty$.}
For $x \in \R$, the absolute value of $x$ is denoted by $\abs{x}$. For $x \in \R^N$, the Euclidean norm is denoted by $\norm{x} = \big(\sum_{i=1}^{N}[x]_i^2\big)^\frac12$. Let $\ind{\texttt{condition}}$ be the indicator function which is $1$ if \texttt{condition} is true and $0$ otherwise. Let $\Phi$ be the cumulative density function (c.d.f.) deduced by the (one-dimensional) standard normal distribution $\mathcal{N}(0, 1)$.  Let $\NN_{i:\lambda}$ be the $i$-th smallest random variable among $\lambda$ independently and standard normally distributed random variables, i.e., $\NN_{1:\lambda} \leq \cdots \leq \NN_{\lambda:\lambda}$. The expectation of a random variable (or vector) $X$ is denoted as $\E[X]$. The conditional expectation of $X$ given $Y$ is denoted as $\E[X \mid Y]$. For a function $f$ of $n$ random variables $(X_i)_{i=1}^{n}$, the conditional expectation of $F = f(X_1,\dots,X_n)$ given $X_k$ for some $k \in \llbracket 1, n \rrbracket$ is denoted as $\E_{k}[F] = \E[F \mid X_k]$. Similarly, the conditional expectation of $F$ given $X_k$ and $X_l$ for different $k, l \in \llbracket 1, n \rrbracket$ is denoted as $\E_{k,l}[F] = \E[F \mid X_k, X_l]$. 

\section{Formulation}
\label{sec:form}

\subsection{Evolution Strategy with Weighted Recombination}

We consider an evolution strategy with weighted recombination. At each iteration $t \geq 0$, it draws $\lambda$ independent random vectors $Z_1, \dots, Z_\lambda$ from the $N$-dimensional standard normal distribution $\mathcal{N}(\bm{0}, \eye)$, where $\bm{0} \in \R^N$ is the zero vector and $\eye$ is the identity matrix of dimension $N$. The candidate solutions $X_1, \dots, X_\lambda \sim \mathcal{N}(\xmean^{(t)}, (\sigma^{(t)})^2\eye)$ are computed as $X_i = \xmean^{(t)} + \sigma^{(t)} Z_i$, where $\xmean^{(t)} \in \R^N$ is the mean vector and $\sigma^{(t)} > 0$ is the standard deviation, also called the step-size or the mutation strength. The \rev{candidate} solutions are evaluated on a given objective function $f:\R^N \to \R$. Without loss of generality (w.l.o.g.), we assume $f$ to be minimized. Let $i:\lambda$ be the index of the $i$-th best \rev{candidate} solution among $X_1, \dots, X_\lambda$, i.e., $f(X_{1:\lambda}) \leq \cdots \leq f(X_{\lambda:\lambda})$, and $w_1 \geq \cdots \geq w_\lambda$ be the real-valued recombination weights. W.l.o.g., we assume $\sum_{i=1}^{\lambda} \abs{w_i} = 1$. Let $\muw = 1 / \sum_{i=1}^{\lambda} w_i^2$ denote the so-called effective variance selection mass. The mean vector is updated according to 
\begin{equation}
\xmean^{(t+1)} = \xmean^{(t)} + \cm \sum_{i=1}^{\lambda} w_i (X_{i:\lambda} - \xmean^{(t)}) \enspace,
\label{eq:m}
\end{equation}%
where $\cm > 0$ is the learning rate of the mean vector update. 

\rev{In this paper we reformulate \eqref{eq:m} to investigate the algorithm with mathematical rigor. Hereunder, we write the candidate solutions, $X_1, \dots, X_\lambda$, and the corresponding random vectors, $Z_1, \dots, Z_\lambda$, as sequences $(X_i)_{i=1}^{\lambda}$ and $(Z_i)_{i=1}^{\lambda}$ for short.} First, we introduce the weight function
\begin{align}
  W(i; (X_k)_{k=1}^{\lambda}) &:= \sum_{k=1 + l_i}^{u_i} \frac{w_k}{u_i - l_i} \enspace, &&
  \text{where} 
  &\begin{cases}
    l_i = \sum_{j=1}^{\lambda} \ind{f(X_j) < f(X_i)}  \\
    u_i = \sum_{j=1}^{\lambda} \ind{f(X_j) \leq f(X_i)} 
  \end{cases},
\label{eq:w}
\end{align}%
i.e., $l_i$ and $u_i$ are the numbers of strictly and weakly better \rev{candidate} solutions than $X_i$, respectively. The weight value for $X_i$ is the arithmetic average of the weights $w_k$ for the tie \rev{candidate} solutions. In other words, all the tie \rev{candidate} solutions have the same weight values. If there is no tie, the weight value for the $i$-th best \rev{candidate} solution $X_{i:\lambda}$ is simply $w_i$. In the following, we drop the subscripts and the superscripts for sequences unless they are unclear from the context and write simply as $(X_k) = (X_k)_{k=1}^{\lambda}$. With the weight function, we rewrite the mean vector update \eqref{eq:m} as
\begin{equation}
\xmean^{(t+1)} = \xmean^{(t)} + \cm \sum_{i=1}^{\lambda} W(i; (X_k))(X_{i} - \xmean^{(t)}) \enspace\rev{.}
\label{eq:mmz}
\end{equation}%
The above update \eqref{eq:mmz} is equivalent with the original update \eqref{eq:m} if there is no tie among $\lambda$ \rev{candidate} solutions. If the objective function is a convex quadratic function, there will be no tie with probability one\del{ (w.p.1)}. Therefore, they are equivalent \rev{with probability one}. \rev{Algorithm~\ref{alg:es} summarizes the single step of the algorithm, where we rewrite \eqref{eq:mmz} by using $X_{i} - \xmean^{(t)} = \sigma^{(t)} Z_i$.}

\rev{The above formulation is motivated twofold.} One is to well define the update even when there is a tie. \rev{In our formulation, tie candidate solutions receive the equal recombination weights.} The other is a technical reason. In \eqref{eq:m} the already sorted \rev{candidate} solutions $X_{i:\lambda}$ are all correlated and they are not anymore normally distributed. However, they are assumed to be normally distributed in the previous work \cite{Arnold2005foga,beyer2014dynamics,Beyer2016ec}. To ensure that such an approximation leads to the asymptotically true quality gain limit, a mathematically involved analysis has to be done. See \cite{Auger2006gecco,abh2011b,jebalia:inria-00495401} for details. In \eqref{eq:mmz}, the weight function explicitly includes the ranking computation and $X_i$ are still independent and normally distributed. This allows us \rev{to derive the quality gain on a convex quadratic function rigorously.}

\begin{algorithm}[t]
  \caption{Single step of the weighted recombination ES solving $f$}
  \label{alg:es}
  \begin{algorithmic}[1] 
    \Procedure{ES}{$\xmean, \sigma, (w_k)_{k=1}^{\lambda}, \cm$}
    \For{$i=1,\dots,\lambda$} \Comment{Generate and evaluate $\lambda$ candidate solutions}
    \State $Z_i \sim \mathcal{N}(\bm{0}, \eye)$
    \State $X_i = \xmean + \sigma Z_i$
    \State Evaluate $f(X_i)$
    \EndFor 
    \State $W(i; (X_k)_{k=1}^{\lambda}) = \sum_{k=1 + l_i}^{u_i} w_k / (u_i - l_i)$ \Comment{Compute the weights with \eqref{eq:w}}
    \State $\xmean \gets \xmean + \cm \sigma \sum_{i=1}^{\lambda} W(i; (X_k)_{k=1}^{\lambda}) Z_{i}$ \Comment{Update the mean with \eqref{eq:mmz}}
    \State \textbf{return} $\xmean$
    \EndProcedure
  \end{algorithmic}
\end{algorithm}

\subsection{Quality Gain Analysis on the Spherical Function}

The quality gain is defined as the expectation of the relative decrease of the function value. Formally, it is the conditional expectation of the relative decrease of the function value conditioned on the mean vector $\xmean^{(t)} = \xmean$ and the step-size $\sigma^{(t)} = \sigma$\rev{, defined as follows.}
\rev{\begin{definition}
  The \emph{quality gain} of Algorithm~\ref{alg:es} given $\xmean^{(t)} = \xmean$ and $\sigma^{(t)} = \sigma$ is
\begin{equation}
\phi(\xmean, \sigma) = \frac{\E[f(\xmean) - f(\textsc{ES}(\xmean, \sigma, (w_k)_{k=1}^{\lambda}, \cm))]}{f(\xmean)  - f(x^*)} \enspace,
\label{eq:sph:qg}
\end{equation}%
where $x^* \in \R^N$ is (one of) the global minimum point of $f$. Note that the quality gain depends also on $(w_k)_{k=1}^{\lambda}$, $\cm$, and the dimension $N$.
\end{definition}}%

\paragraph{Results}
\rev{Algorithm~\ref{alg:es}} solving a spherical function $f(x) = \norm{x}^2$ is analyzed in \cite{Arnold2005foga}. For this purpose, the \emph{normalized step-size} and the \emph{normalized quality gain} are introduced
as 
  \begin{align}
  \ns &= \sigma \,\frac{\cm N}{\norm{\xmean}} &&\text{and} & \bar{\phi}(\xmean, \ns) &= \frac{N}{2} \phi\left(\xmean, \sigma = \frac{\ns\norm{\xmean}}{\cm N}\right) \enspace,
\label{eq:sph:ns}
  \end{align}%
  respectively. \rev{This normalization of the step-size suggests that $\sigma$ is proportional to $\norm{\xmean}$ and inverse proportional to $\cm$ and to $N$. The normalized step-size $\ns$ is proportional to the ratio between the actual step-size and the distance between the current mean and the optimal solution. This reflects the scale invariance of the algorithm on the sphere function, that is, the single step response is solely determined by the normalized step-size. The dimension $N$ in the numerator implies that the step-size $\sigma$ needs to be inversely proportional to $N$. The normalized quality gain $\bar\phi$ is simply the quality gain given $\ns$ scaled by $N / 2$. The scaling by $N$ reflects that the convergence speed can not exceed $O(1/N)$ for any comparison based algorithm \cite{Teytaud2006ppsn}.} By taking $N \to \infty$, the normalized quality gain converges \rev{pointwise (w.r.t.~$\ns$)} to 
\begin{align}
    \bar{\phi}_{\infty}(\ns, (w_k))
    &:= 
    \rev{\lim_{N\to\infty}\bar{\phi}(\xmean, \ns)} 
    = - \ns \sum_{i=1}^{\lambda} w_i \E[\NN_{i:\lambda}] - \frac{\ns^2}{2\muw} \nonumber\\
    &= \frac{\muw}{2}\left(\sum_{i=1}^{\lambda} w_i \E[\NN_{i:\lambda}]\right)^2\Bigg(1 -  \bigg(\frac{\ns}{\ns^*((w_k))} - 1\bigg)^2\Bigg)
\enspace,
\label{eq:nqlim}
\end{align}%
where $\ns^*((w_k))$ denotes the normalized step-size $\ns$ optimizing $\bar{\phi}_{\infty}$ given $(w_k)$ and is given by 
\begin{equation}
\ns^*((w_k)) = - \muw \sum_{i=1}^{\lambda} w_{i}\E[\mathcal{N}_{i:\lambda}] \enspace.
\label{eq:optns}
\end{equation}%
A formal proof of this result is presented in \rev{Theorem~2 of \cite{abh2011b} relying on the uniform integrability of some random variable proved in \cite{jebalia:inria-00495401}}. 

Consider the optimal recombination weights that maximize $\bar{\phi}_{\infty}$ in \eqref{eq:nqlim}. The optimal recombination weights are given independently of $\ns$ by
\begin{equation}
    w_k^* = - \frac{\E[\mathcal{N}_{k:\lambda}]}{\sum_{i=1}^{\lambda} \abs{\E[\mathcal{N}_{i:\lambda}]}}
\label{eq:optw}
\end{equation}%
and $\bar{\phi}_{\infty}$ is written as 
\begin{equation}
    \bar{\phi}_{\infty}(\ns, (w_k^*)) = \frac{\sum_{i=1}^{\lambda} \E[\mathcal{N}_{i:\lambda}]^2}{2}\left( 1  - \left( \frac{\ns}{\sum_{i=1}^{\lambda} \abs{\E[\mathcal{N}_{i:\lambda}]}} - 1 \right)^2\right) \enspace.
\label{eq:optw-nqg}
\end{equation}%
Note that $\ns^*((w_k^*)) = \sum_{i=1}^\lambda \abs{\E[\mathcal{N}_{i:\lambda}]}$. 
Given $\ns^*$ and $(w_k^*)$, we achieve the maximal value of $\bar{\phi}_{\infty}$ that is $\bar{\phi}_{\infty}(\ns^*((w_k^*)), (w_k^*)) = \sum_{i=1}^{\lambda} \E[\mathcal{N}_{i:\lambda}]^2 / 2$\del{ and is roughly estimated by $\lambda / 2$ if $\lambda$ is sufficiently large, say $\lambda \geq 10^2$. Therefore, $\phi$ is approximated by $\lambda / N$}.

\begin{figure}[t]
\centering
\includegraphics[width=0.65\linewidth]{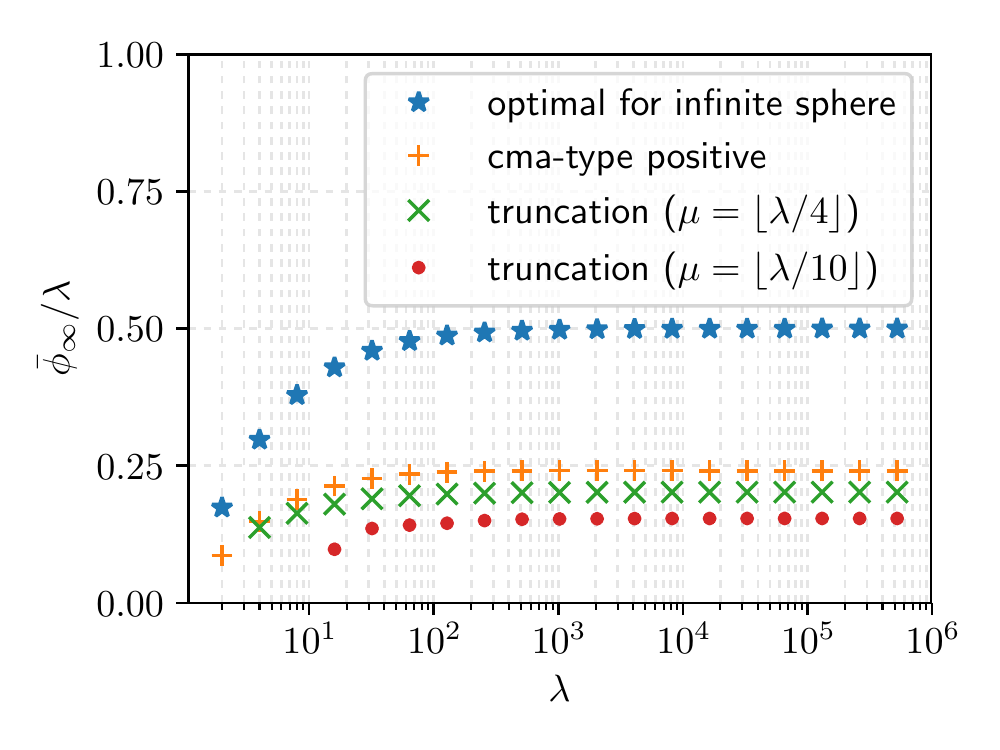}
\caption{The normalized quality gain limit $\bar{\phi}_{\infty}(\ns^*((w_k)), (w_k))$ divided by $\lambda$. Four different weight schemes are employed: the optimal weights ($w_k \propto - \E[\NN_{k:\lambda}]$), the weights used in the CMA-ES ($w_k \propto \max(\ln\big(\frac{\lambda+1}{2}\big) - \ln\big(k\big), 0)$), and the truncation weights ($w_k = 1/\mu$ for $k = 1, \dots, \mu$ and $w_k = 0$ for $k = \mu + 1,\dots,\lambda$) with $\mu = \lfloor\lambda / 4\rfloor$ and $\mu = \lfloor\lambda / 10\rfloor$. All the weights are scaled so that $\sum_{k=1}^{\lambda}\abs{w_k} = 1$. The value of $\E[\mathcal{N}_{i:\lambda}]$ is approximated by the Blom's formula (see \ref{apdx:nos}).}
\label{fig:inf-optsig-lam}
\end{figure}

\paragraph{Remarks}

The optimal normalized step-size \eqref{eq:optns} and the normalized quality gain \eqref{eq:nqlim} given $\ns^*$ depend on $(w_k)$. Particularly, they are proportional to $\muw$. For instance, \rev{%
  under the optimal weights \eqref{eq:optw}, we have $\rev{-}\sum_{i=1}^{\lambda} w_{i}\E[\mathcal{N}_{i:\lambda}] = \sum_{i=1}^{\lambda}\E[\mathcal{N}_{i:\lambda}]^2 / \sum_{i=1}^{\lambda}\abs{\E[\mathcal{N}_{i:\lambda}]} \approx (\pi / 2)^{1/2}$ for a sufficiently large $\lambda$\footnote{We used the facts $\lim_{\lambda\to\infty} \sum_{i=1}^{\lambda} \E[\mathcal{N}_{i:\lambda}]^2 / \lambda = 1$ and $\lim_{\lambda\to\infty} \sum_{i=1}^{\lambda} \abs{\E[\mathcal{N}_{i:\lambda}]} / \lambda = (2/\pi)^{1/2}$. See \ref{apdx:nos} for details.}. Then, from \eqref{eq:nqlim} and \eqref{eq:optns} we know $\ns \propto \muw$ and $\bar\phi_{\infty} \propto \muw$. Moreover, using the relation $\muw = \big(\sum_{i=1}^{\lambda}\abs{\E[\mathcal{N}_{i:\lambda}]}\big)^2 / \sum_{i=1}^{\lambda}\E[\mathcal{N}_{i:\lambda}]^2  \approx (2  / \pi) \lambda$, we can reword it as that the optimal step-size and the normalized quality gain given $\ns^*$ are proportional to $\lambda$.} %
Figure~\ref{fig:inf-optsig-lam} shows how $\bar{\phi}_\infty / \lambda$ scales with $\lambda$ when the optimal step-size $\sigma = \ns^*((w_k)) \norm{\xmean} / (\cm N)$ is set. This shows that the normalized quality gain, and hence the optimal normalized step-size, are proportional to $\lambda$ for standard weight schemes. When the optimal weights are used, $\bar{\phi}_\infty / \lambda$ goes up to $0.5$ as $\lambda$ increases. On the other hand, nonnegative weights can not achieve the value of $\bar{\phi}_\infty / \lambda$ above $0.25$. \rev{The CMA type weights are designed to approximate the optimal nonnegative weights, where the first half weights are proportional to the optimal setting and the last half are zero. The truncation weights result in a smaller normalized quality gain. It is shown in \cite{BeyerBOOK2001} that the truncation weights achieve $\bar{\phi}_\infty \in O(\mu \log(\lambda/\mu))$.}

The normalized quality gain limit $\bar{\phi}_\infty$ depends only on the normalized step-size $\ns$ and the weights $(w_k)$. Since the normalized step-size does not change if we multiply $\cm$ by some factor and divide $\sigma$ by the same factor, $\cm$ does not have any impact on $\bar{\phi}_\infty$, hence on the quality gain $\phi$. This is unintuitive and is not true in a finite dimensional space. The step-size $\sigma$ realizes the standard deviation of the sampling distribution and it has an impact on the ranking of the \rev{candidate} solutions. On the other hand, the product $\sigma \cm$ is the step-size of the $\xmean$-update that depends on the ranking of the \rev{candidate} solutions. The normalized quality gain limit provided above tells us that the ranking of the \rev{candidate} solutions is independent of $\ns$ in the infinite dimensional space. We will discuss \new{this further}\del{it} in Section~\ref{sec:cons}.

The quality gain is to measure the improvement in one iteration. If we generate and evaluate $\lambda$ \rev{candidate} solutions every iteration, the quality gain per evaluation ($f$-call) is $1/\lambda$ times smaller, i.e., the quality gain per evaluation is $1/N$, rather than $\lambda/N$. It implies that the number of iterations \rev{to achieve the same amount of the quality gain} is inversely proportional to $\lambda$. \new{This is the best we can hope for}\del{It ideal} when the algorithm is implemented on a parallel computer. However, since the above result is obtained in the limit $N\to\infty$ while $\lambda$ is fixed, it is implicitly assumed that $\lambda \ll N$. \new{The optimal down scaling of the number of iterations indeed only holds}\del{Therefore, the ideal scaling down of the number of iterations holds only} for $\lambda \ll N$. In practice, the quality gain per iteration tends to level out as $\lambda$ increases. We will revisit this point in Section~\ref{sec:cons} and see how the optimal values for $\ns$ and $\bar{\phi}$ depend on $N$ and $\lambda$ when both are finite.

\section{Quality Gain Analysis on General Quadratic Functions}
\label{sec:quad}

In this section we investigate the normalized quality gain of \rev{Algorithm~\ref{alg:es}} minimizing a quadratic function with its Hessian $\nabla \nabla f(x) = \Hess$ assumed to be nonnegative definite and symmetric, i.e.,
\begin{equation}
f(x) = \frac12(x - x^*)^\T \Hess (x - x^*) \enspace,
\label{eq:quad}
\end{equation}%
where $x^* \in \R^N$ is the global optimal solution\footnote{We use the following terminology in this paper. A nonnegative definite matrix $\mathbf{A}$ is a matrix having only nonnegative eigenvalues, i.e., $x^\T \Hess x \geq 0$ for all $x \in \R^N$. \rev{A nonnegative definite matrix $\mathbf{A}$ is called positive definite if $x^\T \Hess x > 0$ for all $x \in \R^N\setminus\{\bm{0}\}$, otherwise it is called positive semi-definite.} If $\Hess$ is positive semi-definite, the optimum $x^*$ is not unique.}. \nnew{W.l.o.g., we assume $\Tr(\Hesst) = 1$\footnote{None of the algorithmic components and the quality measures used in the paper are affected by multiplying a positive constant to $\Hess$, or equivalently to $f$. To consider a general $\Hess$, simply replace $\Hess$ with $\Hess / \Tr(\Hess)$ in the following of the paper.}.} For the sake of notation simplicity we denote the directional vector of the gradient of $f$ at $\xmean$ by $\ee = \frac{\nabla f(\xmean)}{\norm{\nabla f(\xmean)}} = \frac{\Hess (\xmean - x^*)}{\norm{\Hess (\xmean - x^*)}}$. To make the dependency of $\ee$ on $\xmean$ clear, we sometimes write it as $\ee_{\xmean}$. 

\subsection{Normalized Quality Gain and Normalized Step-Size}

We introduce the normalized step-size and the normalized quality gain. First of all, if the objective function is homogeneous around the optimal solution $x^*$, the optimal step-size must be a homogeneous function of degree $1$ with respect to $\xmean - x^*$. This is formally stated in the following proposition. \rev{The proof is found in \ref{apdx:prop:optsig}.}

\begin{proposition}\label{prop:optsig}
Let $f:\R^N\to\R$ be a homogeneous function of degree $n$, i.e., $f(\alpha \cdot x) = \alpha^n f(x)$ for a fixed integer $n > 0$ for any $\alpha > 0$ and any $x \in \R^N$. Consider \rev{Algorithm~\ref{alg:es}} minimizing a function $f^*: x \mapsto f(x - x^*)$. Then, the quality gain is scale-invariant, i.e., $\phi(x^* + (\xmean - x^*), \sigma) = \phi(x^* + \alpha (\xmean - x^*), \alpha \sigma)$ for any $\alpha > 0$. Moreover, the optimal step-size $\sigma^* = \argmax_{\sigma \geq 0} \phi(\xmean, \sigma)$, if it is well-defined, is a function of $\xmean - x^*$. For the sake of simplicity we write the optimal step-size as a map $\sigma^*: \xmean - x^* \mapsto \sigma^*(\xmean - x^*)$. It is a homogeneous function of degree $1$, i.e., $\sigma^*(\alpha\cdot(\xmean - x^*)) = \alpha \sigma^*(\xmean - x^*)$ for any $\alpha > 0$.\end{proposition}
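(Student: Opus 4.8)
The plan is to prove everything by a single coupling argument that runs Algorithm~\ref{alg:es} simultaneously at the two scales using the \emph{same} standard normal vectors, and then to exploit the fact that degree-$n$ homogeneity rescales every function value by the \emph{positive} factor $\alpha^n$, which leaves the ranking of the candidate solutions—and hence the weights computed by $W$ in \eqref{eq:w}—untouched. First I would record the trivial but necessary fact that $f(\bm{0}) = 0$: from $f(\alpha\cdot\bm{0}) = \alpha^n f(\bm{0})$ with $n>0$ and any $\alpha\neq 1$ we get $f(\bm{0})=0$, so $f^*(x^*) = f(\bm{0}) = 0$ and the denominator of the quality gain \eqref{eq:sph:qg} for $f^*$ equals $f(\xmean - x^*)$. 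I would then set $\bm{y} = \xmean - x^*$ and consider scenario A with mean $\xmean_A = x^* + \bm{y}$ and step-size $\sigma$, and scenario B with mean $\xmean_B = x^* + \alpha\bm{y}$ and step-size $\alpha\sigma$, both driven by the same draws $Z_1,\dots,Z_\lambda$.

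The core computation is that in scenario A the offsets from the optimum are $X_i^A - x^* = \bm{y} + \sigma Z_i$, while in scenario B they are $X_i^B - x^* = \alpha\bm{y} + \alpha\sigma Z_i = \alpha(X_i^A - x^*)$. Homogeneity then gives $f^*(X_i^B) = f\big(\alpha (X_i^A - x^*)\big) = \alpha^n f^*(X_i^A)$. Since $\alpha^n > 0$, the two scenarios induce the identical ordering of the $\lambda$ candidate solutions, so the weight values coincide, $W(i;(X_k^B)) = W(i;(X_k^A))$. Propagating this through the mean update in Algorithm~\ref{alg:es} (line~8), and noting the extra factor $\alpha$ in the step-size of scenario B, yields $M_A' - x^* = \bm{y} + \cm\sigma\sum_i W(i;(X_k^A)) Z_i$ and $M_B' - x^* = \alpha\big(M_A' - x^*\big)$, where $M_A', M_B'$ denote the returned means.

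With these identities the scale-invariance claim is immediate: in the quality-gain ratio for $f^*$, both the numerator $f^*(\xmean_B) - f^*(M_B') = \alpha^n\big(f^*(\xmean_A) - f^*(M_A')\big)$ and the denominator $f^*(\xmean_B) = \alpha^n f^*(\xmean_A)$ pick up the same factor $\alpha^n$, which cancels, giving $\phi(x^* + \bm{y}, \sigma) = \phi(x^* + \alpha\bm{y}, \alpha\sigma)$. Because the whole computation shows $\phi$ depends on $\xmean$ only through $\bm{y} = \xmean - x^*$, the optimal step-size is well-defined as a map of $\bm{y}$. For the degree-$1$ homogeneity I would rewrite scale-invariance (substituting $\sigma\mapsto\sigma'/\alpha$) as $\phi(x^* + \alpha\bm{y}, \sigma') = \phi(x^* + \bm{y}, \sigma'/\alpha)$, maximize both sides over $\sigma'\geq 0$, and change variables $\tilde\sigma = \sigma'/\alpha$ inside the $\argmax$; this produces $\sigma^*(\alpha\bm{y}) = \alpha\,\sigma^*(\bm{y})$.

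The only genuinely delicate point—and the one I would spell out most carefully—is the invariance of the weight function under multiplication of all function values by the positive constant $\alpha^n$. This rests on the comparison-based form of \eqref{eq:w}: the counts $l_i,u_i$ depend on $f^*$ only through the comparisons $f^*(X_j)<f^*(X_i)$ and $f^*(X_j)\leq f^*(X_i)$, which are preserved by any strictly increasing rescaling, ties included. Everything else is routine bookkeeping with the homogeneity relation, so I expect no further obstacle.
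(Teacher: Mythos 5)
Your proof is correct and follows essentially the same route as the paper's: couple the two scales through the same standard normal vectors, use degree-$n$ homogeneity to pull out the factor $\alpha^n$ from both numerator and denominator of the quality gain, and conclude the degree-$1$ homogeneity of $\sigma^*$ by a change of variables in the $\argmax$. The only difference is one of care, not substance: you make explicit two points the paper leaves implicit, namely that $f(\bm{0})=0$ (so the denominator is $f(\xmean - x^*)$) and that the weights $W(i;(X_k))$ are identical in both scenarios because positive rescaling of the function values preserves the ranking and the ties, which is exactly what justifies the paper keeping its random vector $\Delta$ fixed across its chain of equalities.
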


Note that the quadratic function is homogeneous of degree $2$, and the function $\xmean \mapsto \norm{\nabla f(\xmean)} = \norm{\Hess (\xmean - x^*)}$ is homogeneous of degree $1$ around $x^*$. 
\new{The latter}\del{It} is our candidate for the optimal step-size. We define the normalized step-size, the scale-invariant step-size, and the normalized quality gain for a quadratic function as follows.\del{{\small\begin{equation}
\ns = \frac{\sigma \cm \Tr(\Hess)}{\norm{\nabla f(\xmean)}}
\enspace.
\label{eq:nsquad}
\end{equation}}%
In other words, the step-size is determined by
{\small\begin{equation}
\sigma = \frac{\ns \norm{\nabla f(\xmean)} }{ \cm \Tr(\Hess) } \enspace. 
\label{eq:optsig}
\end{equation}}}{}

\del{\begin{definition}\del{[Scale-Invariant Step-Size on a Quadratic Function]}
\label{def:ns}
For a quadratic function \eqref{eq:quad}, the \emph{normalized step-size} $\ns > 0$ is defined as
{\small\begin{equation}
\ns = \frac{\sigma \cm}{\norm{\nabla f(\xmean)}}
\enspace.
\label{eq:nsquad}
\end{equation}}%
In other words, \del{given the normalized step-size $\ns$, }the step-size is given by
{\small\begin{equation}
\sigma = \frac{\ns \norm{\nabla f(\xmean)} }{ \cm } \enspace. 
\label{eq:optsig}
\end{equation}}%
We call it the \emph{scale-invariant step-size} for a quadratic function \eqref{eq:quad}.
\end{definition}}{}

\begin{definition}
\label{def:ns}
For a convex quadratic function \eqref{eq:quad}, the \emph{normalized step-size} $\ns$ and the \emph{scale-invariant step-size} \rev{$\sigma$} given $\ns$ are defined as $\ns = (\sigma \cm) / \norm{\nabla f(\xmean)}$ and $\sigma = (\ns / \cm) \norm{\nabla f(\xmean)}$.
\del{
{\small\begin{equation}
    \ns = \frac{\sigma \cm }{\norm{\nabla f(\xmean)}}
    \enspace,
    \quad
\sigma = \frac{\ns \norm{\nabla f(\xmean)} }{ \cm } \enspace. 
\label{eq:optsig}
\end{equation}}%
}%
\end{definition}

\del{The normalized quality gain of weighted recombination ES with scale-invariant step-size on a quadratic function is then defined as follows.}
\begin{definition}
\label{def:nqg}
Let $g:\R^N \to \R$ be the $\xmean$-dependent scaling factor of the normalized quality gain defined as $g(\xmean) = \norm{\nabla f(\xmean)}^2 / f(\xmean)$. 
\del{by
{\small\begin{equation}
g(\xmean) = \frac{\norm{\nabla f(\xmean)}^2}{f(\xmean)} \enspace.
\label{eq:g}
\end{equation}}%
}%
The \emph{normalized quality gain} for a quadratic function is defined as $\bar{\phi}(\xmean, \ns) = 
\phi(\xmean, \sigma = \ns\norm{\nabla f(\xmean)} / \cm ) / g(\xmean)$.
\del{{\small\begin{equation}
\bar{\phi}(\xmean, \ns) = 
\frac{\phi(\xmean, \sigma = \ns\norm{\nabla f(\xmean)} / \cm )}{g(\xmean)} \enspace.
\label{eq:nqgquad}
\end{equation}}%
}%
\end{definition}

\del{In other words, $\phi\big(\xmean, \sigma = \ns\norm{\nabla f(\xmean)} / \cm \big) = g(\xmean)\bar{\phi}(\xmean, \ns)$. }%
Note that the normalized step-size and the normalized quality gain defined above coincide with \eqref{eq:sph:ns}\del{ and \eqref{eq:sph:nqg}, respectively,} if $f(x) = \norm{x}^2 / (2N)$, where $\Hess = \eye / N$, $\nabla f(\xmean) = \xmean / N$ and $g(\xmean) = 2 / N$. Moreover, they are equivalent to Eq.~(4.104) in \cite{BeyerBOOK2001} introduced to analyze the $(1+\lambda)$-ES and the $(1, \lambda)$-ES. The same normalized step-size has been used for $(\mu/\mu_I, \lambda)$-ES \cite{beyer2014dynamics,Beyer2016ec}. See Section~4.3.1 of \cite{BeyerBOOK2001} for the motivation of these normalization.

\paragraph*{Non-Isotropic Gaussian Sampling} Throughout the paper, we assume that the  multivariate normal sampling distributions have an isotropic covariance matri\rev{x}. We can generalize all the following results to an arbitrary positive definite symmetric covariance matrix $\mathbf{C}$ by considering a linear transformation of the search space. Indeed, let $f: x \mapsto \frac12 (x - x^*)^\T \Hess (x - x^*)$, and consider the coordinate transformation $x \mapsto y = \mathbf{C}^{-\frac12}x$. In the latter coordinate system the function $f$ can be written as $f(x) = \bar{f}(y) = \frac12 (y - \mathbf{C}^{-\frac12} x^*)^\T (\mathbf{C}^{\frac12}\Hess\mathbf{C}^{\frac12}) (y - \mathbf{C}^{-\frac12} x^*)$. The multivariate normal distribution $\mathcal{N}(\xmean, \sigma^2\mathbf{C})$ is transformed into $\mathcal{N}(\mathbf{C}^{-\frac12} \xmean, \sigma^2 \eye)$ by the same transformation. Then, it is easy to prove that the quality gain on the function $f$ given the parameter $(\xmean, \sigma, \mathbf{C})$ is equivalent to the quality gain on the function $\bar{f}$ given $(\mathbf{C}^{-\frac12} \xmean, \sigma, \eye)$. The normalization factor $g(\xmean)$ of the quality gain and the normalized step-size are then rewritten as
\begin{align*}
g(\xmean) &= \frac{\norm{\mathbf{C}^{\frac12}\Hess(\xmean - x^*)}^2}{f(\xmean)\Tr(\mathbf{C}^{\frac12}\Hess\mathbf{C}^{\frac12})} \enspace,
&
\ns &= \frac{\sigma \cm \Tr(\mathbf{C}^{\frac12}\Hess\mathbf{C}^{\frac12})}{\norm{\mathbf{C}^{\frac12}\Hess(\xmean - x^*)}}
\enspace.
\end{align*}%

\subsection{Conditional Expectation of the Weight Function}

\rev{The quadratic objective \eqref{eq:quad} can be written as
\begin{align}
f(\xmean + \Delta) 
  = f(\xmean) + \nabla f(\xmean)^\T \Delta + \frac12 \Delta^\T \Hess \Delta 
  \enspace. \label{eq:fdiff}
\end{align}}%
The normalized quality gain on a convex quadratic function can be written as (using \eqref{eq:fdiff} with $\Delta = \xmean^{(t+1)} - \xmean^{(t)}$ and substituting \eqref{eq:mmz}) 
\del{{\small\begin{equation}
    \begin{split}
      \bar{\phi}(\xmean, \ns) 
      =& \frac{\E[f(\xmean^{(t)}) - f(\xmean^{(t+1)}) \mid \xmean^{(t)} = \xmean, \sigma^{(t)} = \ns\norm{\nabla f(\xmean)} / \cm \Tr(\Hess)]}{\norm{\nabla f(\xmean^{(t)})}^2 / \Tr(\Hess)} 
      \\
      =& 
      \frac{\Tr(\Hess)}{\norm{\nabla f(\xmean)}} \ee^\T \E\left[\xmean^{(t+1)} - \xmean^{(t)}\right] 
      + \frac{\Tr(\Hess)}{2\norm{\nabla f(\xmean)}^2} \E\left[(\xmean^{(t+1)} - \xmean^{(t)})^\T\Hess(\xmean^{(t+1)} - \xmean^{(t)}) \right]
      \\
      =& - \ns \sum_{i=1}^{\lambda} \E[W(i; (X_k)_{k=1}^{\lambda}) \ee^\T Z_i] 
      - \frac{\ns^2}{2} \sum_{i=1}^{\lambda} \sum_{j=1}^{\lambda} \E\left[W(i; (X_k)_{k=1}^{\lambda})W(j; (X_k)_{k=1}^{\lambda}) Z_i^{\T} \Hesst Z_j \right]\enspace,
      \\
      =& - \ns \sum_{i=1}^{\lambda} \E[\E_{i}[W(i; (X_k)_{k=1}^{\lambda})] \ee^\T Z_i]
      \\
      &- \frac{\ns^2}{2} \sum_{i=1}^{\lambda} \sum_{j=1}^{\lambda} \E\left[\E_{i,j}[W(i; (X_k)_{k=1}^{\lambda})W(j; (X_k)_{k=1}^{\lambda})] Z_i^{\T} \Hesst Z_j \right]\enspace,
    \end{split}
  \end{equation}}}{}%
\begin{align*}
    \bar{\phi}(\xmean, \ns) 
    = &- \ns \sum_{i=1}^{\lambda} \E\big[\E_{i}[W(i; (X_k))] \ee^\T Z_i\big] \\
    &- \frac{\ns^2}{2} \sum_{i=1}^{\lambda} \sum_{j=1}^{\lambda} \E\big[\E_{i,j}[W(i; (X_k))W(j; (X_k))] Z_i^{\T} \Hesst Z_j \big]\enspace,
  \end{align*}%
where $X_k = \xmean^{(t)} + \sigma^{(t)} Z_k$, and $\E_{i}$ and $\E_{i,j}$ are the conditional expectations given $X_i$ and $(X_i, X_j)$, respectively.

\newcommand{\pb}{P_b}
\newcommand{\pt}{P_t}
The following lemma provides the expression of the conditional expectation of the weight function, which allows us to derive the bound for the difference between $\bar{\phi}$ and $\asynqg$.
  In the following, let
  \begin{align*}
    \pb(k; n, p) &= \textstyle \binom{n}{k} p^{k}(1 - p)^{n - k} \\
    \pt(k, l; n, p, q) &= \textstyle \rev{\binom{n}{l+k}\binom{l+k}{k}} p^{k}q^{l} (1 - (p + q))^{n-(k+l)}
  \end{align*}
  denote the probability mass functions of the binomial and trinomial distributions, respectively, where $0 \leq k \leq n$, $0 \leq l \leq n$, $k + l \leq n$, $0\leq p \leq 1$, $0 \leq q \leq 1$ and $p + q \leq 1$.
The proof of the lemma is provided in \ref{apdx:lem:0}.

\begin{lemma}\label{lem:0}
  Let $X \sim \mathcal{N}(\xmean, \sigma^2 \eye)$ and $(X_i)_{i=1}^{\lambda}$ be $\lambda$ i.i.d.\ copies of $X$. Let $F_{f}(t) = \Pr[f(X) < t]$ be the c.d.f.~of the function value $f(X)$. 
  Then, we have for any $i, j \in \llbracket 1, \lambda \rrbracket$, $i \neq j$,
  \begin{align*}
    \E_{i}[W(i; (X_k))] &= u_1(F_{f}(f(X_i))) \enspace,\\
    \E_{i}[W(i; (X_k))^2] &= u_2(F_{f}(f(X_i))) \enspace,\\
    \E_{i,j}[W(i; (X_k))W(j; (X_k)) ] &= u_3(F_{f}(f(X_i)), F_{f}(f(X_j))) \enspace,
  \end{align*}
  where    
\begin{align}
  u_1(p) &= \textstyle \sum_{k = 1}^{\lambda} w_k
           \pb(k-1; \lambda-1, p) \enspace,
                  \label{eq:u1}
         \\
  u_2(p) &= \textstyle  \sum_{k = 1}^{\lambda} w_k^2
           \pb(k-1; \lambda-1, p) \enspace,
                  \label{eq:u2}
         \\
  u_3(p, q) &= \textstyle \sum_{k = 1}^{\lambda-1}\sum_{l = k + 1}^{\lambda} w_k w_l
              \pt(k-1, l-k-1; \lambda-2, \min(p, q), \abs{q - p})
                     \enspace.
                     \label{eq:u3}
\end{align}%
\end{lemma}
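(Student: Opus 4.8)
The plan is to condition on the point(s) whose weight is being averaged and to recognise that, once these are fixed, the rank of each is governed solely by how the remaining independent points distribute among function-value regions. The crucial preliminary observation is that $f(X)$ has a continuous distribution (as for the convex quadratic objective of interest, where $X \sim \mathcal{N}(\xmean, \sigma^2\eye)$ is Gaussian), so with probability one the values $f(X_1), \dots, f(X_\lambda)$ are pairwise distinct. On this event $u_i = l_i + 1$ for every $i$, hence the weight function collapses to a single weight, $W(i; (X_k)) = w_{R_i}$, where $R_i := u_i = \sum_{k=1}^{\lambda}\ind{f(X_k) \leq f(X_i)}$ is the rank of $X_i$ (rank $1$ being the best). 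Since ties form a null set and $W$ is bounded, they contribute nothing to any expectation, and I may work throughout on the no-tie event.

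First I would treat $u_1$ and $u_2$. Conditioning on $X_i$ fixes $f(X_i)$ and writes $R_i = 1 + \sum_{k \neq i}\ind{f(X_k) < f(X_i)}$. Because the $X_k$ with $k \neq i$ are i.i.d.\ and independent of $X_i$, the indicators $\ind{f(X_k) < f(X_i)}$ are, conditionally on $X_i$, i.i.d.\ Bernoulli variables with success probability $\Pr[f(X_k) < f(X_i) \mid X_i] = F_{f}(f(X_i)) =: p$. Consequently $R_i - 1$ is conditionally $\mathrm{Binomial}(\lambda-1, p)$, i.e.\ $\Pr[R_i = k \mid X_i] = \pb(k-1; \lambda-1, p)$. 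Substituting $W(i;(X_k)) = w_{R_i}$ and $W(i;(X_k))^2 = w_{R_i}^2$ and summing over $k \in \llbracket 1, \lambda\rrbracket$ yields exactly $u_1(p)$ and $u_2(p)$.

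The joint term $u_3$ is the substantive step. I would condition on $(X_i, X_j)$ with $i \neq j$ and argue first on the event $f(X_i) < f(X_j)$, so that $p := F_{f}(f(X_i))$ and $q := F_{f}(f(X_j))$ satisfy $p < q$. Each of the remaining $\lambda - 2$ points lands independently in exactly one of three regions: strictly below $f(X_i)$ (probability $p$), strictly between $f(X_i)$ and $f(X_j)$ (probability $q - p$), or strictly above $f(X_j)$ (probability $1 - q$). Letting $(A, B)$ count the points in the first two regions, conditionally on $(X_i, X_j)$ the pair $(A,B)$ is trinomial, $\Pr[A = a, B = b \mid X_i, X_j] = \pt(a, b; \lambda-2, p, q - p)$. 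The rank of $X_i$ is $R_i = 1 + A$ (itself plus the points below it), while the rank of $X_j$ is $R_j = 2 + A + B$ (itself, the better point $X_i$, and all points below $f(X_j)$). Hence $W(i;(X_k))W(j;(X_k)) = w_{1+A}\,w_{2+A+B}$, and reindexing by $k = 1 + A \in \llbracket 1, \lambda - 1\rrbracket$ and $l = 2 + A + B \in \llbracket k+1, \lambda\rrbracket$ (so $A = k-1$, $B = l-k-1$) gives $\sum_{k=1}^{\lambda-1}\sum_{l=k+1}^{\lambda} w_k w_l\, \pt(k-1, l-k-1; \lambda-2, p, q-p)$, which equals $u_3(p,q)$ since here $p = \min(p,q)$ and $q - p = \abs{q-p}$.

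Finally I would cover the complementary event $f(X_i) > f(X_j)$ by symmetry: the identical counting applies with the roles of $i$ and $j$ exchanged, so the now-better point $X_j$ receives rank $1 + A'$ and the worse point $X_i$ receives rank $2 + A' + B'$, where the ``below both'' and ``between'' probabilities are $\min(p,q) = q$ and $\abs{q-p} = p - q$. Commutativity of the product together with the same reindexing recovers the very same formula $u_3(p,q)$, now with arguments $\min(p,q)$ and $\abs{q-p}$ exactly as stated. I expect the main obstacle to be precisely this rank bookkeeping for $u_3$ — justifying the $+1$ offset for the better point and the $+2$ offset for the worse one, and verifying that the $\min(p,q)/\abs{q-p}$ parametrisation unifies the two orderings — rather than any analytic difficulty; the conditional independence and the (tri)nomial structure follow routinely from the i.i.d.\ assumption once the no-tie reduction is in place.
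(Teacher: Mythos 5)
Your proposal is correct and follows essentially the same route as the paper's proof: conditioning on $X_i$ (resp.\ $(X_i,X_j)$) to obtain a binomial (resp.\ trinomial) distribution for the rank counts, then summing the weights against these probability mass functions. Your treatment is in fact slightly more explicit than the paper's --- you spell out the almost-sure no-tie reduction and the rank offsets $R_i = 1+A$, $R_j = 2+A+B$, and you handle both orderings of $f(X_i)$ and $f(X_j)$ rather than invoking symmetry w.l.o.g.\ --- but these are presentational refinements of the identical argument.
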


Thanks to Lemma~\ref{lem:0} and the fact that $(X_k)_{k=1}^{\lambda}$ are i.i.d., we can further rewrite the normalized quality gain \del{\nnew{by taking the iterated expectation $\E = \E_{X_i}\E_{X_k,k\neq i}$ }}as
\del{{\small\begin{align}
\bar{\phi}(\xmean, \ns)
=& - \ns \lambda  \E\big[u_1(F_{f}(f(X))) \ee^\T Z\big] - \frac{\ns^2\lambda}{2} \E\big[u_2(F_{f}(f(X))) Z^{\T} \Hesst Z \big] \notag\\
& - \frac{\ns^2(\lambda - 1)\lambda}{2} \E\big[u_3(F_{f}(f(X)), F_{f}(f(\tilde{X}))) Z^{\T} \Hesst \tilde{Z} \big] \notag\\
=& - \ns \lambda  \E\big[u_1(F_{f}(f(X))) \ee^\T Z\big] - \frac{\ns^2\lambda}{2} \E[u_2(F_{f}(f(X)))]  - \frac{\ns^2\lambda}{2} \E\big[u_2(F_{f}(f(X))) \big(Z^{\T} \Hesst Z - 1\big) \big] \notag\\
&- \frac{\ns^2(\lambda - 1)\lambda}{2} \E\big[u_3(F_{f}(f(X)), F_{f}(f(\tilde X))) Z^{\T} \Hesst \tilde Z \big]\enspace.
  \label{eq:main}
\end{align}}}{}%
\begin{multline}
\bar{\phi}(\xmean, \ns)
= - \ns \lambda  \E\big[u_1(F_{f}(f(X))) \ee^\T Z\big] 
- \frac{\ns^2\lambda}{2} \E\big[u_2(F_{f}(f(X))) \big(Z^{\T} \Hesst Z - 1\big) \big] \\
- \frac{\ns^2\lambda}{2} \E[u_2(F_{f}(f(X)))] 
- \frac{\ns^2(\lambda - 1)\lambda}{2} \E\big[u_3(F_{f}(f(X)), F_{f}(f(\tilde X))) Z^{\T} \Hesst \tilde Z \big]\enspace.
  \label{eq:main}
\end{multline}%
Here $Z$ and $\tilde{Z}$ are independent and $\mathcal{N}(\bm{0}, \eye)$-distributed, and $X = \xmean + \sigma Z$ and $\tilde{X} = \xmean + \sigma \tilde{Z}$, where $\sigma = \ns \norm{\nabla f(\xmean)} / \cm$ is the scale-invariant step-size. Note that $X$ and $\tilde{X}$ are independent and $\NN(\xmean, \sigma^2 \eye)$-distributed.

The following Lemma shows the Lipschitz continuit\rev{y} of $u_1$, $u_2$, and $u_3$. The proof is provided in \ref{apdx:lem:u1lip}.
\begin{lemma}
\label{lem:u1lip}
The functions $u_1$, $u_2$, and $u_3$ are $\ell_1$-Lipschitz continuous, i.e., $\abs{u_1(p_1) - u_1(p_{\rev{2}})} \leq L_1 \abs{p_1 - p_2}$, $\abs{u_2(p_1) - u_2(p_{\rev{2}})} \leq L_2 \abs{p_1 - p_2}$, and $\abs{u_3(p_1, q_1) - u_3(p_{\rev{2}}, q_{\rev{2}})} \leq L_3 (\abs{p_1 - p_2} + \abs{q_1 - q_2})$, with the Lipschitz constants 
\begin{align*}
  L_1 &= \textstyle {\displaystyle \sup_{0 < p < 1}} \abs*{(\lambda - 1) \sum_{k = 1}^{\lambda-1} (w_{k+1} - w_{k}) \pb(k-1; \lambda-2, p)} \enspace,
  \\
  L_2 &= \textstyle {\displaystyle \sup_{0 < p < 1}}  \abs*{(\lambda - 1) \sum_{k = 1}^{\lambda-1} (w_{k+1}^2 - w_{k}^2) \pb(k-1; \lambda-2, p)}\enspace,
  \\
  L_3 &= \textstyle \max\Bigg[{\displaystyle \sup_{0 < p < q < 1}} \abs*{\sum_{k=1}^{\lambda-2}\sum_{l=k+2}^{\lambda} w_l (w_{k+1} - w_{k})  
        \pt(k-1, l-k-2; \lambda-3, p, q-p)
        } \enspace,
\\
	&\quad
          \textstyle{\displaystyle \sup_{0 < p < q < 1}} \abs*{\sum_{k=1}^{\lambda-2}\sum_{l=k+2}^{\lambda} w_k (w_{l} - w_{l-1})
          \pt(k-1, l-k-2; \lambda-3, p, q-p)
          }\Bigg] (\lambda-2) \enspace.
\end{align*}%
\end{lemma}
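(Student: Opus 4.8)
The plan is to reduce the Lipschitz claims to uniform bounds on the (partial) derivatives of $u_1$, $u_2$, $u_3$. On the open set where each function is differentiable it is in fact a polynomial (in $p$, resp.\ in $p$ and $q$), so it suffices to compute the derivative in closed form, recognize it as exactly the inner expression whose supremum defines $L_1$, $L_2$, $L_3$, and then recover the Lipschitz inequality by integrating the derivative along coordinate segments. The two recurring ingredients are (i) the derivative identity for the binomial/trinomial probability mass functions, and (ii) an Abel summation (summation by parts) that converts differences of consecutive $\pb$/$\pt$ values into differences of consecutive weights $w_k$.

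For $u_1$ (and verbatim for $u_2$, replacing $w_k$ by $w_k^2$) I would start from the Bernstein-basis identity $\frac{d}{dp}\pb(k;n,p) = n[\pb(k-1;n-1,p) - \pb(k;n-1,p)]$, obtained by differentiating $\binom{n}{k}p^k(1-p)^{n-k}$. Applying it term by term in \eqref{eq:u1} gives $u_1'(p) = (\lambda-1)\sum_{k=1}^{\lambda} w_k[\pb(k-2;\lambda-2,p) - \pb(k-1;\lambda-2,p)]$. A single summation by parts in $k$ reorganizes this into $(\lambda-1)\sum_{k=1}^{\lambda-1}(w_{k+1}-w_k)\pb(k-1;\lambda-2,p)$, with the two boundary terms vanishing since $\pb(-1;\lambda-2,p)=\pb(\lambda-1;\lambda-2,p)=0$. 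This is precisely the quantity bounded by $L_1$, so $\abs{u_1'(p)}\leq L_1$ on $(0,1)$ and $\abs{u_1(p_1)-u_1(p_2)} = \abs[\big]{\int_{p_2}^{p_1} u_1'(t)\,dt} \leq L_1\abs{p_1-p_2}$.

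For $u_3$ the same idea is carried out in two variables. Since $u_3(p,q)$ depends on $p,q$ only through $\min(p,q)$ and $\abs{q-p}$, it is symmetric, so I would restrict to the region $0<p<q<1$, where $u_3(p,q) = \sum_{k,l} w_k w_l\,\pt(k-1,l-k-1;\lambda-2,p,q-p)$. Writing $\alpha=p$ and $\beta=q-p$ for the first two category probabilities, the chain rule yields $\partial_q u_3 = \sum w_k w_l\,\partial_\beta\pt$ and $\partial_p u_3 = \sum w_k w_l(\partial_\alpha-\partial_\beta)\pt$. The trinomial analog of the Bernstein identity, $\partial_\alpha\pt(a,b;n)=n[\pt(a-1,b;n-1)-\pt(a,b;n-1)]$ and $\partial_\beta\pt(a,b;n)=n[\pt(a,b-1;n-1)-\pt(a,b;n-1)]$, makes the common term cancel in $\partial_\alpha-\partial_\beta$. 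A summation by parts in $l$ (for $\partial_q u_3$) and in $k$ (for $\partial_p u_3$) then produces exactly the two inner expressions in $L_3$, again with every boundary contribution vanishing because the relevant $\pt$ carries a negative or over-full count. Hence both $\abs{\partial_p u_3}$ and $\abs{\partial_q u_3}$ are bounded by $L_3$ on $\{p<q\}$, and by symmetry on $\{p>q\}$; integrating successively in $p$ and in $q$ along the segment joining $(p_2,q_2)$ to $(p_1,q_1)$, split at its crossing of the diagonal $\{p=q\}$, gives $\abs{u_3(p_1,q_1)-u_3(p_2,q_2)} \leq L_3(\abs{p_1-p_2}+\abs{q_1-q_2})$.

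The main obstacle is the $u_3$ step: executing the double summation by parts correctly, keeping track of the index shifts such as $(k-1,l-k-1)\mapsto(k-1,l-k-2)$ or $(k-2,l-k-1)$, and checking that each emerging boundary term is genuinely an out-of-range, hence zero, $\pt$ value. A secondary subtlety is that $u_3$ is only piecewise smooth, with a possible kink along $p=q$, so the Lipschitz constant must be transferred across the diagonal by this gluing argument rather than by a single global derivative bound; it goes through because $u_3$ is continuous and the $\ell_1$ length is additive along a straight segment as it crosses $\{p=q\}$.
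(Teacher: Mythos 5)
Your proposal is correct and essentially reproduces the paper's own argument: the paper likewise differentiates $u_1$, $u_2$, $u_3$ in closed form, rearranges the terms (your summation by parts, with the same vanishing boundary terms) into exactly the expressions whose suprema define $L_1$, $L_2$, $L_3$, and recovers the Lipschitz inequalities from the a.e.\ bounded derivatives together with continuity across the kink of $u_3$ at $p=q$. The only cosmetic difference is that you compute both partials $\partial_p u_3$ and $\partial_q u_3$ on the region $\{p<q\}$, whereas the paper computes $\partial_p u_3$ on both sides of the diagonal and then invokes the symmetry $u_3(p,q)=u_3(q,p)$; by that same symmetry these are the same two formulas appearing in the $\max$ defining $L_3$.
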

Upper bounds for the above Lipschitz constants are discussed in \ref{apdx:Lbound:deriv}.

\subsection{Theorem: Normalized Quality Gain on Convex Quadratic Functions}

The following main theorem provides the error bound between $\bar{\phi}$ and $\asynqg$.

\begin{theorem}\label{thm:main}
\rev{Consider Algorithm~\ref{alg:es} and let $f$ be a convex quadratic objective function \eqref{eq:quad}}. Let the normalized step-size $\ns$ and the normalized quality gain $\bar{\phi}$ defined in Definition~\ref{def:ns}\del{as \eqref{eq:optsig}} and Definition~\ref{def:nqg}\del{\eqref{eq:nqgquad}}, respectively. Let $\ee_{\xmean} = \nabla f(\xmean) / \norm{\nabla f(\xmean)}$ and $\alpha = \min\big( 1, (\ns / \cm) \Tr(\Hesst^2)^{1/2} \big)$. Define
\begin{equation}
G(\alpha) = \min\left[1,\ \alpha \left(2 + \frac{2^\frac12(\ln(1/\alpha))^\frac12}{\pi^\frac12} + \frac{\eig_1(\Hesst)\ln(1/\alpha)}{(2\pi)^\frac12 \Tr(\Hesst^2)^\frac12} \right)  \right] \label{eq:galpha}
\end{equation}%
and
\begin{multline}
\asynqg(\ns, (w_k), \ee_{\xmean}, \Hesst) 
= - \ns \sum_{i=1}^{\lambda} w_i\E[\mathcal{N}_{i:\lambda}] - \frac{\ns^2}{2}\sum_{i=1}^\lambda w_i^2\left(1 - \ee_{\xmean}^\T\Hesst\ee_{\xmean}\right) \\ - \frac{\ns^2}{2} \ee_{\xmean}^\T\Hesst\ee_{\xmean}\sum_{i=1}^\lambda\sum_{j=1}^\lambda w_i w_j \E[\NN_{i:\lambda}\NN_{j:\lambda}]
\enspace,
\label{eq:asynqg}
\end{multline}%
and let $L_1$, $L_2$, $L_3$ be the Lipschitz constants of $u_1$, $u_2$ and $u_3$ defined in Lemma~\ref{lem:0}, respectively. Then, 
\begin{multline}
\sup_{\xmean \in \R^N\setminus\{\bm{0}\}}\abs*{\bar{\phi}(\xmean, \ns) - \asynqg(\ns, (w_k), \ee_{\xmean}, \Hesst) }
\leq \ns \lambda L_1 \big((2 / \pi)^\frac12 G(\alpha) + (4\pi)^{-\frac12} \alpha \big)
	\\
	+ \ns\cm \lambda L_2 \big( 2^{-\frac12} G(\alpha) + (8\pi)^{-\frac12}\alpha \big) \alpha
	+ \ns\cm \lambda(\lambda - 1)L_3 \big((2 / \pi)^\frac12 G(\alpha) + (2 \pi^2)^{-\frac12} \alpha \big) \alpha
	 \enspace.
         \label{eq:RHS}
       \end{multline}%
\end{theorem}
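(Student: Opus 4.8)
The plan is to start from the exact expansion \eqref{eq:main} of $\bar{\phi}$ and to show that replacing the rank transform $F_{f}(f(X))$ inside $u_1,u_2,u_3$ by the ``linearised'' rank $\Phi(\ee_{\xmean}^\T Z)$ produces \emph{exactly} $\asynqg$, so that the entire error comes from the discrepancy $D := F_{f}(f(X)) - \Phi(\ee_{\xmean}^\T Z)$. Writing $S := \ee_{\xmean}^\T Z$, the first step is an identity computation: since $f(X)$ is a continuous random variable both $F_{f}(f(X))$ and $\Phi(S)$ are uniform on $[0,1]$, and the densities of normal order statistics (\ref{apdx:nos}) give $\E[u_1(\Phi(S))S] = \tfrac1\lambda\sum_i w_i\E[\NN_{i:\lambda}]$, the exact (hence error-free) match $\E[u_2(\Phi(S))] = \tfrac1\lambda\sum_i w_i^2$, and, using the orthogonal decomposition $\E[Z^\T\Hesst Z - 1 \mid S] = (S^2-1)\,\ee_{\xmean}^\T\Hesst\ee_{\xmean}$, the relation $\E[u_2(\Phi(S))(Z^\T\Hesst Z-1)] = \tfrac1\lambda\,\ee_{\xmean}^\T\Hesst\ee_{\xmean}\sum_i w_i^2(\E[\NN_{i:\lambda}^2]-1)$. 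For the cross term I would use the bivariate order-statistic density together with $\E[Z^\T\Hesst\tilde Z\mid S,\tilde S] = S\tilde S\,\ee_{\xmean}^\T\Hesst\ee_{\xmean}$ to obtain $\E[u_3(\Phi(S),\Phi(\tilde S))Z^\T\Hesst\tilde Z] = \tfrac{1}{\lambda(\lambda-1)}\ee_{\xmean}^\T\Hesst\ee_{\xmean}\sum_{i\neq j}w_iw_j\E[\NN_{i:\lambda}\NN_{j:\lambda}]$. Substituting these into the four terms of \eqref{eq:main} and fusing the diagonal and off-diagonal sums into $\sum_{i,j}w_iw_j\E[\NN_{i:\lambda}\NN_{j:\lambda}]$ reproduces \eqref{eq:asynqg} term by term.

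Next I would estimate the error. Subtracting the surrogate term by term and applying the Lipschitz bounds of Lemma~\ref{lem:u1lip} reduces everything to three expectations: $\E[\abs{D}\abs{S}]$ with coefficient $\ns\lambda L_1$, $\E[\abs{D}\abs{Z^\T\Hesst Z-1}]$ with coefficient $\tfrac{\ns^2\lambda}{2}L_2$, and $\E[\abs{D}\abs{Z^\T\Hesst\tilde Z}]$ with coefficient $\ns^2\lambda(\lambda-1)L_3$ (after using the symmetry in the two indices and the $\ell_1$ form of $L_3$ to collapse the two argument differences into one). Because $D$ depends on $Z$ only, in the third expectation one integrates out $\tilde Z$ first via $\E_{\tilde Z}[\abs{Z^\T\Hesst\tilde Z}] = (2/\pi)^{1/2}\norm{\Hesst Z}$; the numerical constants $(2/\pi)^{1/2}$, $(4\pi)^{-1/2}$, $(8\pi)^{-1/2}$ and $(2\pi^2)^{-1/2}$ in \eqref{eq:RHS} then emerge from these exact Gaussian moments together with $\E\abs{S} = (2/\pi)^{1/2}$ and $\Var(Z^\T\Hesst Z) = 2\Tr(\Hesst^2)$.

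The heart of the argument is the bound on $D$ itself. Scaling \eqref{eq:fdiff} by $\sigma\norm{\nabla f(\xmean)}$ with $\sigma = \ns\norm{\nabla f(\xmean)}/\cm$ shows that the rank of $X$ is governed by $S + c\,Z^\T\Hesst Z$ with $c = \ns/(2\cm)$, so $D = F_{Y}(S + cZ^\T\Hesst Z) - \Phi(S)$ where $Y = \tilde S + c\tilde Z^\T\Hesst\tilde Z$ is the independent copy. Centring the common quadratic shift, I would split $D = [F_{Y'}(s) - \Phi(s)] + [\Phi(s) - \Phi(S)]$ with $s = S + c(Z^\T\Hesst Z - 1)$ and $Y' = \tilde S + c(\tilde Z^\T\Hesst\tilde Z - 1)$. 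The second bracket is a pure shift, controlled by $\Phi' \leq (2\pi)^{-1/2}$ and the second moment of the centred quadratic form, whose standard deviation is $c\,(2\Tr(\Hesst^2))^{1/2} = \alpha/\sqrt2$; combined with the Gaussian moments above this yields the $\alpha$-proportional summands (e.g.\ $\E[\abs{\Phi(s)-\Phi(S)}\abs{S}] \leq (4\pi)^{-1/2}\alpha$). The first bracket is the Kolmogorov distance between $Y'$ and a standard normal, and I would prove $\sup_{s}\abs{F_{Y'}(s) - \Phi(s)}\leq G(\alpha)$ by truncating at a level $\delta$: the Gaussian band $\{\abs{\tilde S - s}\leq\delta\}$ contributes $\leq 2\delta(2\pi)^{-1/2}$, while the complementary event is bounded by $\Pr[\abs{c(\tilde Z^\T\Hesst\tilde Z-1)}>\delta]$, for which a Hanson--Wright estimate yields a sub-Gaussian regime governed by $\Tr(\Hesst^2)^{1/2}$ and a sub-exponential regime governed by $\eig_1(\Hesst)$; optimising $\delta$ produces exactly the two logarithmic corrections $(\ln(1/\alpha))^{1/2}$ and $\eig_1(\Hesst)\ln(1/\alpha)/\Tr(\Hesst^2)^{1/2}$ in $G(\alpha)$. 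The outer $\min[1,\cdot]$ in both $G(\alpha)$ and $\alpha$ records that the trivial bounds $\abs{D}\leq 1$ and $\abs{F_{Y'}-\Phi}\leq 1$ are invoked whenever $(\ns/\cm)\Tr(\Hesst^2)^{1/2}\geq 1$.

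I expect this last tail estimate to be the main obstacle: obtaining the Kolmogorov distance with sharp constants and the precise form $G(\alpha)$ requires an eigenvalue-sensitive concentration inequality for $\tilde Z^\T\Hesst\tilde Z$ (where both $\Tr(\Hesst^2)$ and $\eig_1(\Hesst)$ intervene) and a careful treatment of the correlation between $\tilde S = \ee_{\xmean}^\T\tilde Z$ and that quadratic form. Finally, since every bound above depends on $\xmean$ only through $\ee_{\xmean}$ and is controlled uniformly (for instance $\ee_{\xmean}^\T\Hesst\ee_{\xmean}\leq\eig_1(\Hesst)$, and the $\norm{\Hesst Z}$ estimates are $\xmean$-free in expectation), the supremum over $\xmean\in\R^N\setminus\{\bm{0}\}$ passes through the estimates and delivers \eqref{eq:RHS}.
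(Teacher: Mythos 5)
Your proposal is correct and follows essentially the same route as the paper: your identity step is the paper's Lemma~\ref{lem:os}, your Lipschitz reduction to three error expectations and your split of $D$ into a Kolmogorov distance plus a $\Phi$-shift term reproduce Lemmas~\ref{lem:u1}--\ref{lem:u3} and the key inequality in their proofs, and your truncation argument for $\sup_{t}\abs{F_{Y'}(t)-\Phi(t)}\leq G(\alpha)$ is exactly the paper's Lemma~\ref{lem:weak}, which gets the two-regime tail bound (Frobenius-norm sub-Gaussian part, operator-norm sub-exponential part) from Lemma~1 of \cite{laurent2000}, a sharp-constant instance of the Hanson--Wright estimate you invoke. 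The coefficients and Gaussian-moment constants you predict match the paper's bound term by term, so no gap remains.
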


The above theorem claims that if the right-hand side (RHS) of \eqref{eq:RHS} is sufficiently small, the normalized quality gain $\bar{\phi}$ is approximated by the asymptotic normalized quality gain $\asynqg$ defined in \eqref{eq:asynqg}. Compared to $\bar{\phi}_{\infty}$ in \eqref{eq:nqlim} derived for the infinite dimensional sphere function, $\asynqg$ is different even when $\Hess \propto \eye$. We investigate the properties of $\asynqg$ in Section~\ref{sec:asynqg}. The situations where the RHS of \eqref{eq:RHS} is sufficiently small
are discussed in Section~\ref{sec:cm} and Section~\ref{sec:infn}. \rev{We remark that Theorem~3.4 in \cite{Akimoto2017foga} provides a bound for the difference between $\bar{\phi}$ and $\bar{\phi}_\infty$, instead of the difference between $\bar{\phi}$ and $\asynqg$. Introducing $\asynqg$ allows us to consider a finite dimensional case and to derive a tighter bound.}

\subsection{Outline of the Proof of the Main Theorem}
\label{sec:proof}

\providecommand{\Ze}{Z_{\ee}}
\providecommand{\Zee}{Z_{\bot}}
\providecommand{\Zet}{\tilde{Z}_{\ee}}
\providecommand{\Zeet}{\tilde{Z}_{\bot}}

In the following of the section and in \ref{apdx:proofs}, let $\Ze = \ee^\T Z$, $\Zee = Z - \Ze \ee$, and $X = \xmean + \sigma Z$ for $Z \sim \mathcal{N}(\bm{0}, \eye)$. Then, $\Ze \sim \mathcal{N}(0, 1)$ and $\Zee \sim \mathcal{N}(\bm{0}, \eye-\ee\ee^\T)$ and they are independent. Define
\begin{align*}
    H_N &= \frac{f(\xmean + \sigma Z) - \E[f(\xmean + \sigma Z)] }{ \sigma \norm{\nabla f(\xmean)}}&&\text{and}
    &h(Z) 
	&= \frac12 \frac{\ns}{\cm}(Z^\T \Hesst Z - 1) \enspace, 
\end{align*}%
where $\E[f(\xmean + \sigma Z)] = f(\xmean) + \sigma^2 / 2$. It is easy to see that $H_N = \Ze + h(Z)$. Let $F_{f}$ and $F_{N}$ be the c.d.f.~induced by $f(X)$ and $H_N$, respectively. Then, $F_{f}(f(X)) = F_{N}(H_N)$. Let $\tilde{Z}$ be the i.i.d.\ copy of $Z$ and define $\Zet$, $\Zeet$, $\tilde{X}$, and $\tilde{H}_N$ analogously. 

The first lemma allows us to approximate $F_{N}$ (hence $F_{f}$) with the c.d.f.~$\Phi$ of the standard normal distribution. \rev{The proof is based on the Lipschitz continuity of $\Phi$ and the tail bound of $h(Z)$ proved in Lemma~1 of \cite{laurent2000}.} The detail is provided in \ref{apdx:lem:weak}. 
\begin{lemma}\label{lem:weak}
Let $\alpha$ and $G(\alpha)$ be defined in Theorem~\ref{thm:main}. 
Then, $\sup_{t \in \R} \abs{F_{N}(t) - \Phi(t)} \leq G(\alpha)$.
\end{lemma}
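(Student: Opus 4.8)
The plan is to exploit the decomposition $H_N = \Ze + h(Z)$ noted above, where $\Ze = \ee^\T Z \sim \mathcal{N}(0,1)$ has c.d.f.\ exactly $\Phi$, and to treat $h(Z)$ as a small centered perturbation. First note that $\E[h(Z)] = \frac{\ns}{2\cm}(\Tr(\Hesst) - 1) = 0$ because $\Tr(\Hesst) = 1$, so $h(Z)$ is a centered quadratic form in a standard Gaussian vector. The key device is a sandwiching inequality that needs no independence between $\Ze$ and $h(Z)$: for any $\epsilon > 0$ and any $t \in \R$, splitting the event $\{H_N < t\}$ according to whether $|h(Z)| < \epsilon$ yields $\{\Ze < t - \epsilon\} \cap \{|h(Z)| < \epsilon\} \subseteq \{H_N < t\} \subseteq \{\Ze < t + \epsilon\} \cup \{|h(Z)| \geq \epsilon\}$, whence
\begin{equation*}
\Phi(t - \epsilon) - \Pr[|h(Z)| \geq \epsilon] \leq F_{N}(t) \leq \Phi(t + \epsilon) + \Pr[|h(Z)| \geq \epsilon] \enspace.
\end{equation*}
Since $\Phi$ is $(2\pi)^{-1/2}$-Lipschitz (its density is bounded by $(2\pi)^{-1/2}$), subtracting $\Phi(t)$ gives the uniform estimate $\sup_{t \in \R}|F_{N}(t) - \Phi(t)| \leq \epsilon/(2\pi)^{1/2} + \Pr[|h(Z)| \geq \epsilon]$.

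It then remains to control the tail $\Pr[|h(Z)| \geq \epsilon]$ and to choose $\epsilon$. Diagonalizing $\Hesst$ shows that $Z^\T \Hesst Z - \Tr(\Hesst) = \sum_i \eig_i(\Hesst)(\tilde{Z}_i^2 - 1)$ for i.i.d.\ standard normal $\tilde{Z}_i$ and the nonnegative eigenvalues $\eig_i(\Hesst)$ of $\Hesst$, with $\sum_i \eig_i(\Hesst)^2 = \Tr(\Hesst^2)$ and $\max_i \eig_i(\Hesst) = \eig_1(\Hesst)$. The chi-square tail bound of Laurent and Massart (Lemma~1 of \cite{laurent2000}) then gives, for any $x > 0$,
\begin{align*}
\Pr\big[Z^\T \Hesst Z - 1 \geq 2\Tr(\Hesst^2)^{1/2}\sqrt{x} + 2\eig_1(\Hesst) x\big] &\leq e^{-x} \enspace, \\
\Pr\big[Z^\T \Hesst Z - 1 \leq -2\Tr(\Hesst^2)^{1/2}\sqrt{x}\big] &\leq e^{-x} \enspace.
\end{align*}
Taking $x = \ln(1/\alpha)$ and $\epsilon = \frac{\ns}{\cm}\big(2\Tr(\Hesst^2)^{1/2}\sqrt{x} + \eig_1(\Hesst)\,x\big)$ translates $\{|h(Z)| \geq \epsilon\}$ into $\{|Z^\T \Hesst Z - 1| \geq 4\Tr(\Hesst^2)^{1/2}\sqrt{x} + 2\eig_1(\Hesst)x\}$, so the upper tail (which carries the extra linear term) and the lower tail are each at most $\alpha$, giving $\Pr[|h(Z)| \geq \epsilon] \leq 2\alpha$. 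On the branch where the $\min$ defining $\alpha$ equals $(\ns/\cm)\Tr(\Hesst^2)^{1/2}$, substituting this identity collapses $\epsilon/(2\pi)^{1/2} + 2\alpha$ exactly into $\alpha\big(2 + \tfrac{\sqrt2}{\sqrt\pi}(\ln(1/\alpha))^{1/2} + \tfrac{\eig_1(\Hesst)\ln(1/\alpha)}{(2\pi)^{1/2}\Tr(\Hesst^2)^{1/2}}\big)$, i.e.\ the bracketed expression of $G(\alpha)$. Combining with the trivial bound $\sup_t|F_{N}(t) - \Phi(t)| \leq 1$ (a difference of two c.d.f.s) produces the outer $\min$, and the degenerate case $\alpha = 1$ reduces to this trivial bound.

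The main obstacle is the bookkeeping in the last step: one must verify that the two-sided use of the Laurent--Massart inequality, together with this particular $\epsilon$, reproduces \emph{precisely} the three coefficients of $G(\alpha)$, and that the substitution $(\ns/\cm)\Tr(\Hesst^2)^{1/2} = \alpha$ is legitimate (it is, exactly when the $\min$ is not attained at $1$). A point worth stressing is that no independence of $\Ze$ and $h(Z)$ is ever invoked: the whole argument rests only on the monotone sandwiching of events and the Lipschitz regularity of $\Phi$, which is what allows the perturbation to be a quadratic form in the very same Gaussian vector that produces $\Ze$.
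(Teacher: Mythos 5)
Your proof is correct and takes essentially the same route as the paper's: the same decomposition $H_N = \ee^\T Z + h(Z)$, the same $(2\pi)^{-1/2}$-Lipschitz bound on $\Phi$, and the same Laurent--Massart tail inequalities evaluated at $x = \ln(1/\alpha)$, with the same substitution $(\ns/\cm)\Tr(\Hesst^2)^{1/2} = \alpha$ on the nontrivial branch of the $\min$ and the trivial bound $1$ otherwise. The only difference is bookkeeping: you use a single symmetric threshold $\epsilon$ (equal to the sum of the paper's one-sided thresholds $\epsilon_+$ and $\epsilon_-$) and pay the Lipschitz penalty once, whereas the paper tunes an asymmetric threshold to each tail and pays the penalty on each side; both accountings collapse to exactly the bracketed expression in $G(\alpha)$.
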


The following three lemmas are used to bound each term on the RHS of \eqref{eq:main}. \rev{The proofs are straight-forward from the Lipschitz continuity of $u_1$, $u_2$ and $u_3$ and Lemma~\ref{lem:weak}. The detailed proofs are found in \ref{apdx:lem:u1}, \ref{apdx:lem:u2}, and \ref{apdx:lem:u3}, respectively.}
\begin{lemma}\label{lem:u1}
  Let $L_1$, $\alpha$, and $G(\alpha)$ be the quantities appeared in Lemma~\ref{lem:u1lip} and Theorem~\ref{thm:main}. Then,
  $$\abs{\E[u_1(F_{f}(f(X))) \Ze] - \E[u_1(\Phi(\Ze)) \Ze]} \leq L_1 \big((2 / \pi)^\frac12 G(\alpha) + (4\pi)^{-\frac12} \alpha \big)\enspace.$$
\end{lemma}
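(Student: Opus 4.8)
The plan is to exploit the identity $F_{f}(f(X)) = F_{N}(H_N)$ with $H_N = \Ze + h(Z)$ established in the proof outline, which rewrites the target quantity $\E[u_1(F_{f}(f(X)))\Ze]$ as $\E[u_1(F_{N}(H_N))\Ze]$, and then to compare it against $\E[u_1(\Phi(\Ze))\Ze]$ by inserting the intermediate term $u_1(\Phi(H_N))$. Splitting
\[
u_1(F_{N}(H_N)) - u_1(\Phi(\Ze)) = \big(u_1(F_{N}(H_N)) - u_1(\Phi(H_N))\big) + \big(u_1(\Phi(H_N)) - u_1(\Phi(\Ze))\big),
\]
multiplying by $\Ze$, taking expectations, and using $\abs{\E[(\cdot)\Ze]}\leq\E[\abs{\cdot}\abs{\Ze}]$ reduces the lemma to bounding two expectations.

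For the first piece I would combine the $L_1$-Lipschitz continuity of $u_1$ (Lemma~\ref{lem:u1lip}) with the uniform bound $\sup_{t}\abs{F_{N}(t) - \Phi(t)} \leq G(\alpha)$ from Lemma~\ref{lem:weak}, giving $\abs{u_1(F_{N}(H_N)) - u_1(\Phi(H_N))} \leq L_1 G(\alpha)$ pointwise; after multiplying by $\abs{\Ze}$ and using $\E[\abs{\Ze}] = (2/\pi)^{1/2}$ (as $\Ze \sim \NN(0,1)$) this piece is at most $L_1 (2/\pi)^{1/2} G(\alpha)$, exactly the first term on the right-hand side. For the second piece I would use the Lipschitz continuity of $u_1$ together with that of $\Phi$, whose derivative is bounded by $(2\pi)^{-1/2}$; since $H_N - \Ze = h(Z)$ this yields $\abs{u_1(\Phi(H_N)) - u_1(\Phi(\Ze))} \leq L_1 (2\pi)^{-1/2}\abs{h(Z)}$, so the piece is controlled by $L_1 (2\pi)^{-1/2}\E[\abs{h(Z)}\abs{\Ze}]$. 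By Cauchy--Schwarz $\E[\abs{h(Z)}\abs{\Ze}] \leq (\E[h(Z)^2])^{1/2}(\E[\Ze^2])^{1/2}$, and since $\E[Z^\T \Hesst Z] = \Tr(\Hesst) = 1$ and $\Var(Z^\T \Hesst Z) = 2\Tr(\Hesst^2)$ for $Z \sim \NN(\bm{0}, \eye)$, we get $\E[h(Z)^2] = \tfrac12 (\ns/\cm)^2 \Tr(\Hesst^2)$; hence $\E[\abs{h(Z)}\abs{\Ze}] \leq 2^{-1/2}(\ns/\cm)\Tr(\Hesst^2)^{1/2}$ and this piece is at most $L_1 (4\pi)^{-1/2}(\ns/\cm)\Tr(\Hesst^2)^{1/2}$.

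The main obstacle is that the Cauchy--Schwarz estimate produces $(\ns/\cm)\Tr(\Hesst^2)^{1/2}$ rather than the capped quantity $\alpha = \min(1, (\ns/\cm)\Tr(\Hesst^2)^{1/2})$ in the statement, so the two pieces add up to the claimed bound only when $(\ns/\cm)\Tr(\Hesst^2)^{1/2} \leq 1$. I would therefore split into cases. If $(\ns/\cm)\Tr(\Hesst^2)^{1/2} \leq 1$, then $\alpha = (\ns/\cm)\Tr(\Hesst^2)^{1/2}$ and the two pieces sum to exactly $L_1\big((2/\pi)^{1/2} G(\alpha) + (4\pi)^{-1/2}\alpha\big)$. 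If instead $(\ns/\cm)\Tr(\Hesst^2)^{1/2} > 1$, then $\alpha = 1$ and $G(\alpha) = G(1) = 1$ (the logarithmic terms in $G$ vanish at $\alpha=1$), so the claimed bound equals $L_1\big((2/\pi)^{1/2} + (4\pi)^{-1/2}\big)$; here I would discard the Lipschitz-of-$\Phi$ estimate and instead use the trivial bound $\abs{F_{N}(H_N) - \Phi(\Ze)} \leq 1$ (both lie in $[0,1]$), which through $\abs{u_1(F_{N}(H_N)) - u_1(\Phi(\Ze))} \leq L_1$ and $\E[\abs{\Ze}] = (2/\pi)^{1/2}$ gives a total of $L_1 (2/\pi)^{1/2}$, no larger than the claimed bound since $(4\pi)^{-1/2} > 0$. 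Recognizing that the caps built into $\alpha$ and $G(\alpha)$ are precisely what allows the cruder uniform bound to take over in the large-$\ns/\cm$ regime is the only non-routine point; the rest is the triangle inequality together with the Lipschitz and moment estimates.
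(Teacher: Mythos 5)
Your proof is correct and follows essentially the same route as the paper's: the identity $F_{f}(f(X)) = F_{N}(H_N)$, the insertion of the intermediate value $\Phi(H_N)$, the Lipschitz constants of $u_1$ and of $\Phi$, Lemma~\ref{lem:weak}, the identity $\E[\abs{\Ze}] = (2/\pi)^{1/2}$, and Cauchy--Schwarz together with $\E[h(Z)^2] = \tfrac12(\ns/\cm)^2\Tr(\Hesst^2)$. (The paper applies the $L_1$-Lipschitz bound first and then splits $F_{N}(H_N)-\Phi(\Ze)$ by the triangle inequality, whereas you split inside $u_1$ first; these are the same estimate in a different order.)

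There is one point where you are genuinely more careful than the paper. The paper's proof asserts $\E[\abs{h(Z)}^2] = \alpha^2/2$, which is an equality only in the regime $(\ns/\cm)\Tr(\Hesst^2)^{1/2} \leq 1$; when the minimum in the definition of $\alpha$ caps it at $1$, one has $\E[\abs{h(Z)}^2] = \tfrac12(\ns/\cm)^2\Tr(\Hesst^2) > \alpha^2/2$, and the paper's displayed chain of inequalities does not literally yield the stated bound. Your case split closes exactly this corner: for $\alpha = 1$ you discard the $\Phi$-Lipschitz estimate and use the trivial bound $\abs{F_{N}(H_N) - \Phi(\Ze)} \leq 1$, giving $L_1(2/\pi)^{1/2} \leq L_1\big((2/\pi)^{1/2}G(1) + (4\pi)^{-1/2}\big)$, which is what the claimed inequality reduces to there. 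This mirrors how the paper itself dispatches the case $\alpha = 1$ at the start of the proof of Lemma~\ref{lem:weak} but omits to do in the proof of this lemma; your version is the complete argument.
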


\begin{lemma}\label{lem:u2}
  Let $L_2$, $\alpha$, and $G(\alpha)$ be the quantities appeared in Lemma~\ref{lem:u1lip} and Theorem~\ref{thm:main}. Then,
  \begin{multline*}
    \abs{\E[u_2(F_{f}(f(X))) (Z^\T \Hesst Z - 1)] - \E[u_2(\Phi(\Ze)) (Z^\T \Hesst Z - 1)]} \\
    \leq L_2\big( 2^\frac12 G(\alpha) + (2\pi)^{-\frac12}\alpha \big) \Tr(\Hesst^2)^\frac12\enspace.
  \end{multline*}
\end{lemma}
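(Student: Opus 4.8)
The plan is to control the single remaining difference $\E\big[(u_2(F_f(f(X))) - u_2(\Phi(\Ze)))(Z^\T\Hesst Z - 1)\big]$ by passing from $u_2(F_f(f(X)))$ to $u_2(\Phi(\Ze))$ in two steps that mirror the decomposition $H_N = \Ze + h(Z)$. Since $F_f(f(X)) = F_N(H_N)$, I would insert the intermediate term $u_2(\Phi(H_N))$ and split via $|u_2(F_N(H_N)) - u_2(\Phi(\Ze))| \leq |u_2(F_N(H_N)) - u_2(\Phi(H_N))| + |u_2(\Phi(H_N)) - u_2(\Phi(\Ze))|$: the first difference measures the non-Gaussianity of $H_N$ (replacing the law $F_N$ by $\Phi$), the second measures the curvature perturbation $h(Z)$ (moving the argument from $H_N$ to $\Ze$). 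Throughout I use the moment identities $\E[Z^\T\Hesst Z] = \Tr(\Hesst) = 1$ and $\Var(Z^\T\Hesst Z) = 2\Tr(\Hesst^2)$, so the weight $Z^\T\Hesst Z - 1$ is centered with $\E[(Z^\T\Hesst Z - 1)^2] = 2\Tr(\Hesst^2)$ and, by Jensen, $\E[|Z^\T\Hesst Z - 1|] \leq (2\Tr(\Hesst^2))^{1/2}$; these produce the factor $\Tr(\Hesst^2)^{1/2}$.

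For the first difference, the $L_2$-Lipschitz continuity of $u_2$ (Lemma~\ref{lem:u1lip}) together with the uniform bound $\sup_t|F_N(t) - \Phi(t)| \leq G(\alpha)$ of Lemma~\ref{lem:weak} gives the pointwise estimate $|u_2(F_N(H_N)) - u_2(\Phi(H_N))| \leq L_2 G(\alpha)$. Multiplying by $|Z^\T\Hesst Z - 1|$ and taking expectations, this contributes at most $L_2 G(\alpha)\,\E[|Z^\T\Hesst Z - 1|] \leq L_2\,2^{1/2}G(\alpha)\,\Tr(\Hesst^2)^{1/2}$, which is exactly the first term of the claimed bound.

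For the second difference, I would use that $\Phi$ is $(2\pi)^{-1/2}$-Lipschitz, so $u_2\circ\Phi$ is $L_2(2\pi)^{-1/2}$-Lipschitz, whence $|u_2(\Phi(H_N)) - u_2(\Phi(\Ze))| \leq L_2(2\pi)^{-1/2}|H_N - \Ze| = L_2(2\pi)^{-1/2}|h(Z)|$, where $h(Z) = \tfrac12(\ns/\cm)(Z^\T\Hesst Z - 1)$. Multiplying by $|Z^\T\Hesst Z - 1|$ and taking expectations yields $L_2(2\pi)^{-1/2}\tfrac12(\ns/\cm)\,\E[(Z^\T\Hesst Z - 1)^2] = L_2(2\pi)^{-1/2}(\ns/\cm)\Tr(\Hesst^2)$. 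Writing $(\ns/\cm)\Tr(\Hesst^2) = \big((\ns/\cm)\Tr(\Hesst^2)^{1/2}\big)\Tr(\Hesst^2)^{1/2}$, this equals $L_2(2\pi)^{-1/2}\alpha\,\Tr(\Hesst^2)^{1/2}$ precisely in the regime $(\ns/\cm)\Tr(\Hesst^2)^{1/2} = \alpha \leq 1$, giving the second term of the claim.

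The main obstacle is the complementary regime $(\ns/\cm)\Tr(\Hesst^2)^{1/2} > 1$, i.e.\ $\alpha = 1$, where the displacement $h(Z)$ is no longer small and the pointwise Lipschitz estimate over-counts: it returns the uncapped factor $(\ns/\cm)\Tr(\Hesst^2)^{1/2}$ rather than $\alpha = 1$. One cannot repair this by merely combining the Lipschitz bound with the trivial oscillation bound $|u_2(\Phi(H_N)) - u_2(\Phi(\Ze))| \leq \sup u_2 - \inf u_2$ under absolute values: because $\E[|Z^\T\Hesst Z - 1|]$ is of exact order $\Tr(\Hesst^2)^{1/2}$ but with a constant strictly larger than $(2\pi)^{-1/2}$, any estimate that pulls absolute values inside the expectation loses the required constant. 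Instead I expect to retain the sign structure of the centered weight $Z^\T\Hesst Z - 1$, whose sign coincides with that of $h(Z)$, so that the non-monotonicity of $u_2$ (for instance the U-shaped profile of $w_k^2$ under the optimal weights) generates genuine cancellation in $\E[(u_2(\Phi(H_N)) - u_2(\Phi(\Ze)))(Z^\T\Hesst Z - 1)]$. Converting this cancellation into the clean capped constant $(2\pi)^{-1/2}\alpha$ — for example via a dedicated one-dimensional estimate after conditioning on $\Zee$, or by Gaussian integration by parts against the centered quadratic weight — is the delicate step and the part I expect to consume most of the effort, the Lipschitz-and-moment bookkeeping of the first three steps being comparatively routine.
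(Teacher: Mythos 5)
Your main argument is correct and is essentially the paper's proof. Inserting the intermediate point $u_2(\Phi(H_N))$ (the paper applies the Lipschitz bound of $u_2$ first and then splits $\abs{F_N(H_N)-\Phi(\Ze)} \leq \abs{F_N(H_N)-\Phi(H_N)} + \abs{\Phi(H_N)-\Phi(\Ze)}$, which is the same triangle inequality one level down) yields the identical pointwise estimate $\abs{u_2(F_f(f(X))) - u_2(\Phi(\Ze))} \leq L_2\big(G(\alpha) + (2\pi)^{-1/2}\abs{h(Z)}\big)$, and your two moment computations, $\E[\abs{Z^\T\Hesst Z - 1}] \leq (2\Tr(\Hesst^2))^{1/2}$ and $\E[\abs{h(Z)}\,\abs{Z^\T\Hesst Z - 1}] = \tfrac12(\ns/\cm)\E[(Z^\T\Hesst Z - 1)^2] = (\ns/\cm)\Tr(\Hesst^2)$, are exactly those in the paper. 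This settles the case $(\ns/\cm)\Tr(\Hesst^2)^{1/2} \leq 1$, where $\alpha$ equals that product.

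The gap is your final paragraph: you misdiagnose the complementary regime $(\ns/\cm)\Tr(\Hesst^2)^{1/2} > 1$ (i.e.\ $\alpha = 1$) as the crux, requiring a cancellation argument exploiting the sign structure of $Z^\T\Hesst Z - 1$ and the non-monotonicity of $u_2$; in fact that regime is trivial and needs no such machinery. When $\alpha = 1$ we have $G(1) = \min[1,\,2] = 1$, and since $F_f(f(X))$ and $\Phi(\Ze)$ both lie in $[0,1]$, the Lipschitz property alone gives the pointwise bound $\abs{u_2(F_f(f(X))) - u_2(\Phi(\Ze))} \leq L_2$, so the left-hand side is at most $L_2\,\E[\abs{Z^\T\Hesst Z - 1}] \leq 2^{1/2} L_2 \Tr(\Hesst^2)^{1/2} = 2^{1/2} L_2\, G(1)\, \Tr(\Hesst^2)^{1/2}$, which is already dominated by the \emph{first} term of the claimed bound. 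Your impossibility reasoning (``any estimate that pulls absolute values inside the expectation loses the required constant'') errs by charging the trivial bound against the second term $(2\pi)^{-1/2}\alpha\Tr(\Hesst^2)^{1/2}$ alone; you may instead charge it against the first term, whose constant $2^{1/2}G(1) = 2^{1/2}$ is precisely the constant the trivial bound produces. This is also how the paper disposes of the regime: the proof of Lemma~\ref{lem:weak} states that for $\alpha = 1$ the inequality is trivial, and the paper's own chain for this lemma (like yours) silently uses $(\ns/\cm)\Tr(\Hesst^2) = \alpha\Tr(\Hesst^2)^{1/2}$, which is valid only when $(\ns/\cm)\Tr(\Hesst^2)^{1/2} \leq 1$. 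So your proof becomes complete with a one-line case distinction; the ``delicate step'' you anticipate does not exist.
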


\begin{lemma}\label{lem:u3}
  Let $L_3$, $\alpha$, and $G(\alpha)$ be the quantities appeared in Lemma~\ref{lem:u1lip} and Theorem~\ref{thm:main}. Then,
  \begin{multline*}
    \abs{\E[u_3(F_{f}(f(X)), F_{f}(f(\tilde{X}))) Z^\T\Hesst\tilde{Z}] - \E[u_3(\Phi(\Ze), \Phi(\Zet)) Z^\T\Hesst\tilde{Z}]} \\
    \leq L_3 \big((8 / \pi)^\frac12 G(\alpha) + (2^\frac12 / \pi) \alpha \big) \Tr(\Hesst^2)^\frac12 \enspace.
  \end{multline*}
\end{lemma}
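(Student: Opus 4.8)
The plan is to bound the difference directly, in the same spirit as Lemmas~\ref{lem:u1} and~\ref{lem:u2}, by combining the $L_3$-Lipschitz continuity of $u_3$ from Lemma~\ref{lem:u1lip} with the uniform c.d.f.\ estimate of Lemma~\ref{lem:weak}. Setting $\Delta = u_3(F_f(f(X)), F_f(f(\tilde{X}))) - u_3(\Phi(\Ze), \Phi(\Zet))$, the target is $\abs{\E[\Delta\, Z^\T\Hesst\tilde{Z}]} \le \E[\abs{\Delta}\,\abs{Z^\T\Hesst\tilde{Z}}]$. Recalling $F_f(f(X)) = F_N(H_N)$ with $H_N = \Ze + h(Z)$ and $F_f(f(\tilde{X})) = F_N(\tilde{H}_N)$ with $\tilde{H}_N = \Zet + h(\tilde{Z})$, Lipschitz continuity yields $\abs{\Delta} \le L_3\big(\abs{F_N(H_N) - \Phi(\Ze)} + \abs{F_N(\tilde{H}_N) - \Phi(\Zet)}\big)$, so it suffices to estimate each argument discrepancy.

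For the first, I would split $\abs{F_N(H_N) - \Phi(\Ze)} \le \abs{F_N(H_N) - \Phi(H_N)} + \abs{\Phi(H_N) - \Phi(\Ze)}$. The first summand is at most $G(\alpha)$ by Lemma~\ref{lem:weak}. For the second, the standard normal density is bounded by $(2\pi)^{-1/2}$, so $\Phi$ is $(2\pi)^{-1/2}$-Lipschitz and, since it also takes values in $[0,1]$, $\abs{\Phi(H_N) - \Phi(\Ze)} \le \min\!\big(1, (2\pi)^{-1/2}\abs{h(Z)}\big)$ with $h(Z) = \tfrac12\tfrac{\ns}{\cm}(Z^\T\Hesst Z - 1)$. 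The truncation at $1$ is exactly what lets the final factor be the capped $\alpha = \min(1, (\ns/\cm)\Tr(\Hesst^2)^{1/2})$; in the regime $(\ns/\cm)\Tr(\Hesst^2)^{1/2} \le 1$ the Lipschitz branch is binding and the truncation is inactive.

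The key computation is the expectation after multiplying by $\abs{Z^\T\Hesst\tilde{Z}}$. This is where $u_3$ differs from $u_1$, $u_2$: since $Z$ and $\tilde{Z}$ are independent with $\tilde{Z} \sim \NN(\bm{0}, \eye)$, conditioning on $Z$ gives $Z^\T\Hesst\tilde{Z} \mid Z \sim \NN(0, Z^\T\Hesst^2 Z)$, whence $\E[\abs{Z^\T\Hesst\tilde{Z}} \mid Z] = (2/\pi)^{1/2}(Z^\T\Hesst^2 Z)^{1/2}$ is a half-normal mean. The two $G(\alpha)$ contributions then give $2 G(\alpha)\,\E[\abs{Z^\T\Hesst\tilde{Z}}] \le 2G(\alpha)(2/\pi)^{1/2}\E[(Z^\T\Hesst^2 Z)^{1/2}] \le (8/\pi)^{1/2}G(\alpha)\,\Tr(\Hesst^2)^{1/2}$, the last step by Jensen, $\E[(Z^\T\Hesst^2 Z)^{1/2}] \le (\E[Z^\T\Hesst^2 Z])^{1/2} = \Tr(\Hesst^2)^{1/2}$. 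For the Lipschitz-of-$\Phi$ contributions I would bound $\min(1,\cdot) \le (2\pi)^{-1/2}\abs{h(Z)}$, condition on $Z$ as above, and apply Cauchy--Schwarz with the moments $\E[(Z^\T\Hesst Z - 1)^2] = 2\Tr(\Hesst^2)$ (using $\E[Z^\T\Hesst Z] = \Tr(\Hesst) = 1$) and $\E[Z^\T\Hesst^2 Z] = \Tr(\Hesst^2)$. Each of the two identical $Z$- and $\tilde{Z}$-contributions is then $\le \tfrac{2^{-1/2}}{\pi}(\ns/\cm)\Tr(\Hesst^2) = \tfrac{2^{-1/2}}{\pi}\alpha\,\Tr(\Hesst^2)^{1/2}$ once $(\ns/\cm)\Tr(\Hesst^2)^{1/2}$ is identified with $\alpha$, and summing the two gives the coefficient $(2^{1/2}/\pi)$.

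Adding the two pieces reproduces the stated bound. I expect the main obstacle to be the exact bookkeeping of constants: the half-normal factor $(2/\pi)^{1/2}$, the Lipschitz constant $(2\pi)^{-1/2}$ of $\Phi$, the factor $\tfrac12$ inside $h$, and the variance $2\Tr(\Hesst^2)$ must combine precisely into $(8/\pi)^{1/2}$ for the $G(\alpha)$ term and $(2^{1/2}/\pi)$ for the $\alpha$ term, while the truncation $\min(1,\cdot)$ must be carried so that the conclusion features the capped $\alpha$ rather than $(\ns/\cm)\Tr(\Hesst^2)^{1/2}$. Conceptually, the only genuinely new ingredient beyond Lemmas~\ref{lem:u1} and~\ref{lem:u2} is the conditioning identity turning the cross term $Z^\T\Hesst\tilde{Z}$ into a half-normal of conditional scale $(Z^\T\Hesst^2 Z)^{1/2}$, which is what produces the $\Tr(\Hesst^2)^{1/2}$ scaling.
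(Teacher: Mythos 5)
Your proposal is correct and follows essentially the same route as the paper's proof: the $L_3$-Lipschitz reduction, the decomposition $\abs{F_N(H_N)-\Phi(\Ze)} \leq G(\alpha) + (2\pi)^{-1/2}\abs{h(Z)}$ via Lemma~\ref{lem:weak} and the Lipschitz continuity of $\Phi$, the half-normal conditioning $\E[\abs{Z^\T\Hesst\tilde{Z}}\mid Z] = (2/\pi)^{1/2}\norm{\Hesst Z}$ (your $(Z^\T\Hesst^2 Z)^{1/2}$ is exactly $\norm{\Hesst Z}$), and Jensen/Cauchy--Schwarz with $\E[\norm{\Hesst Z}^2] = \Tr(\Hesst^2)$ and $\E[h(Z)^2] = \tfrac12((\ns/\cm)\Tr(\Hesst^2)^{1/2})^2$, with all constants matching. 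Even your identification of $(\ns/\cm)\Tr(\Hesst^2)^{1/2}$ with the capped $\alpha$ mirrors the paper's own treatment (its Eq.~\eqref{eq:lem:u1:2}), so there is no discrepancy with the published argument.
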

\rev{The asymptotic normalized quality gain $\asynqg$ in \eqref{eq:asynqg} is obtained by replacing the c.d.f.~$F_{f}$ of $f(X)$ in \eqref{eq:main} with the c.d.f.~$\Phi$ of the standard normal distribution. The above lemmas are used to bound the difference between $\bar\phi$ and $\asynqg$.} The following lemma provides the explicit form of each term of \eqref{eq:asynqg}. The proof is straight-forward in light of Lemma~\ref{lem:0} The detail can be found in \ref{apdx:lem:os}. 
\begin{lemma}\label{lem:os}
The functions $u_1$, $u_2$ and $u_3$ defined in Lemma~\ref{lem:0} satisfy the following properties:
\begin{align}
\lambda\E[u_1(\Phi(\Ze)) \Ze] &\textstyle= \sum_{i=1}^{\lambda} w_i\E[\mathcal{N}_{i:\lambda}] \enspace,\label{eq:os:1}\\
\lambda \E[ u_2(\Phi(\Ze)) ] &\textstyle= \sum_{i=1}^\lambda w_i^2  \enspace,\label{eq:os:2}\\
\lambda \E[ u_2(\Phi(\Ze)) (Z^\T \Hesst Z  - 1) ] &\textstyle= \ee^\T\Hesst\ee \sum_{i=1}^\lambda w_i^2 (\E[\NN_{i:\lambda}^2] - 1) \enspace,\label{eq:os:3}\\
\lambda (\lambda - 1)\E[ u_3(\Phi(\Ze), \Phi(\Zet)) Z^\T \Hesst \tilde{Z} ] &\textstyle= 2\ee^\T\Hesst\ee\sum_{k = 1}^{\lambda-1}\sum_{l = k + 1}^{\lambda} w_k w_l \E[\NN_{k:\lambda}\NN_{l:\lambda}] \enspace.\label{eq:os:4}
\end{align}%
\end{lemma}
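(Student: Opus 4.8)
The plan is to exploit the classical representation of the binomial and trinomial probability mass functions as densities of uniform order statistics, combined with the probability-integral transform. Since $\Ze = \ee^\T Z \sim \mathcal{N}(0,1)$, the variable $\Phi(\Ze)$ is uniform on $(0,1)$ and $\Ze = \Phi^{-1}(\Phi(\Ze))$. The key observation is that $\lambda\,\pb(k-1;\lambda-1,p)$ equals the density of the $k$-th order statistic $U_{k:\lambda}$ of $\lambda$ i.i.d.\ uniform $(0,1)$ variables, while $\lambda(\lambda-1)\,\pt(k-1,l-k-1;\lambda-2,p,q-p)$ for $0<p<q<1$ equals the joint density of $(U_{k:\lambda},U_{l:\lambda})$ with $k<l$; this last identity follows by expanding the two binomial coefficients in $\pt$ and matching factorials against $\lambda!/[(k-1)!(l-k-1)!(\lambda-l)!]$. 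Since $\mathcal{N}_{k:\lambda}=\Phi^{-1}(U_{k:\lambda})$ in joint distribution (the increasing map $\Phi^{-1}$ carries uniform order statistics to normal order statistics), every integral against these densities becomes an expectation of $\Phi^{-1}$ at the order statistics, i.e.\ a moment of the normal order statistics.

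For the first two identities I would substitute $u_1$ and $u_2$ from Lemma~\ref{lem:0} and interchange the finite sum with the expectation. For \eqref{eq:os:1}, $\lambda\,\E[\pb(k-1;\lambda-1,\Phi(\Ze))\,\Ze]=\int_0^1 \lambda\,\pb(k-1;\lambda-1,u)\,\Phi^{-1}(u)\,\rmd u=\E[\Phi^{-1}(U_{k:\lambda})]=\E[\mathcal{N}_{k:\lambda}]$, which after weighting by $w_k$ gives the claim. For \eqref{eq:os:2} the integrand carries no $\Phi^{-1}$ factor, so each term integrates the order-statistic density to $1$, leaving $\sum_k w_k^2$.

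The two remaining identities require handling the quadratic forms, which depend on the full $N$-dimensional vectors rather than on $\Ze$ alone. The plan is to use the orthogonal decomposition $Z=\Ze\ee+\Zee$ with $\Ze\perp\Zee$ and $\Zee\sim\mathcal{N}(\bm{0},\eye-\ee\ee^\T)$ (and analogously $\tilde Z=\Zet\ee+\Zeet$), and to condition on the scalars on which $u_2$ or $u_3$ depends. Since $u_2(\Phi(\Ze))$ is $\Ze$-measurable, for \eqref{eq:os:3} I would compute $\E[Z^\T\Hesst Z-1\mid\Ze]$; the cross term vanishes because $\E[\Zee]=\bm{0}$, and using $\E[\Zee^\T\Hesst\Zee]=\Tr(\Hesst(\eye-\ee\ee^\T))=1-\ee^\T\Hesst\ee$ together with the normalization $\Tr(\Hesst)=1$, this conditional expectation collapses to $\ee^\T\Hesst\ee\,(\Ze^2-1)$. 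Substituting and applying the one-dimensional order-statistic integral to $\Ze^2-1$ yields $\E[\mathcal{N}_{k:\lambda}^2]-1$ per term, giving \eqref{eq:os:3}. For \eqref{eq:os:4}, conditioning on $(\Ze,\Zet)$ kills all three mixed terms in $Z^\T\Hesst\tilde Z=\Ze\Zet\,\ee^\T\Hesst\ee+\Ze\,\ee^\T\Hesst\Zeet+\Zet\,\Zee^\T\Hesst\ee+\Zee^\T\Hesst\Zeet$ (the last because $\Zee$ and $\Zeet$ are independent and centered), leaving $\ee^\T\Hesst\ee\,\Ze\Zet$; what remains is $\lambda(\lambda-1)\,\ee^\T\Hesst\ee\,\E[u_3(\Phi(\Ze),\Phi(\Zet))\,\Ze\Zet]$.

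The step I expect to be the main obstacle is the final reduction for \eqref{eq:os:4}, because $u_3$ is built from $\min(p,q)$ and $\abs{q-p}$ and is therefore symmetric in its two arguments, whereas the joint order-statistic density is supported only on $\{u<v\}$. The plan is to write the double integral over $(0,1)^2$ as the sum of its restrictions to $\{u<v\}$ and $\{u>v\}$; on each region the $\min/\abs{\cdot}$ construction reproduces the joint density $f_{U_{k:\lambda},U_{l:\lambda}}$, and since the weight $\Phi^{-1}(u)\Phi^{-1}(v)$ is symmetric the two regions contribute equally, producing the factor $2$ in front of $\sum_{k<l} w_k w_l\,\E[\mathcal{N}_{k:\lambda}\mathcal{N}_{l:\lambda}]$. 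Care is needed only in the factorial bookkeeping establishing the trinomial-to-joint-density identity and in confirming that the diagonal $\{u=v\}$ is null, both of which are routine once set up.
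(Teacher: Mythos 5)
Your proposal is correct and is essentially the paper's own proof: the paper likewise identifies $\lambda\,u_1(\Phi(x))p(x)$ and $\lambda(\lambda-1)\,u_3(\Phi(x),\Phi(y))p(x)p(y)$ with (sums of) order-statistic densities, reduces the quadratic forms via the decomposition $Z=\Ze\ee+\Zee$ with $\E[\Zee]=\bm{0}$ and $\E[\Zee^\T\Hesst\Zee]=1-\ee^\T\Hesst\ee$, and obtains the factor $2$ in \eqref{eq:os:4} by splitting the integral over $\{x<y\}$ and $\{x\geq y\}$ and using symmetry. Your phrasing via uniform order statistics and the transform $\NN_{k:\lambda}=\Phi^{-1}(U_{k:\lambda})$ differs from the paper's direct use of the normal order-statistic densities $p_{k:\lambda}$ only by the change of variables $u=\Phi(x)$, so the two arguments coincide.
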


Now we finalize the proof of the main theorem. Using Lemma~\ref{lem:os}, we can rewrite \eqref{eq:asynqg} as
\begin{multline*}
    \asynqg(\ns, (w_k)_{k=1}^\lambda, \ee_{\xmean}, \Hesst)
    = - \ns \lambda\E[u_1(\Phi(\Ze)) \Ze] - \frac{\ns^2}{2}\lambda \E[ u_2(\Phi(\Ze)) ]
    \\
    - \frac{\ns^2}{2}\lambda \E[ u_2(\Phi(\Ze)) (Z^\T \Hesst Z  - 1) ] 
    -  \frac{\ns^2}{2}\lambda (\lambda - 1)\E[ u_3(\Phi(\Ze), \Phi(\Zet)) Z^\T \Hesst \tilde{Z} ] \enspace.
  \end{multline*}%
From the equation \eqref{eq:main} and the above expression of $\asynqg$, we have
  \begin{align*}
    \MoveEqLeft[3]\bar{\phi}(\xmean, \ns) - \asynqg(\ns, (w_k)_{k=1}^\lambda, \ee_{\xmean}, \Hesst)\\
    =&\textstyle - \ns \lambda  \E\left[(u_1(F_{f}(f(X))) - u_1(\Phi(\Ze))) \Ze\right]\\
     &\textstyle- \frac{\ns^2\lambda}{2} \E[(u_2(F_{f}(f(X))) - u_2(\Phi(\Ze)))]\\
     &\textstyle- \frac{\ns^2\lambda}{2} \E\left[(u_2(F_{f}(f(X))) - u_2(\Phi(\Ze)))\left(Z^{\T} \Hesst Z - 1\right) \right]\\
     &\textstyle- \frac{\ns^2(\lambda - 1)\lambda}{2} \E\big[(u_3(F_{f}(f(X)), F_{f}(f(\tilde X))) - u_3(\Phi(\Ze), \Phi(\Zet))) Z^{\T} \Hesst \tilde Z \big] \enspace.
  \end{align*}%
From the well-known fact (e.g., Theorem~2.1 of \cite{Devroye1986}) that for a random variable $X$ with a continuous c.d.f.~$F_x$ the random variable $F_x(X)$ is uniformly distributed on $[0, 1]$, we can prove both $F_{f}(f(X))$ and $\Phi(\Ze)$ are uniformly distributed on $[0, 1]$. Therefore, we have $\E[u_2(F_{f}(f(X)))] =  \E[u_2(\Phi(\Ze))] = \E[u_2(\mathcal{U}[0, 1])]$, and the second term on the RHS of the above equality is zero. Applying the triangular inequality and Lemma~\ref{lem:u1}, Lemma~\ref{lem:u2}, and Lemma~\ref{lem:u3}, we obtain \eqref{eq:RHS}. 
It completes the proof of Theorem~\ref{thm:main}. 

\section{Consequences}\label{sec:cons}
\providecommand{\bbmw}{\bar{\bm{w}}}

Theorem~\ref{thm:main} tells that if the RHS of \eqref{eq:RHS} is sufficiently small, the normalized quality gain $\bar{\phi}(\bm{m}, \ns)$ is well approximated by $\asynqg(\ns, (w_k)_{k=1}^\lambda, \ee_{\xmean}, \Hesst)$ defined in \eqref{eq:asynqg}. First we investigate the parameter values that are optimal for $\asynqg$. Then, we consider the situations when the RHS of \eqref{eq:RHS} is sufficiently small.

Let $\eone_{(\lambda)}$ be the $\lambda$ dimensional column vector whose $i$-th component is $\E[\NN_{i:\lambda}]$ and $\etwo_{(\lambda)}$ be the $\lambda$ dimensional symmetric matrix whose $(i, j)$-th elements are $\E[\NN_{i:\lambda} \NN_{j:\lambda}]$. Let $\bm{w}$ and $\bbmw$ be the $\lambda$ dimensional column vector whose $i$-th element is $w_i$ and $\ns w_i$, respectively. 
Now \eqref{eq:asynqg} can be written as
\begin{equation}
\asynqg(\bbmw, \ee, \Hesst) 
= - \bbmw^\T \eone_{(\lambda)} - \frac{1}{2}\left(1 - \ee^\T\Hesst\ee\right)\bbmw^\T\bbmw - \frac{1}{2} (\ee^\T\Hesst\ee) \bbmw^\T \etwo_{(\lambda)} \bbmw
\enspace.
\label{eq:asynqg-mat}
\end{equation}%
In the following we use the following asymptotically true approximation for a sufficiently large $\lambda$ (see \eqref{eq:etwoapprox} in \ref{apdx:nos})
\begin{equation}
  \frac{\bbmw^\T \etwo_{(\lambda)}\bbmw}{\lambda \norm{\bbmw}^2}
  \approx \frac{ (\bbmw^\T\eone_{(\lambda)})^2 }{ \norm{\bbmw}^2 \norm{\eone_{(\lambda)}}^2 }
  \approx \frac{ (\bbmw^\T\eone_{(\lambda)})^2 }{ \lambda \norm{\bbmw}^2 } \enspace.
  \label{eq:etwocond}
\end{equation}
By \emph{``for a sufficiently large $\lambda$"}, we mean for a $\lambda$ large enough to approximate \rev{the left hand side (LHS) of \eqref{eq:etwocond} by the right-most side (RMS)}. For a sufficiently large $\lambda$, \eqref{eq:asynqg-mat} is approximated by
\begin{equation}
\asynqg(\bbmw, \ee, \Hesst) 
\approx - \bbmw^\T \eone_{(\lambda)} - \frac{1}{2}\left(1 - \ee^\T\Hesst\ee\right)\bbmw^\T\bbmw - \frac{1}{2} (\ee^\T\Hesst\ee) (\bbmw^\T \eone_{(\lambda)})^2
\enspace.
\label{eq:asynqg-mat2}
\end{equation}%

\subsection{Asymptotic Normalized Quality Gain and Optimal Parameters}
\label{sec:asynqg}

As we mentioned in the previous section, $\asynqg$ in \eqref{eq:asynqg} is different from the normalized quality gain limit $\bar{\phi}_\infty$ in \eqref{eq:nqlim}. Consider the sphere function $\Hesst = \eye / N$; then, since $\ee^\T \Hesst \ee = 1/N$ for any $\ee$ with $\norm{\ee} = 1$, we have
\begin{equation*}
\asynqg(\ns, (w_k), \ee, \Hesst) 
      \\= \bar{\phi}_{\infty}(\ns, (w_k)_{k=1}^\lambda) + \frac{\ns^2}{2N}\sum_{i=1}^\lambda w_i^2 - \frac{\ns^2}{2N} \sum_{i=1}^\lambda\sum_{j=1}^\lambda w_i w_j \E[\NN_{i:\lambda}\NN_{j:\lambda}]
\enspace.
\end{equation*}%
Note that the second and the third terms on the \rev{RHS} are proportional to $1/N$. By taking the limit for $N$ to infinity, we have $\asynqg = \bar{\phi}_{\infty}$. Therefore, the second and third terms describe how the finite dimensional cases are different from the infinite dimensional case. \rev{In particular, the last term prevents the quality gain from scaling up proportionally to $\lambda$ when $\lambda \not \ll N$.} 

\paragraph{Optimal Recombination Weights}
The recombination weights optimal for $\asynqg$ are provided in the following proposition.

\begin{proposition}\label{prop:optw-gen}
The asymptotic normalized quality gain $\asynqg$ \eqref{eq:asynqg} is optimized when $\bbmw$ is the solution to the following linear system of equation\rev{s}
\begin{equation}
  \label{eq:optw-gen}
(\eye + \ee^\T\Hesst\ee(\etwo_{(\lambda)} - \eye)) \bbmw = - \eone_{(\lambda)} \enspace,
\end{equation}%
where $\ns$ and $w_i$ are uniquely determined using the condition $\sum_{i=1}^\lambda \abs{w_i} = 1$. Then the optimal value of $\asynqg$ is $- \frac12 \eone_{(\lambda)}^\T\bbmw^*$ where $\bbmw^*$ \new{is the solution} to the linear system \eqref{eq:optw-gen}. 
\end{proposition}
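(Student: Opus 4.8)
The plan is to exploit the reformulation \eqref{eq:asynqg-mat}, which shows that $\asynqg$ depends on the pair $(\ns, (w_k))$ only through the single vector $\bbmw = \ns\bm{w}$. First I would observe that the rescaling $(\ns, \bm{w}) \mapsto (\ns/t, t\bm{w})$ leaves $\bbmw$, and hence $\asynqg$, unchanged for every $t > 0$. Consequently, maximizing $\asynqg$ over $(\ns, (w_k))$ subject to $\sum_{i=1}^{\lambda}\abs{w_i} = 1$ is equivalent to the \emph{unconstrained} maximization of $\asynqg$ regarded as a function of $\bbmw \in \R^\lambda$: the normalization merely selects one representative along each scaling ray. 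Once a maximizer $\bbmw^*$ is obtained, the normalization recovers the two quantities uniquely via $\ns^* = \sum_{i=1}^{\lambda}\abs{[\bbmw^*]_i}$ and $w_i^* = [\bbmw^*]_i / \ns^*$, provided $\bbmw^* \neq \bm{0}$.

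Next I would cast \eqref{eq:asynqg-mat} as a quadratic form in $\bbmw$. Collecting the two quadratic terms gives
\begin{equation*}
\asynqg(\bbmw, \ee, \Hesst) = - \eone_{(\lambda)}^\T \bbmw - \tfrac12 \bbmw^\T \mathbf{M} \bbmw, \qquad \mathbf{M} := \eye + (\ee^\T\Hesst\ee)(\etwo_{(\lambda)} - \eye) = (1 - \ee^\T\Hesst\ee)\eye + (\ee^\T\Hesst\ee)\etwo_{(\lambda)}.
\end{equation*}
If $\mathbf{M}$ is positive definite, then $\asynqg$ is strictly concave in $\bbmw$, so its unique stationary point is the global maximizer. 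Setting the gradient $\nabla_{\bbmw}\asynqg = -\eone_{(\lambda)} - \mathbf{M}\bbmw$ to zero yields $\mathbf{M}\bbmw = -\eone_{(\lambda)}$, which is precisely \eqref{eq:optw-gen}. Substituting $\mathbf{M}\bbmw^* = -\eone_{(\lambda)}$ back into the objective gives the optimal value,
\begin{equation*}
\asynqg(\bbmw^*, \ee, \Hesst) = -\eone_{(\lambda)}^\T\bbmw^* - \tfrac12 (\bbmw^*)^\T \mathbf{M}\bbmw^* = -\eone_{(\lambda)}^\T\bbmw^* + \tfrac12\eone_{(\lambda)}^\T\bbmw^* = -\tfrac12\eone_{(\lambda)}^\T\bbmw^*,
\end{equation*}
as claimed.

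The main obstacle, and the only step beyond routine manipulation, is establishing $\mathbf{M} \succ 0$, which guarantees both that the stationary point is a genuine maximum and that $\bbmw^* = -\mathbf{M}^{-1}\eone_{(\lambda)}$ is well defined. For this I would use two structural facts. Since $\Hesst \succeq 0$ with $\Tr(\Hesst)=1$ and $\norm{\ee}=1$, the Rayleigh quotient satisfies $0 \leq \ee^\T\Hesst\ee \leq \eig_1(\Hesst) \leq \Tr(\Hesst) = 1$, so $\mathbf{M}$ is a combination of $\eye$ and $\etwo_{(\lambda)}$ with nonnegative coefficients summing to $1$. Moreover, $\etwo_{(\lambda)} = \E[\bm{v}\bm{v}^\T]$ with $\bm{v} = (\NN_{1:\lambda},\dots,\NN_{\lambda:\lambda})^\T$ is a second-moment (Gram) matrix, hence positive semi-definite, and in fact positive definite because no nontrivial deterministic linear combination of the order statistics vanishes (their joint density is strictly positive on the ordered region). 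Therefore $\mathbf{M}$ is positive definite in both regimes: if $\ee^\T\Hesst\ee < 1$ the term $(1-\ee^\T\Hesst\ee)\eye$ is positive definite, while if $\ee^\T\Hesst\ee = 1$ then $\mathbf{M} = \etwo_{(\lambda)} \succ 0$. This also ensures invertibility, and since $\eone_{(\lambda)} \neq \bm{0}$ the maximizer $\bbmw^* = -\mathbf{M}^{-1}\eone_{(\lambda)}$ is nonzero, which validates the recovery of $\ns^*$ and $(w_k^*)$ from $\bbmw^*$.
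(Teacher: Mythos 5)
Your proposal is correct and takes essentially the same approach as the paper: differentiating the quadratic form \eqref{eq:asynqg-mat} with respect to $\bar{\bm{w}}$ and setting the gradient to zero yields \eqref{eq:optw-gen}, and back-substituting $\mathbf{M}\bar{\bm{w}}^* = -\eone_{(\lambda)}$ gives the optimal value $-\tfrac12\eone_{(\lambda)}^\T\bar{\bm{w}}^*$. You in fact go beyond the paper's one-line proof, which records only the first-order condition: your verification that $\mathbf{M} = (1-\ee^\T\Hesst\ee)\eye + (\ee^\T\Hesst\ee)\etwo_{(\lambda)}$ is positive definite (via $0 \leq \ee^\T\Hesst\ee \leq \Tr(\Hesst) = 1$ and $\etwo_{(\lambda)} \succ 0$) supplies the concavity and solvability justification that the paper leaves implicit, ensuring the stationary point is the unique global maximizer.
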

\begin{proof}
  We obtain \eqref{eq:optw-gen} by taking the derivative of \eqref{eq:asynqg-mat} with respect to $\bbmw$, and \rev{requiring} $\partial \asynqg(\bbmw, \ee, \Hesst) / \partial [\bbmw]_i = 0$. This ends the proof.
\end{proof}

First, consider the limit for $N \to \infty$ while $\lambda$ is fixed. As long as the largest eigenvalue $d_1(\Hess)$ of $\Hesst$ converges to zero as $N \to \infty$, i.e., $\lim_{N\to \infty} d_1(\Hess) = 0$, we have $\ee^\T \Hesst \ee \to 0$ as $N \to 0$. Then, \eqref{eq:optw-gen} reads $\bbmw = - \eone_{(\lambda)}$. Therefore, we have the same optimal recombination weights as the ones derived for the infinite dimensional sphere function. 

Next, consider a finite dimensional case. If $\lambda$ is sufficiently large, the optimality condition \eqref{eq:optw-gen} is approximated by 
\begin{equation*}
\left(\frac{1}{\lambda} \left(1 - \frac{1}{\ee^\T \Hesst \ee}\right)\eye + \frac{1}{\ee^\T \Hesst \ee}\frac{\eone_{(\lambda)}\eone_{(\lambda)}^\T}{ \norm{\eone_{(\lambda)}}^2 }\right)\bbmw = -\frac{\eone_{(\lambda)}}{\lambda} \enspace.
\end{equation*}%
The solution to the above approximated condition is given by $\bbmw \propto - \eone_{(\lambda)}$ independently of $\Hesst$ and $\ee$. It means, for a sufficiently large $\lambda$, the optimal recombination weights are approximated by the weights \eqref{eq:optw} optimal for the infinite dimensional sphere function.

\paragraph{Optimal Normalized Step-Size}
The optimal $\ns$ under a given $(w_k)_{k=1}^\lambda$ is provided in the following proposition.
\begin{proposition}
Given $\bm{w} = (w_1, \dots, w_\lambda)$, the asymptotic normalized quality gain \eqref{eq:asynqg} is maximized when the normalized step-size $\ns$ is
\begin{equation}
    \ns^* = \frac{- \sum_{i=1}^{\lambda} w_i\E[\mathcal{N}_{i:\lambda}]}{\sum_{i=1}^\lambda w_i^2\left(1 - \ee_{\xmean}^\T\Hesst\ee_{\xmean}\right) + \ee_{\xmean}^\T\Hesst\ee_{\xmean}\sum_{i=1}^\lambda\sum_{j=1}^\lambda w_i w_j \E[\NN_{i:\lambda}\NN_{j:\lambda}]}
    \enspace,
\label{eq:optns-gen}
\end{equation}%
then $\asynqg(\ns^*, (w_k), \ee, \Hesst) = \frac{\ns^*}{2} \big(- \sum_{i=1}^{\lambda} w_i\E[\mathcal{N}_{i:\lambda}]\big)$.
\end{proposition}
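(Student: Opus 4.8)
The plan is to recognize that, for fixed weights $\bm{w}$, fixed gradient direction $\ee_{\xmean}$, and fixed Hessian $\Hesst$, the expression \eqref{eq:asynqg} is nothing but a quadratic polynomial in the single scalar variable $\ns$. Abbreviating the linear coefficient by $a := - \sum_{i=1}^{\lambda} w_i\E[\mathcal{N}_{i:\lambda}]$ and the quadratic coefficient by
\[
b := \sum_{i=1}^\lambda w_i^2\bigl(1 - \ee_{\xmean}^\T\Hesst\ee_{\xmean}\bigr) + \ee_{\xmean}^\T\Hesst\ee_{\xmean}\sum_{i=1}^\lambda\sum_{j=1}^\lambda w_i w_j \E[\NN_{i:\lambda}\NN_{j:\lambda}] \enspace,
\]
equation \eqref{eq:asynqg} reads $\asynqg(\ns) = a\,\ns - \tfrac{b}{2}\ns^2$, and $b$ is exactly the denominator appearing in \eqref{eq:optns-gen}. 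The proposition then reduces to elementary one-dimensional optimization of this parabola.

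First I would establish that $b > 0$, which guarantees that the parabola is strictly concave and thus possesses a unique maximizer. Writing $c := \ee_{\xmean}^\T\Hesst\ee_{\xmean}$, the nonnegative definiteness of $\Hesst$ together with the normalization $\Tr(\Hesst) = 1$ gives $0 \le c \le \eig_1(\Hesst) \le 1$, so $1 - c \ge 0$. The matrix $\etwo_{(\lambda)}$ with entries $\E[\NN_{i:\lambda}\NN_{j:\lambda}]$ satisfies $v^\T\etwo_{(\lambda)}v = \E\bigl[\bigl(\sum_{i=1}^\lambda v_i\NN_{i:\lambda}\bigr)^2\bigr] \ge 0$ for every $v$, with strict inequality for $v \neq \bm{0}$ because the order statistics $\NN_{1:\lambda} < \cdots < \NN_{\lambda:\lambda}$ are almost surely distinct and hence linearly independent as random variables; thus $\etwo_{(\lambda)}$ is positive definite and $\bm{w}^\T\etwo_{(\lambda)}\bm{w} > 0$ for $\bm{w}\neq\bm{0}$. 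Consequently $b = (1-c)\norm{\bm{w}}^2 + c\,\bm{w}^\T\etwo_{(\lambda)}\bm{w}$ is a nonnegative combination of two nonnegative terms and is strictly positive whenever $\bm{w}\neq\bm{0}$.

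With concavity in hand, I would locate the unique stationary point by solving $\asynqg'(\ns) = a - b\,\ns = 0$, giving $\ns^* = a/b$, which is precisely \eqref{eq:optns-gen}. Substituting $\ns^* = a/b$ back into $\asynqg(\ns) = a\,\ns - \tfrac{b}{2}\ns^2$ then yields
\[
\asynqg(\ns^*) = \frac{a^2}{b} - \frac{b}{2}\frac{a^2}{b^2} = \frac{a^2}{2b} = \frac{a}{2}\,\ns^* = \frac{\ns^*}{2}\Bigl(- \sum_{i=1}^{\lambda} w_i\E[\mathcal{N}_{i:\lambda}]\Bigr) \enspace,
\]
the claimed optimal value. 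The only step requiring genuine care is the strict positivity of $b$ (equivalently, the positive definiteness of $\etwo_{(\lambda)}$); everything else is a routine vertex-of-a-parabola computation. Since positive definiteness of $\etwo_{(\lambda)}$ is already implicit in the surrounding order-statistics analysis, this obstacle is minor, and the proof is essentially immediate once the quadratic-in-$\ns$ structure is recognized.
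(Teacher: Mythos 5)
Your proof is correct and follows essentially the same route as the paper, which simply differentiates \eqref{eq:asynqg} with respect to $\ns$ and solves $\partial\asynqg/\partial\ns = 0$ to obtain the vertex of the parabola. Your additional verification that the quadratic coefficient $b$ is strictly positive (via positive definiteness of $\etwo_{(\lambda)}$), ensuring the stationary point is indeed the maximizer, is a worthwhile refinement that the paper leaves implicit, but it does not change the nature of the argument.
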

\begin{proof}
It is a straight-forward consequence from differentiating \eqref{eq:asynqg} with respect to $\ns$ and solving $\partial \asynqg / \partial \ns = 0$.
\end{proof}

For a sufficiently large $\lambda$ (see \eqref{eq:etwocond}), one can rewrite and approximate \eqref{eq:optns-gen} as
\begin{align}
    \ns^*
    &= \frac{- \bm{w}^\T \eone_{(\lambda)}}{\left(1 - \ee_{\xmean}^\T\Hesst\ee_{\xmean}\right) \norm{\bm{w}}^2 + \ee_{\xmean}^\T\Hesst\ee_{\xmean} \bm{w}^\T\etwo_{(\lambda)}\bm{w}} \notag\\
  &\approx \frac{(\ee_\xmean^\T \Hesst \ee_\xmean)^{-1} \muw (- \bm{w}^\T \eone_{(\lambda)})}{(\ee_\xmean^\T \Hesst \ee_\xmean)^{-1} - 1  + \muw (- \bm{w}^\T \eone_{(\lambda)})^2 }  \enspace.
  \label{eq:optsig-approx}
\end{align}%
%
Note again that $-\bm{w}^\T \eone_{(\lambda)} = -\sum_{i=1}^{\lambda} w_i \E[\mathcal{N}_{i:\lambda}] \in O(1)$  for the optimal weights, CMA-type non-negative weights, and truncation weights with fixed truncation ratio. To provide a better insight, consider the case of the sphere function ($\Hess = \eye/N$). Then, the RMS of \eqref{eq:optsig-approx} reads
\begin{equation}
    \ns^*
    \approx \frac{N \muw (- \bm{w}^\T \eone_{(\lambda)})}{N - 1  + \muw (- \bm{w}^\T \eone_{(\lambda)})^2 }
  \enspace.
  \label{eq:optsig-largelambda}
\end{equation}%
Then, we find the following: (\new{i}\del{1}) if $N \gg \muw$, then $\ns^* \approx \muw (-\bm{w}^\T\eone_{(\lambda)})$ and $\asynqg \approx \muw(-\bm{w}^\T\eone_{(\lambda)})^2 / 2$; (\new{ii}\del{2}) if $\muw \gg N$, then $\ns^* \approx N / (-\bm{w}^\T\eone_{(\lambda)})$ and $\asynqg \approx N/ 2$.
Figure~\ref{fig:cmlargeopts} visualizes the optimal normalized step-size \new{\eqref{eq:optns-gen}} for various $\bm{w}$ on the sphere function. The optimal normalized step-size \eqref{eq:optns-gen} scales linearly for $\lambda \leq N$ and it tends to level out for $\lambda > N$. 

\begin{figure}[!t]
  \centering
  \begin{subfigure}{0.5\linewidth}
      \centering
    \includegraphics[width=0.95\hsize]{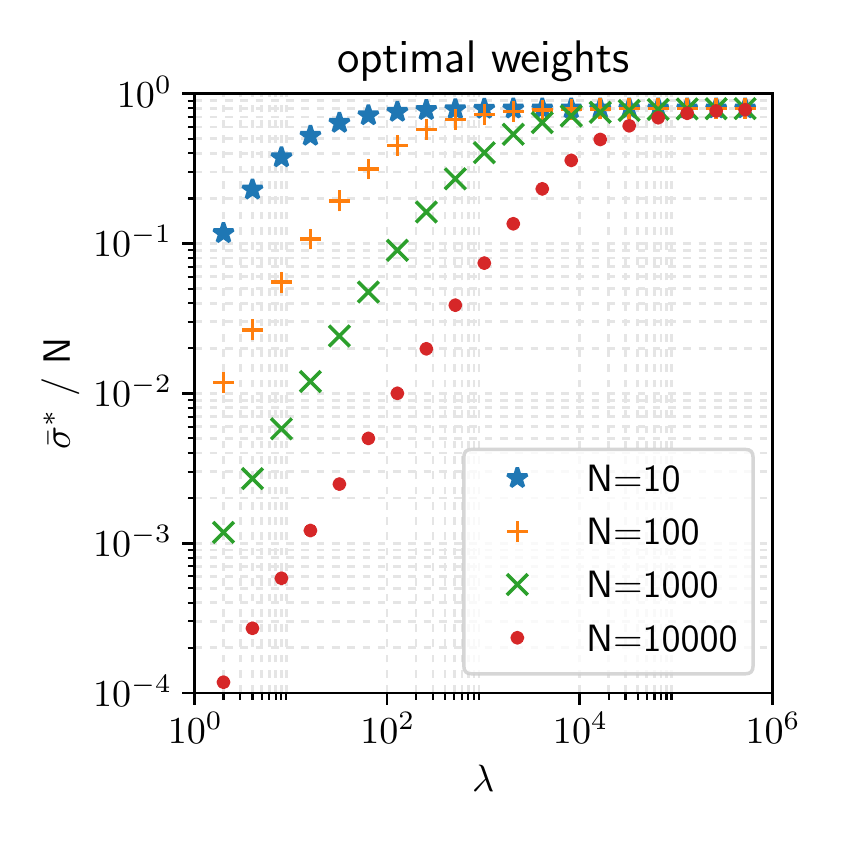}
  \end{subfigure}%
  \begin{subfigure}{0.5\linewidth}
      \centering
    \includegraphics[width=0.95\hsize]{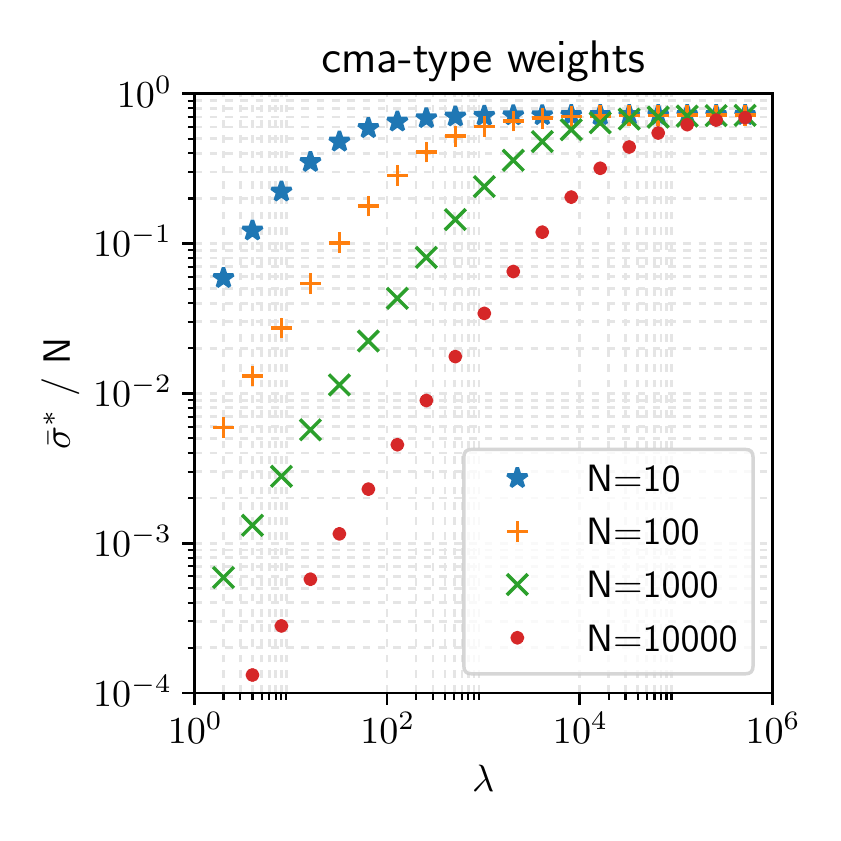}
  \end{subfigure}%
  \\
  \begin{subfigure}{0.5\linewidth}
      \centering
    \includegraphics[width=0.95\hsize]{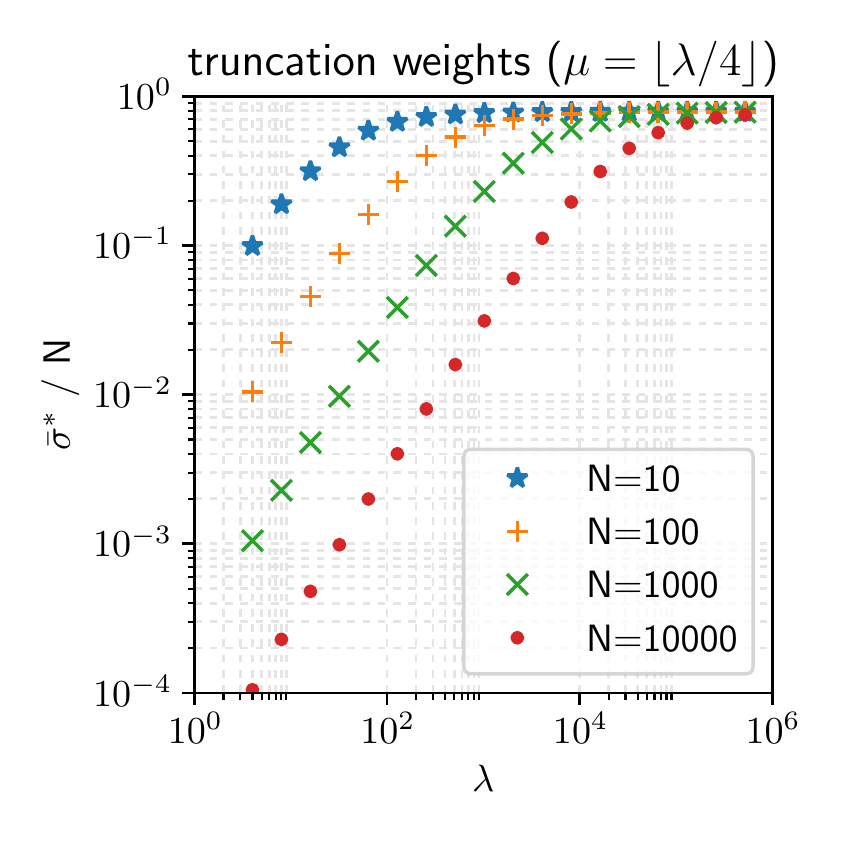}
  \end{subfigure}%
  \begin{subfigure}{0.5\linewidth}
      \centering
    \includegraphics[width=0.95\hsize]{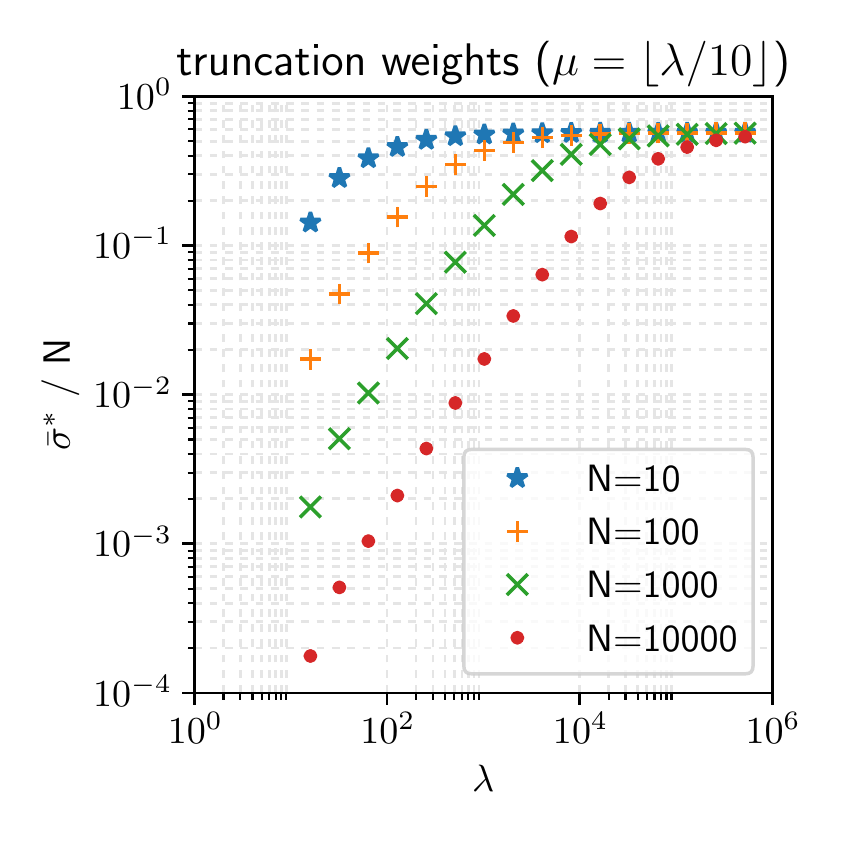}
  \end{subfigure}%
  \caption{Optimal normalized step-size on $N = 10$, $100$, $1000$, $10000$ dimensional sphere function for different weight schemes and different population size $\lambda$.}  
  \label{fig:cmlargeopts}
\end{figure}

\paragraph{Geometric Interpretation of the Optimal Situation}

On the infinite dimensional sphere function, we know that the optimal step-size puts the algorithm in the situation where $f(\xmean)$ improves twice as much by $\xmean$ moving towards the optimum as it deteriorates by $\xmean$ moving randomly in the subspace orthogonal to the gradient direction \cite{Rechenberg1994}. On a finite dimensional convex quadratic function, we find the analogous result. From \eqref{eq:main} and lemmas in Section~\ref{sec:proof}, the first term of the asymptotic normalized quality gain \eqref{eq:asynqg}, i.e.\ $- \ns \sum_{i=1}^{\lambda} w_i \E[\NN_{i:\lambda}]$, is due to the movement of $\xmean$ \rev{in}\del{ the} negative gradient direction, and the second and third terms are due to the random walk in the orthogonal subspaces\footnote{More precisely, the second and the third terms come from the quadratic term in \eqref{eq:fdiff} that contains the information in the gradient direction as well. However, the above statement is true in the limit $N \to \infty$ as long as $\Tr(\Hesst^2) \to 0$.}. The asymptotic normalized quality gain is maximized when the normalized step-size is set such that the first term is \rev{twice as large as} the absolute value of the sum of the second and the third terms. That is, the amount of the decrease of $f(\xmean)$ by $\xmean$ moving into the negative gradient direction is twice greater than the increase of $f(\xmean)$ by $\xmean$ moving in its orthogonal subspace.

\subsection{Infinitesimal Step-Size Case}\label{sec:cm}
The RHS of \eqref{eq:RHS}, the error bound between $\bar{\phi}$ and $\asynqg$, converges to zero when $\alpha \to 0$. One such situation is the limit of $\sigma / \norm{\xmean} \to 0$ while $\cm \sigma$ can remain positive, i.e., in \new{the} \emph{mutate small, but inherit large} situation, which is formally stated in the next corollary. 
\begin{corollary}\label{cor:sig}
For any positive constant $\mathrm{C} > 0$, 
\begin{equation}
  \lim_{\sigma/\norm{\xmean} \to 0} \sup_{\ns \in (0, \mathrm{C}]}\sup_{\xmean \in \R^N\setminus\{\bm{0}\}}\frac{1}{\ns}\abs*{\bar{\phi}(\xmean, \ns) - \asynqg(\ns, (w_k), \ee_{\xmean}, \Hesst) }
  = 0
  \label{eq:cor:sig}
\end{equation}
\end{corollary}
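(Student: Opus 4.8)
The plan is to reduce the statement to the error bound of Theorem~\ref{thm:main} and to track how that bound behaves as $\alpha\to 0$. First I would observe that the right-hand side of \eqref{eq:RHS} does not depend on $\xmean$: the Lipschitz constants $L_1,L_2,L_3$ depend only on $\lambda$ and $(w_k)$, and both $\alpha = \min(1,(\ns/\cm)\Tr(\Hesst^2)^{\frac12})$ and $G(\alpha)$ depend only on $\ns/\cm$ and $\Hesst$. Dividing \eqref{eq:RHS} by $\ns$ therefore gives, for every $\xmean\in\R^N\setminus\{\bm{0}\}$,
\begin{multline*}
\frac{1}{\ns}\abs*{\bar{\phi}(\xmean,\ns) - \asynqg(\ns,(w_k),\ee_{\xmean},\Hesst)}
\leq \lambda L_1\big((2/\pi)^{\frac12}G(\alpha) + (4\pi)^{-\frac12}\alpha\big)\\
+ \lambda L_2\big(2^{-\frac12}G(\alpha) + (8\pi)^{-\frac12}\alpha\big)(\cm\alpha)
+ \lambda(\lambda-1)L_3\big((2/\pi)^{\frac12}G(\alpha) + (2\pi^2)^{-\frac12}\alpha\big)(\cm\alpha),
\end{multline*}
a bound uniform in $\xmean$. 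Hence the inner supremum over $\xmean$ may be discarded, and it remains to take $\sup_{\ns\in(0,\mathrm{C}]}$ of this $\xmean$-free expression and pass to the limit.

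Next I would make the limit $\sigma/\norm{\xmean}\to 0$ precise. Since $\sigma = (\ns/\cm)\norm{\nabla f(\xmean)}$, keeping $\cm\sigma = \ns\norm{\nabla f(\xmean)}$ positive while $\sigma\to 0$ is exactly the regime $\cm\to\infty$ with $\ns$ bounded; there $\ns/\cm = \sigma/\norm{\nabla f(\xmean)}\to 0$. Consequently $\alpha \leq (\ns/\cm)\Tr(\Hesst^2)^{\frac12} \leq (\mathrm{C}/\cm)\Tr(\Hesst^2)^{\frac12}$, which tends to $0$ as $\cm\to\infty$ \emph{uniformly} over $\ns\in(0,\mathrm{C}]$. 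So it suffices to prove that the displayed right-hand side tends to $0$ as $\alpha\to 0$.

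The crucial bookkeeping concerns the two terms carrying the factor $\cm\alpha$. These do \emph{not} vanish through $\cm\to\infty$ alone; indeed $\cm\alpha \leq \ns\Tr(\Hesst^2)^{\frac12} \leq \mathrm{C}\,\Tr(\Hesst^2)^{\frac12}$ stays bounded but is generally bounded away from $0$. The point is that each such term also carries a companion factor $G(\alpha)+\alpha$ (up to constants), and this factor vanishes: from \eqref{eq:galpha} one has $G(\alpha) \leq \alpha\big(2 + (2/\pi)^{\frac12}(\ln(1/\alpha))^{\frac12} + (2\pi)^{-\frac12}\eig_1(\Hesst)\Tr(\Hesst^2)^{-\frac12}\ln(1/\alpha)\big)\to 0$ as $\alpha\to 0^+$, because both $\alpha\ln(1/\alpha)$ and $\alpha(\ln(1/\alpha))^{\frac12}$ tend to $0$. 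Thus the first summand vanishes since $G(\alpha)+\alpha\to 0$, while the second and third vanish as products of the bounded factor $\cm\alpha$ with the vanishing factor $G(\alpha)+\alpha$. All estimates being uniform in $\ns$, the supremum over $\ns\in(0,\mathrm{C}]$ is preserved, which establishes \eqref{eq:cor:sig}.

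I expect the main obstacle to be precisely this control of the $\cm\alpha$ terms: the naive reading that ``$\cm\to\infty$ kills the bound'' is false, and one must exploit that \eqref{eq:RHS} is organized so that every explosive factor $\cm$ is paired both with a factor $\alpha$ (making $\cm\alpha$ controlled by $\ns\Tr(\Hesst^2)^{\frac12}$) and with the vanishing factor $G(\alpha)+\alpha$. A secondary technical point is to justify that ``$\sigma/\norm{\xmean}\to 0$ with $\cm\sigma$ positive'' genuinely forces $\alpha\to 0$ uniformly in $\ns$ and $\xmean$; this follows because $\alpha$ is independent of $\xmean$ and is bounded by $(\mathrm{C}/\cm)\Tr(\Hesst^2)^{\frac12}$.
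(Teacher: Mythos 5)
Your proposal is correct and follows essentially the same route as the paper's proof: apply Theorem~\ref{thm:main}, divide the bound \eqref{eq:RHS} by $\ns$, observe that $\cm\alpha = \ns\Tr(\Hesst^2)^{\frac12} \leq \mathrm{C}\,\Tr(\Hesst^2)^{\frac12}$ stays bounded while the companion factor $G(\alpha)+\alpha \in O(\alpha\ln(1/\alpha))$ vanishes, and note that $\alpha \to 0$ uniformly in $\ns$ and $\xmean$ in the stated limit. The only cosmetic difference is that the paper relates $\sigma/\norm{\xmean}\to 0$ to $\alpha \to 0$ directly via the inequality $\alpha \leq (\sigma/\norm{\xmean})\,\Tr(\Hesst^2)^{\frac12}/\eig_N(\Hess)$, whereas you pass through the equivalent reformulation $\cm \to \infty$; both give the same uniform control.
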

\begin{proof}
Note that the function $G(\alpha) \in \mathcal{O}(\alpha\ln(1/\alpha))$ as $\alpha \to 0$. Then, \eqref{eq:RHS} reads
\begin{multline}
\sup_{\xmean \in \R^N\setminus\{\bm{0}\}}\abs*{\bar{\phi}(\xmean, \ns) - \asynqg(\ns, (w_k), \ee_{\xmean}, \Hesst) }
\\\in \ns \lambda \mathcal{O}(\alpha\ln(1/\alpha)) \bigg[ L_1 + \cm \alpha L_2 + \cm \alpha (\lambda - 1) L_3 \bigg]
\enspace.
\label{eq:RHS-o}
\end{multline}%
Note also that
\begin{equation*}
\alpha = \frac{\ns}{\cm} \Tr(\Hesst^2)^\frac12 = \frac{\sigma \Tr(\Hess^2)^\frac12}{\norm{\nabla f(\xmean)}} \leq \frac{\sigma}{\norm{\xmean}}\frac{\Tr(\Hesst^2)^\frac12}{d_N(\Hess)}
\end{equation*}%
and $\alpha \cm = \ns \Tr(\Hesst^2)^\frac12 \leq C \Tr(\Hesst^2)^\frac12$. It implies that the RHS of \eqref{eq:RHS-o} divided by $\ns$ is in $O(\alpha\ln(1/\alpha)) \subseteq o(\alpha^{1-\epsilon})$ for any $\epsilon > 0$ under the condition $\ns \leq \mathrm{C}$. Since $\alpha \to 0$ as $\sigma / \norm{\xmean} \to 0$, \eqref{eq:RHS-o} implies \eqref{eq:cor:sig}.
\end{proof}

If $\cm$ is fixed, we have $\ns \to 0$ as $\sigma/\norm{\xmean} \to 0$. Then, the asymptotic normalized quality gain \eqref{eq:asynqg} converges towards zero as the bound on the RHS of \eqref{eq:RHS} goes to zero. The above corollary tells that the bound converges faster than $\ns$ does, while the asymptotic normalized quality gain decreases linearly in $\ns$. As a consequence, we find that the normalized quality gain approaches $- \ns \sum_{i=1}^{\lambda} w_i\E[\mathcal{N}_{i:\lambda}]$ as $\sigma/\norm{\xmean} \to 0$.

Consider the case that $\ns$ is fixed, i.e., $\cm \sigma$ is fixed. Then, from the corollary we find that the normalized quality gain converges towards $\asynqg$ in \eqref{eq:asynqg} as $\sigma/\norm{\xmean} \to 0$.

Taking $\cm \to \infty$ we obtain $\bar{\phi} \to \asynqg$. \new{Though resembling a numerical gradient estimation when $\sigma\to0$,}\del{However} it is not quite practical to take a large $\cm$. Indeed we \new{usually rather} do the opposite. For noisy optimization, the idea of \emph{rescaled mutations} (corresponding to a large $\sigma$ and a small $\cm$) that was proposed by A.~Ostermeier in 1993 (according to \cite{Rechenberg1994}) and analyzed in \cite{Bey98c,Arnold2006} is introduced to reduce the noise-to-signal ratio. If neither $\cm \not \gg 1$ nor $N \not \gg 1$, the RHS of \eqref{eq:RHS} will not be small enough to approximate the normalized quality gain by $\asynqg$ in \eqref{eq:asynqg}. Then the normalized step-size defined in \eqref{eq:optns-gen} is not guaranteed to provide an approximation of the optimal normalized step-size. 
However, in practice, we observe that the normalized step-size defined in \eqref{eq:optns-gen} provides a reasonable approximation of the optimal normalized step-size for $\cm \geq 1$. We will see it in Figure~\ref{fig:opt}.


\subsection{Infinite Dimensional Case}\label{sec:infn}

The other situation \new{when}\del{that} $\alpha \to 0$ occurs is the limit $N \to \infty$ under the condition $\lim_{N\to\infty}\new{\Tr(\Hesst^2)}\del{\Tr(\Hesst^2)^\frac12}{} = 0$.\todo{I am not sure I understand why we have the power of $\frac12$ here. At least it seems neither necessary nor notationally consistent with previous claims.} In this case, since $\ee^\T\Hesst\ee \leq \Tr(\Hesst^2)^\frac12 \to 0$, the limit expression $\asynqg$ converges to $\bar{\phi}_{\infty}$, i.e., the same limit on the sphere function. It is stated in the following corollary, which is a generalization of the result obtained in \cite{Arnold2005foga} from the sphere function to a general convex quadratic function. 

\begin{corollary}\label{cor:hess}
Let $(\Hess_N)_{N=1}^{\infty}$ be the sequence of nonnegative definite matrices satisfying $\lim_{N\to\infty} \Tr(\Hess^2)^\frac12 = 0$. Then, 
\begin{equation}
\lim_{N\to\infty} \sup_{\xmean\in\R^N\setminus\{\bm{0}\}} \abs*{\asynqg(\ns, (w_k)_{k=1}^\lambda, \ee, \Hesst_N) - \bar{\phi}_\infty(\ns, (w_k)_{k=1}^\lambda)} = 0 
\enspace,
\label{eq:cor:hess:1}
\end{equation}%
where $\bar{\phi}_\infty(\ns, (w_k)_{k=1}^\lambda) = - \ns \sum_{i=1}^\lambda w_i \E[\mathcal{N}_{i:\lambda}] - \ns^2 / (2 \muw)$ as defined in \eqref{eq:nqlim}. 
Moreover,
\begin{equation}
\lim_{N\to\infty}\sup_{\ns \in (0, \mathrm{C}]}\sup_{\xmean \in \R^N\setminus\{\bm{0}\}}\abs*{\bar{\phi}(\xmean, \ns) - \bar{\phi}_\infty(\ns, (w_k)_{k=1}^\lambda) }
= 0
\enspace.
\label{eq:cor:hess:2}
\end{equation}%
\end{corollary}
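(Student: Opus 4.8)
The plan is to prove the two claims in sequence, using the first to obtain the second via the triangle inequality. For \eqref{eq:cor:hess:1}, I would start from the explicit form \eqref{eq:asynqg} of $\asynqg$ and the definition $\bar{\phi}_\infty = - \ns \sum_{i} w_i \E[\NN_{i:\lambda}] - \ns^2/(2\muw)$, and note that the two expressions share the linear term $- \ns \sum_i w_i \E[\NN_{i:\lambda}]$ exactly. Hence the difference consists only of the quadratic-in-$\ns$ pieces, and after recalling $\muw = 1/\sum_i w_i^2$ one finds
\begin{equation*}
\asynqg - \bar{\phi}_\infty
= \frac{\ns^2}{2}\,(\ee^\T\Hesst\ee)\left(\sum_{i=1}^\lambda w_i^2 - \sum_{i=1}^\lambda\sum_{j=1}^\lambda w_i w_j \E[\NN_{i:\lambda}\NN_{j:\lambda}]\right)\enspace.
\end{equation*}
The bracketed factor is a fixed finite constant depending only on $(w_k)$ and $\lambda$, independent of $\xmean$ and $N$. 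Therefore the whole difference is controlled by $\ee^\T\Hesst\ee$, uniformly in $\xmean$.

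The key estimate is then the uniform bound $\sup_{\xmean\neq\bm{0}}\ee_\xmean^\T\Hesst\ee_\xmean \leq \Tr(\Hesst^2)^{1/2}$, which holds because $\ee$ is a unit vector and $\ee^\T\Hesst\ee \leq \norm{\Hesst\ee} \leq \norm{\Hesst}_{\mathrm{op}} \leq (\sum_i d_i(\Hess)^2)^{1/2} = \Tr(\Hesst^2)^{1/2}$ (the operator norm is bounded by the Frobenius norm for a symmetric matrix). Taking the supremum over $\xmean$ and the limit $N\to\infty$ under the hypothesis $\lim_{N\to\infty}\Tr(\Hess^2)=0$ makes this factor vanish, establishing \eqref{eq:cor:hess:1}.

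For \eqref{eq:cor:hess:2}, I would combine the main Theorem~\ref{thm:main} with the just-proved \eqref{eq:cor:hess:1} through the triangle inequality
\begin{equation*}
\abs{\bar{\phi} - \bar{\phi}_\infty} \leq \abs{\bar{\phi} - \asynqg} + \abs{\asynqg - \bar{\phi}_\infty}\enspace,
\end{equation*}
taking the supremum over $\xmean$ and over $\ns\in(0,\mathrm{C}]$. The second term is handled by \eqref{eq:cor:hess:1}. For the first term, I would invoke the RHS of \eqref{eq:RHS} and show it vanishes uniformly over $\ns\in(0,\mathrm{C}]$ as $N\to\infty$: here $\alpha = \min(1,(\ns/\cm)\Tr(\Hesst^2)^{1/2}) \leq \mathrm{C}\,\Tr(\Hesst^2)^{1/2}/\cm \to 0$, so $G(\alpha)\in\mathcal{O}(\alpha\ln(1/\alpha))\to 0$ and every summand on the RHS of \eqref{eq:RHS} carries a factor of $\alpha$ or $G(\alpha)$. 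The main obstacle is ensuring this convergence is genuinely uniform in $\ns$: one must check that the prefactors $\ns\lambda L_1$, etc., stay bounded on $(0,\mathrm{C}]$ (they do, since $\ns\leq\mathrm{C}$ and $L_1,L_2,L_3$ are fixed constants), and that the $\mathcal{O}(\cdot)$ bound on $G$ is uniform — which it is, as $\alpha$ is driven to $0$ uniformly in $\ns$ by the $N$-dependent factor $\Tr(\Hesst^2)^{1/2}$. With both terms shown to vanish, \eqref{eq:cor:hess:2} follows.
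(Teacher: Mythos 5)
Your proposal is correct and follows essentially the same route as the paper: the paper's (sketched) argument for \eqref{eq:cor:hess:1} is precisely the observation that the difference reduces to a constant times $\ns^2\,\ee^\T\Hesst\ee$ together with the bound $\ee^\T\Hesst\ee \leq \Tr(\Hesst^2)^{1/2} \to 0$, and \eqref{eq:cor:hess:2} follows by the triangle inequality and Theorem~\ref{thm:main}, where $\alpha \leq (\mathrm{C}/\cm)\Tr(\Hesst^2)^{1/2} \to 0$ uniformly in $\ns \in (0,\mathrm{C}]$ makes the RHS of \eqref{eq:RHS} vanish. Your explicit computation of $\asynqg - \bar{\phi}_\infty$ and the remark that $d_1(\Hesst)/\Tr(\Hesst^2)^{1/2} \leq 1$ keeps $G(\alpha) \in \mathcal{O}(\alpha\ln(1/\alpha))$ uniformly in $N$ simply fill in details the paper leaves implicit.
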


Corollary~\ref{cor:hess} shows that the normalized quality gain on a convex quadratic function converges towards the asymptotic normalized quality gain derived on the infinite dimensional sphere function as $\Tr(\Hess^2)^\frac12\to 0$. It implies that the optimal values of the recombination weights and the normalized step-size are independent of the Hessian of the objective function, and given by \eqref{eq:optw}. It is a nice feature since we do not need to tune the weight values depending on the function. Since any twice continuously differentiable function is locally approximated by a quadratic function, the optimal weights derived here are expected to be locally optimal on any twice continuously differentiable function. 

In the above corollary, the population size $\lambda$ is a constant over the dimension $N$. However, in the default setting of the CMA-ES, the population size is $\lambda = 4 + \lfloor 3 \ln(N) \rfloor$, meaning that the population size is unbounded. %
\rev{If the population size increases to infinity as $N \to \infty$, it is not guaranteed that the per-evaluation progress $\bar{\phi} / \lambda$ converges to $\bar{\phi}_\infty / \lambda$ as $N \to \infty$.} %
The following proposition provides a sufficient condition on the recombination weights and the population size such that the per-evaluation progress $\bar{\phi} / \lambda$ converges to $\bar{\phi}_\infty / \lambda$ when $\lambda$ increases as $N$ increases.
\begin{proposition}\label{prop:hesslam}
  Let $(\Hess_N)_{N=1}^{\infty}$ be the sequence of the Hessian matrix that satisfies $\lim_{N\to\infty} \Tr(\Hess_N^2)^\frac12 = 0$. Let $(\lambda_N)_{N=1}^{\infty}$ be the sequence of the population size and $(w_k^{N})_{k=1}^{\lambda_N}$ be the sequence of the weights for the population size $\lambda_N$. Suppose for an arbitrarily small positive $\epsilon$, 
  \begin{gather*}
  \lambda_N^2 \in o\left(\frac{1}{d_1(\Hesst_N)}\right)
  \quad \text{and} \\
  \max\left(\lambda_N, L_1^\frac{1}{1-\epsilon} \lambda_N^\frac{2 - \epsilon}{1-\epsilon},
  L_2^\frac{1}{2-\epsilon} \lambda_N^\frac{3 - \epsilon}{2-\epsilon},
  L_3^\frac{1}{2-\epsilon} \lambda_N^\frac{4 - \epsilon}{2-\epsilon}\right) \in O\left(\frac{1}{ \Tr(\Hesst_N^2)^\frac12}\right)
  \enspace.
\end{gather*}%
Then, 
\begin{equation}
\lim_{N\to\infty}\sup_{\ns \in (0, 2 \ns^*)}\sup_{\xmean \in \R^N\setminus\{\bm{0}\}} \frac{1}{\lambda_N}\abs*{\bar{\phi}(\xmean, \ns) - \bar{\phi}_\infty(\ns, (w_k^N)_{k=1}^{\lambda_N}) }
= 0
\enspace,
\label{eq:cor:hesslam:1}
\end{equation}%
  where $\ns^*$ is the normalized step-size optimal for $\bar{\phi}_\infty(\ns, (w_k^N)_{k=1}^{\lambda_N})$, which is formulated in \eqref{eq:optns}.
\end{proposition}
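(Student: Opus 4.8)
The plan is to bound \eqref{eq:cor:hesslam:1} through the triangle inequality
\[
\frac{1}{\lambda_N}\bigl|\bar{\phi}(\xmean,\ns)-\bar{\phi}_\infty\bigr|\le \frac{1}{\lambda_N}\bigl|\bar{\phi}(\xmean,\ns)-\asynqg\bigr|+\frac{1}{\lambda_N}\bigl|\asynqg-\bar{\phi}_\infty\bigr|,
\]
controlling the first summand by Theorem~\ref{thm:main} and the second by a direct computation, exactly the two ingredients behind Corollary~\ref{cor:hess}. Two a priori estimates make everything uniform in $\xmean$ and in $\ns\in(0,2\ns^*)$. First, writing $T_N=\Tr(\Hess_N^2)^\frac12$, the normalization $\Tr(\Hess_N)=1$ gives $T_N^2=\sum_i d_i^2\le d_1(\Hess_N)\sum_i d_i=d_1(\Hess_N)$, so $d_1(\Hess_N)\le T_N\le d_1(\Hess_N)^{\frac12}$ and $\lambda_N T_N\le(\lambda_N^2 d_1(\Hess_N))^{\frac12}\to0$ by the first hypothesis; hence $\alpha\le(\ns/\cm)T_N\le(2/\cm)\lambda_N T_N\to0$ uniformly over the range of $\ns$. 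Second, Cauchy--Schwarz together with $\sum_i\E[\NN_{i:\lambda}^2]=\lambda_N$ and $\norm{\bm{w}}\ge\lambda_N^{-1/2}$ (from $\sum_i\abs{w_i}=1$) yields $\ns^*=\muw\abs{\bm{w}^\T\eone_{(\lambda)}}\le\lambda_N$, so $\ns\le2\lambda_N$ throughout.

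For the second summand I would make the gap explicit. Subtracting \eqref{eq:nqlim} from \eqref{eq:asynqg} and using $\sum_i (w_i^N)^2=1/\muw$, the terms linear in $\ns$ cancel and one is left with
\[
\asynqg-\bar{\phi}_\infty=\frac{\ns^2}{2}\,\ee_{\xmean}^\T\Hess_N\ee_{\xmean}\bigl(\bm{w}^\T\bm{w}-\bm{w}^\T\etwo_{(\lambda)}\bm{w}\bigr).
\]
Since $\etwo_{(\lambda)}=\E[\bm{\mathcal{N}}\bm{\mathcal{N}}^\T]$ with $\bm{\mathcal{N}}=(\NN_{1:\lambda},\dots,\NN_{\lambda:\lambda})^\T$ is positive semidefinite with trace $\sum_i\E[\NN_{i:\lambda}^2]=\lambda_N$, we have $0\le\bm{w}^\T\etwo_{(\lambda)}\bm{w}\le\lambda_N\norm{\bm{w}}^2$; combined with $\ee_{\xmean}^\T\Hess_N\ee_{\xmean}\le d_1(\Hess_N)$ and $\norm{\bm{w}}^2\le1$ this gives $\abs{\asynqg-\bar{\phi}_\infty}\le\frac{\ns^2}{2}d_1(\Hess_N)\lambda_N\le 2\lambda_N^3 d_1(\Hess_N)$. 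Dividing by $\lambda_N$ leaves $2\lambda_N^2 d_1(\Hess_N)$, which tends to $0$ by the first hypothesis, uniformly in $\ns\le2\lambda_N$ and in $\xmean$.

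For the first summand I would feed the a priori bounds into the right-hand side of \eqref{eq:RHS}, which is independent of $\xmean$. The key simplifications are that $d_1(\Hess_N)/T_N\le1$, so the bracket in \eqref{eq:galpha} is uniformly $O(\ln(1/\alpha))$ and hence $G(\alpha)+\alpha\in O(\alpha\ln(1/\alpha))\subseteq o(\alpha^{1-\epsilon})$ as $\alpha\to0$; and that $\cm\alpha=\min(\cm,\ns T_N)\le\ns T_N\le2\lambda_N T_N$, which removes $\cm$ from the $L_2$ and $L_3$ contributions. Writing $G(\alpha)+\alpha=\rho(\alpha)\,\alpha^{1-\epsilon}$ with $\rho(\alpha)\to0$ and substituting $\ns\le2\lambda_N$, $\alpha\le(2/\cm)\lambda_N T_N$, and $\cm\alpha\le2\lambda_N T_N$, the three terms of \eqref{eq:RHS} divided by $\lambda_N$ become $\rho(\alpha)$ times (up to constants depending only on $\cm$ and $\epsilon$)
\[
L_1\lambda_N^{2-\epsilon}T_N^{1-\epsilon},\qquad L_2\lambda_N^{3-\epsilon}T_N^{2-\epsilon},\qquad L_3\lambda_N^{4-\epsilon}T_N^{2-\epsilon},
\]
respectively. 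Raising the three entries of the second hypothesis to the powers $1-\epsilon$, $2-\epsilon$, and $2-\epsilon$ shows that each of these quantities is $O(1)$, so each term equals $\rho(\alpha)\cdot O(1)\to0$ because $\alpha\to0$ uniformly in $\ns$.

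The main obstacle is precisely this $O(1)$-versus-$o(1)$ gap: the second hypothesis is calibrated only to make the $L_i$-terms bounded, not vanishing, so the argument closes only once one observes that the normalization $\Tr(\Hess_N)=1$ forces $\lambda_N T_N\to0$ (hence $\alpha\to0$), which activates the strict little-$o$ decay $G(\alpha)+\alpha=o(\alpha^{1-\epsilon})$ and turns the bounded factors into vanishing ones. The remaining work is bookkeeping: lining up the three exponents with the three entries of the hypothesis and verifying that every estimate is uniform in $\xmean$ and in $\ns\in(0,2\ns^*)$, which reuses the $\xmean$-independence of \eqref{eq:RHS} and the uniformity already exploited in Corollary~\ref{cor:hess}.
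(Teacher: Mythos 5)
Your proposal is correct and takes essentially the same route as the paper's proof: the same triangle-inequality decomposition through $\asynqg$, with $\abs{\asynqg - \bar{\phi}_\infty}/\lambda_N$ killed by the hypothesis $\lambda_N^2 d_1(\Hesst_N) \to 0$, and $\abs{\bar{\phi} - \asynqg}/\lambda_N$ killed by substituting $\ns \in O(\lambda_N)$ and $\alpha \in O(\lambda_N T_N)$, $T_N = \Tr(\Hesst_N^2)^{1/2}$, into the bound of Theorem~\ref{thm:main} and matching the resulting quantities $L_1\lambda_N^{2-\epsilon}T_N^{1-\epsilon}$, $L_2\lambda_N^{3-\epsilon}T_N^{2-\epsilon}$, $L_3\lambda_N^{4-\epsilon}T_N^{2-\epsilon}$ against the second hypothesis, exactly as in \eqref{eq:RHS-o}. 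Your write-up is in fact slightly more explicit than the paper's on the one delicate point: you verify that $\alpha \to 0$ uniformly over $\ns \in (0, 2\ns^*)$ via $\lambda_N T_N \leq (\lambda_N^2 d_1(\Hesst_N))^{1/2} \to 0$ (using $\Tr(\Hesst_N) = 1$), which is what licenses $O(\alpha\ln(1/\alpha)) \subseteq o(\alpha^{1-\epsilon})$ and turns the $O(1)$ factors provided by the hypothesis into vanishing terms --- a step the paper uses only implicitly.
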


\begin{proof}
A sufficient condition for $\asynqg$ to converge to $\bar{\phi}_\infty$ for $\ns \in (0, 2 \ns^*)$ is that the third term on the RHS of \eqref{eq:asynqg} converges to zero as $N \to \infty$. As we know from \ref{apdx:nos} that $\bm{w}^\T \etwo_{(\lambda)} \bm{w} \leq d_1(\etwo_{(\lambda)}) \norm{\bm{w}}^2 \leq \Tr(\etwo_{(\lambda)}) \norm{\bm{w}}^2 = \lambda / \muw$. On the other hand, $\ee_{\xmean}^\T \Hesst \ee_{\xmean}$ is no greater than the greatest eigen value $d_1(\Hesst)$ of $\Hesst$. The third term on the RHS of \eqref{eq:asynqg} is maximized when $\ns = \ns^* = \muw (-\bm{w}^\T \eone_{(\lambda)})$, where, $\muw (\bm{w}^\T\eone_{(\lambda)}) \leq \sum_{i=1}^\lambda \abs{\E[\mathcal{N}_{i:\lambda}]}$ and $\frac1\lambda\sum_{i=1}^\lambda \abs{\E[\mathcal{N}_{i:\lambda}]} \to (2 / \pi)^\frac12$. From these arguments derives that the third term on the RHS of \eqref{eq:asynqg} converges to zero as $N \to \infty$ provided that $\lambda^2 d_1(\Hesst) \to 0$ as $N \to 0$.

Next we consider the convergence of the bound (RHS of \eqref{eq:RHS-o}).
Remember that $\alpha = (\ns / \cm) \Tr(\Hesst^2)^\frac12$ and $G(\alpha) \in O(\alpha \ln(1/\alpha))$. For $\ns \in (0, 2 \ns^*)$, we have  $(\ns/\cm) \Tr(\Hesst^2)^\frac12 \leq 2(\ns^*/\cm) \Tr(\Hesst^2)^\frac12$. Since $\ns^* \in O(\lambda)$, we have $\ns \in O(\lambda)$ and $\alpha \in O(\lambda \Tr(\Hess^2)^\frac12)$. Then, the RHS of \eqref{eq:RHS-o} divided by $\lambda$,
\begin{align*}
  \MoveEqLeft[2]O\left(
  \ns \alpha \ln(1/\alpha) [L_1 + L_2 \alpha  + L_3 \lambda \alpha]
  \right)
  \\
  &\subseteq
  o\left(
  \ns \alpha^{1-\epsilon} [L_1 + L_2 \alpha  + L_3 \lambda \alpha]
  \right)
  \\
  &\subseteq
  o\big(
  \lambda^{2-\epsilon} \Tr(\Hesst^2)^\frac{1-\epsilon}{2} [L_1 + L_2 \lambda \Tr(\Hesst^2)^\frac12 + L_3 \lambda^2 \Tr(\Hesst^2)^\frac12]
    \big),
\end{align*}%
where the convergence of each term is supposed in the proposition. 
\end{proof}

Consider the truncation weights with a fixed selection ratio $\lambda = \rho \mu$ for some $\rho > 1$ and the sequence of the Hessian matrices such that the condition number is bounded. From \ref{apdx:Lbound:deriv}, we have that $L_1 \in O(\lambda^{-1/2})$, $L_2 \in O(\lambda^{-3/2})$, and $L_3 \in O(\lambda^{-3/2})$. Moreover, we have $1/d_1(\Hesst) \in O(N)$ and $1/\Tr(\Hesst^2)^\frac12 \in O(N^\frac12)$. Then, the condition of Proposition~\ref{prop:hesslam} reduces to $\lambda \in o(N^\frac13)$. This condition is a rigorous (but probably not tight) bound for the scaling of $\lambda$ such that the per-iteration convergence rate of a $(\mu\slash \mu,\lambda)$-ES with a fixed $\lambda / \mu$ on the sphere function scales like $O(\lambda/N)$ \cite[Equation 6.140]{BeyerBOOK2001}. One can also deduce\todo{deduce?} the condition for the optimal weights and the CMA-type positive weights (positive half of the optimal weights).

\subsection{Effect of the Eigenvalue Distribution of the Hessian Matrix}

\begin{figure}[t]
  \centering
  \begin{subfigure}{0.45\hsize}
    \centering
    \includegraphics[width=0.8\hsize]{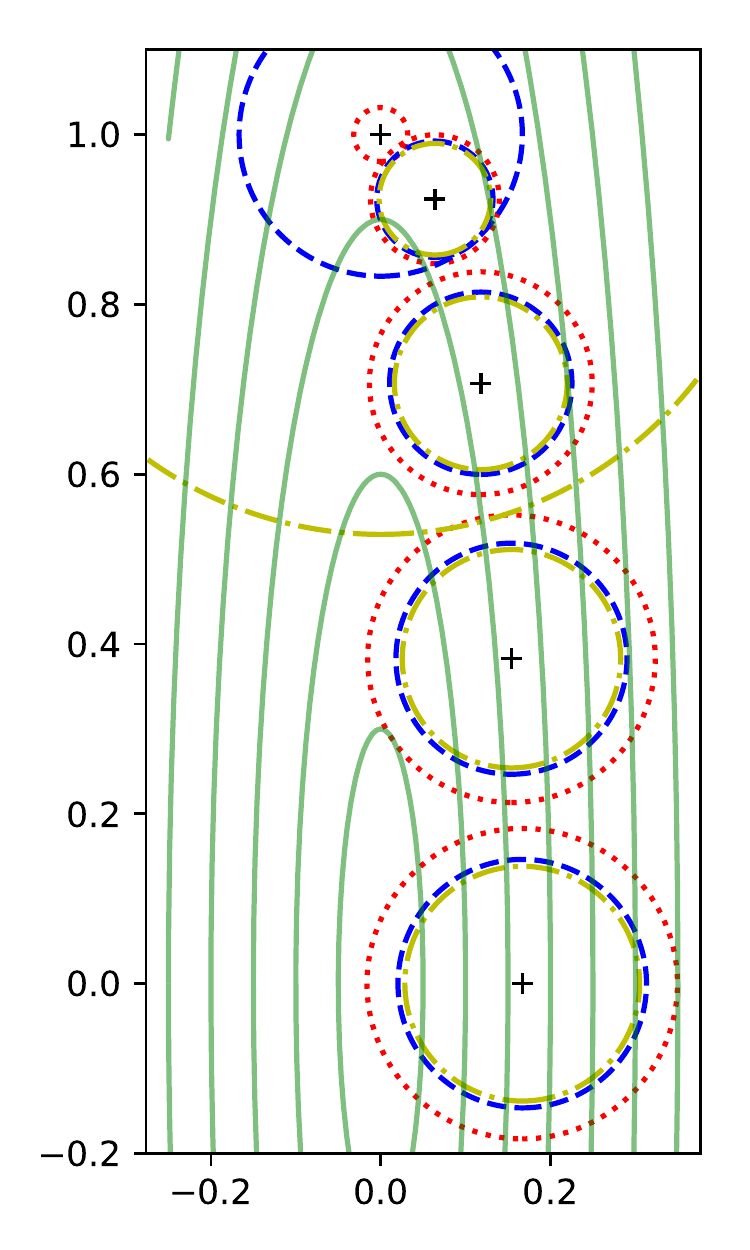}%
    \caption{Optimal weights \eqref{eq:optw}}\label{fig:a}
  \end{subfigure}%
  \begin{subfigure}{0.45\hsize}
    \centering
    \includegraphics[width=0.8\hsize]{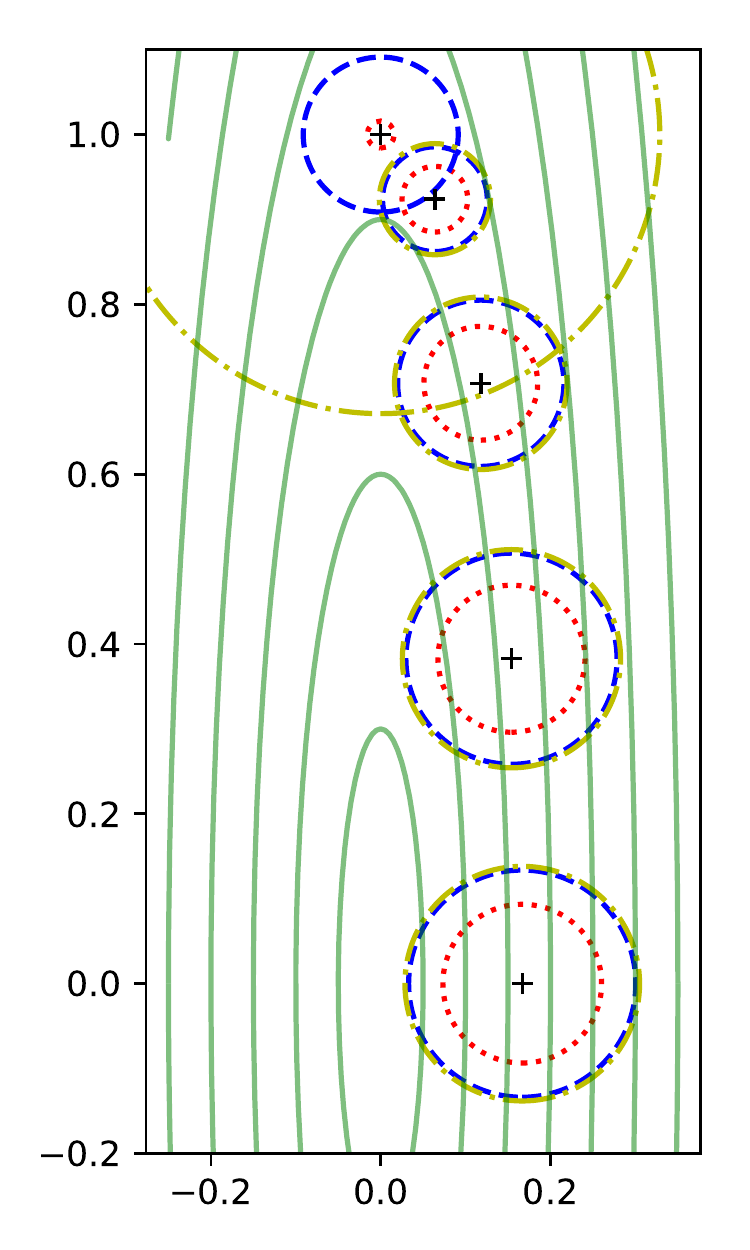}%
    \caption{Optimal positive weights}\label{fig:b}
  \end{subfigure}%
\caption{The \new{asymptotically} optimal step-size $\sigma^* = \ns^* \norm{\nabla f(\xmean)}$ on $f(x) = x^\T \Hess x / 2$ with $\Hess = \diag(1, 36)$. The circles with radius $\ns^* \norm{\nabla f(\xmean)}$ centered at $\xmean = 2\Hess^{-\frac12}(\cos(\theta), \sin(\theta))$ with $\theta = \pi/2, 3\pi/8, \pi/4, \pi/8, 0$ are displayed, where the \new{asymptotically} optimal normalized step-size $\ns^*$ is computed using \eqref{eq:optns-gen} with the optimal weights \eqref{eq:optw} \new{(left) and with the optimal positive weights (right)}. Red dotted: $\lambda =2$, Blue dashed: $\lambda = 10$, Yellow dot-dashed: $\lambda = 50$.\todo{To my understanding we do not see the \emph{actually} optimal step-size, but the "limit-optimal" step-size value which is actually not optimal here. If so, we should probably make this clear to prevent misunderstandings. I am also very puzzled by the observation that the optimal step-size is in most cases larger for the smaller $\lambda$. What am I missing? } 
}
\label{fig:visualize}
\end{figure}

Corollary~\ref{cor:hess} implies \new{that} the optimal recombination weights are independent of the Hessian in the limit $N \to \infty$ as long as $\lim_{N\to\infty}\Tr(\Hess^2) = 0$. Moreover, Proposition~\ref{prop:optw-gen} and Corollary~\ref{cor:sig} together impl\new{y that}\del{ies} the same values approximate the optimal recombination weights for a sufficiently large $\lambda$ in the limit of $\sigma / \norm{\xmean} \to 0$. On the other hand, the step-size and the progress rate depend on the Hessian. In the following we discuss the effect of the Hessian followed by a simulation. To make the discussion more intuitive, we remove the condition $\Tr(\Hess) = 1$ and consider an arbitrary non-negative definite symmetric $\Hess$. All the statements above still hold by replacing $\Hess$ with $\Hess / \Tr(\Hess)$.

Given $(w_k)_{k=1}^{\lambda}$, the optimal normalized step-size $\ns^*$ and the normalized quality gain $\bar{\phi}_{\infty}(\ns, (w_k))$ are independent of the Hessian $\Hess$ and the distribution mean $\xmean$. However, the step-size $\sigma = (\ns / \cm) \norm{\nabla f(\xmean)}$ and the quality gain $\phi(\xmean, \sigma) = g(\xmean) \bar{\phi}_{\infty}(\ns, (w_k)_{k=1}^{\lambda}) $ depend on them through $\norm{\nabla f(\xmean)}$ and $g(\xmean) = \norm{\nabla f(\xmean)}^2 / f(\xmean)$. If $\xmean$ is on a contour ellipsoid ($f(\xmean) = 1$ for example), $g(\xmean)$ increases as $\norm{\nabla f(\xmean)}$. In other words, the greater the optimal step-size is, the greater quality gain we achieve. These quantities are bounded as
\begin{gather*}
\frac{\eig_{N}(\Hess)}{\Tr(\Hess)}\norm{\xmean - x^*} \leq \norm{\nabla f(\xmean)} \leq \frac{\eig_{1}(\Hess)}{\Tr(\Hess)} \norm{\xmean - x^*} 
\text{ and } \\
\frac{\eig_{N}(\Hess)}{\Tr(\Hess)} \leq \frac{g(\xmean)}{2} \leq \frac{\eig_{1}(\Hess)}{\Tr(\Hess)} 
\enspace.
\end{gather*}%
The lower and upper equalities for both of the above inequalities hold if and only if $\xmean - x^*$, or equivalently $\ee_{\xmean}$, is parallel to the eigenspace corresponding to the smallest and largest eigenvalues of $\Hess$, respectively. 
Therefore, the optimal step-size and the quality gain can be different by the factor of at most $\Cond(\Hess) = \eig_{1}(\Hess) / \eig_{N}(\Hess)$. Figure~\ref{fig:visualize} visualizes example cases. The \new{asymptotic} optimal step-size heavily depends on the location of $\xmean$ if $\Hess$ is ill-conditioned. If we focus on the area around each circle, the function landscape looks like a parabolic ridge function. Note that a relatively large step-size displayed at $\xmean = (0, 1)$ for $\lambda > 2$ is because $\ee_\xmean^\T \Hess \ee_\xmean \ll 1$ in \eqref{eq:optns-gen}, resulting in $\ns^* \propto \muw$.
The asymptotic normalized quality gain is derived for the limit $\sigma / \norm{\xmean} \to 0$, and the update of the mean vector results in an approximation of the negative gradient direction. If the mean vector is exactly on the longest axis of the hyper-ellipsoid, the gradient points to the optimal solution and a large normalized step-size is desired. However, this never happens in practice, since the mean vector will not be exactly in such a situation \rev{with probability one}.
\new{We also remark that the asymptotically optimal normalized step-size \new{\eqref{eq:optns-gen}} is not monotonically increasing w.r.t.\ $\lambda$. Indeed, we see in Figure~\ref{fig:a} smaller step-sizes for greater $\lambda$ values, whereas they are monotonic in Figure~\ref{fig:b}. The main difference is that the step-sizes with the optimal weights for $\lambda = 2$ can be greater than those with the optimal positive weights. Nonetheless, there is no guarantee that these figures reflect the \emph{actually} optimal step-size precisely since displayed are the step-size optimal in the limit of $\cm$ to infinity. Further investigation needs to be conducted.}

Table~\ref{tbl:hess} summarizes $\eig_{N}(\Hess) / \Tr(\Hess)$, $\eig_{1}(\Hess) / \Tr(\Hess)$ and $\Tr(\Hess^2) / \Tr(\Hess)^2$ for different types of $\Hess$. The greater the first two quantities are, the greater the optimal step-size and hence the quality gain are. The smaller the last quantity is, the more reliable it is to approximate $\bar{\phi}$ with $\bar{\phi}_\infty$. If the condition number $\alpha = \Cond(\Hess)$ is fixed, the worst case ($\eig_{N}(\Hess) / \Tr(\Hess)$) is maximized when the function has a discus type structure and is minimized when the function has a cigar type structure. The value of $\eig_{N}(\Hess) / \Tr(\Hess)$ will be close to $1/N$ as $N \to \infty$ for the discus type function, whereas it will be close to $1 / (N\alpha)$ for the cigar. Therefore, the discus type function is as easy to solve as the sphere function if $N \gg \alpha$, while the cigar type function takes roughly $1/\alpha$ times more iterations to reach the same target function value. On the other hand, the inequality $\Tr(\Hess^2) / \Tr(\Hess)^2 < 1 / (N - 1)$ holds independently of $\alpha$ on the cigar type function, while $\Tr(\Hess^2) / \Tr(\Hess)^2$ depends heavily on $\alpha$ on the discus type function. The fraction will not be sufficiently small and we can not approximate the normalized quality gain by $\bar{\phi}_\infty$ unless $\alpha \ll N$ holds\footnote{However, the worst case scenario on the discus type function, $1 / (\alpha + (N-1))$, describes an empirical observation \cite{Hansen2001ec} that the convergence speed of evolution strategy with isotropic distribution does not scale down with $N$ for $N \ll \alpha$.}. 

\rev{%
  The condition $\lim_{N\to\infty} \Tr(\Hess^2)/\Tr(\Hess)^2 = 0$ also hold for some positive semi-definite $\Hess$, where only $M < N$ eigenvalues of $\Hess$ are positive and the others are zero. That is, $\eig_1(\Hess) \geq \dots \geq \eig_{M}(\Hess) > 0$ and $\eig_{M+1}(\Hess) = \dots = \eig_{N}(\Hess)$. In this case, the condition $\Tr(\Hess^2) / \Tr(\Hess)^2 \to 0$ holds only if the dimension $M$ of the effective search space tends to infinity as $N \to \infty$. The above inequalities are refined as follows. Let $\xmean^+$ and $\xmean^-$ be the decomposition of $\xmean$ such that $\xmean^{-}$ is the projection of $\xmean$ onto the hyper-plane through $x^*$ spanned by the eigenvectors of $\Hess$ corresponding to the zero eigenvalue, and $\xmean^{+} = \xmean - \xmean^{-}$.
Then,   
\begin{gather*}
\frac{\eig_{M}(\Hess)}{\Tr(\Hess)} \leq \frac{g(\xmean)}{2} \leq \frac{\eig_{1}(\Hess)}{\Tr(\Hess)} 
\\
\frac{\eig_{M}(\Hess)}{\Tr(\Hess)}\norm{\xmean^{+}} \leq \frac{\norm{\nabla f(\xmean)}}{\Tr(\Hess)} \leq \frac{\eig_{1}(\Hess)}{\Tr(\Hess)} \norm{\xmean^{+}} \enspace.
\end{gather*}
In this case, $g(\xmean)$ can be $2/M$ if $\eig_1(\mathbf{A}) = \cdots = \eig_M(\mathbf{A}) > 0$ and $\eig_i(\mathbf{A}) = 0$ for $i \in \llbracket M+1, N \rrbracket$. The quality gain is then proportional to $2/M$, instead of $2/N$. That is, the evolution strategy with the optimal step-size solves the quadratic function with the effective rank $M$ defined on the $N$ dimensional search space as efficiently as it solves its projection onto the effective search space.
}%

\begin{table}[t]
\centering
\caption{Different types of the eigenvalue distributions of $\Hess$. The second to fourth types (discus: $\eig_1(\Hess) = \alpha$ and $\eig_2(\Hess) = \dots = \eig_N(\Hess) = 1$, ellipsoid: $\eig_{i}(\Hess) = \alpha^\frac{i-1}{N-1}$, cigar: $\eig_1(\Hess) = \dots = \eig_{N-1}(\Hess) = \alpha$ and $\eig_N(\Hess) = 1$) have the condition number $\Cond(\Hess) = \eig_1(\Hess)/\eig_N(\Hess) = \alpha$, while the last type has the condition number $N$.}
\label{tbl:hess}
\begin{tabular}{lccc}
\toprule 
Type & $\frac{\eig_{N}(\Hess)}{\Tr(\Hess)}$ & $\frac{\eig_{1}(\Hess)}{\Tr(\Hess)}$ & $\frac{\Tr(\Hess^2)}{\Tr(\Hess)^2}$ 
\\
\midrule
Sphere & 
	$\frac{1}{N}$ & 
	$\frac{1}{N}$ & 
	$\frac{1}{N}$
\\
Discus & 
	$\frac{1}{(N-1) + \alpha}$ & 
	$\frac{\alpha}{(N-1) + \alpha}$ & 
	$\frac{(N-1) + \alpha^2}{((N-1) + \alpha)^2}$ 
\\
Ellipsoid & 
	$\frac{\alpha^\frac{1}{N-1} - 1}{\alpha^\frac{N}{N-1} - 1}$ & 
	$\frac{\alpha^\frac{N}{N-1} - \alpha}{\alpha^\frac{N}{N-1} - 1}$ & 
	$\frac{\big(\alpha^\frac{2N}{N-1} - 1\big) / \big(\alpha^\frac{2}{N-1} - 1\big)}{\big(\alpha^\frac{N}{N-1} - 1\big)^2 / \big(\alpha^\frac{1}{N-1} - 1\big)^2}$ 
\\
Cigar  & 
	$\frac{1}{(N-1)\alpha + 1}$ & 
	$\frac{\alpha}{(N-1)\alpha + 1}$ & 
	$\frac{(N-1)\alpha^2 + 1}{((N-1)\alpha + 1)^2}$ 
\\
\midrule
$\eig_{i}(\Hess) = i$ & 
	$\frac{1}{N(N+1)/2}$ & 
	$\frac{1}{(N+1)/2}$ & 
	$\frac{\frac16 N(N+1)(2N+1)}{\big(N(N+1)/2\big)^2}$ 
\\
\bottomrule
\end{tabular}
\end{table}

\paragraph{Comment on the algorithm dynamics}

The asymptotic quality gain depends on the distribution mean $\xmean$ through $g(\xmean)$. In practice, we observe near worst case performance with $g(\xmean) \approx 2 \eig_{N}(\Hess) / \Tr(\Hess)$, which implies that $\xmean - x^*$ is almost parallel to the eigenspace corresponding to the smallest eigenvalue $d_{N}(\Hess)$ of the Hessian matrix. We provide an intuition to explain this behavior, which will be useful to understand the algorithm, even though the argument is not fully rigorous.

Consider \rev{Algorithm~\ref{alg:es}} with scale-invariant step-size (Definition~\ref{def:ns}). Lemma~\ref{lem:u1} implies that the order of the function values $f(X_i)$ coincide with the order of $[\NN_{i}]_1 = \ee^\T (X_i - \xmean^{(t)}) / \sigma^{(t)}$, where $\ee = \nabla f(\xmean^{(t)}) / \norm{\nabla f(\xmean^{(t)})}$. This is because if $Z \sim \mathcal{N}(\bm{0}, \eye)$, then $Z^\T \Hess Z / \Tr(\Hess)$ in \eqref{eq:fdiff} almost surely converges to one by the strong law of large numbers as $N \to \infty$ under $\Tr(\Hess^2) / \Tr(\Hess)^2 \to 0$. It means that the function value of a \rev{candidate} solution is determined solely by the first component on the right-hand side of \eqref{eq:fdiff}, that is, $\ee^\T (X_i - \xmean^{(t)}) / \sigma^{(t)}$. 
Since the ranking of the function value only depends on $\ee^\T (X_i - \xmean^{(t)}) / \sigma^{(t)}$, one may rewrite the update of the mean vector as
\begin{equation}
\xmean^{(t+1)} = \xmean^{(t)} 
	+ \cm\sigma^{(t)}\sum_{i=1}^{\lambda} w_i \NN_{i:\lambda}(0, 1) \cdot \ee\\
	+ \cm\sigma^{(t)}\mu_{w}^{-\frac12}\NN(\bm{0}, \eye - \ee\ee^\T) \enspace,
	\label{eq:m-limit}
      \end{equation}%
where $\NN_{i:\lambda}(0, 1)$ are the $i$-th order statistics from $\lambda$ population of $\NN(0, 1)$, and $\NN(\bm{0}, \eye - \ee\ee^\T)$ is the normally distributed random vector with mean vector $\bm{0}$ and the degenerated covariance matrix $\eye - \ee \ee^\T$. It indicates that the mean vector moves along the gradient direction with the distribution $\cm\sigma^{(t)}\sum_{i=1}^{\lambda} w_i \NN_{i:\lambda}(0, 1)$, while it moves randomly in the subspace orthogonal to the gradient with the distribution $\cm\sigma^{(t)}\mu_w^{-\frac12}\NN(\bm{0}, \eye - \ee\ee^\T)$. 

If the function is spherical, i.e.\ $\Hess \propto \eye$, the mean vector does a symmetric, unbiased random walk on the surface of a hypersphere while the radius of the hypersphere gradually decreases due to the second term on \eqref{eq:m-limit}. If the function is a general convex quadratic function, $\Hess \not \propto \eye$, the
corresponding random walk on the surface of a hyperellipsoid becomes biased. Then, $\xmean - x^*$ tends to be parallel to the eigenspace corresponding to the smallest eigenvalue $d_{N}(\Hess)$, which means that the quality gain is close to the worst case of $\eig_N(\Hess) / \Tr(\Hess)$. The reason may be explained as follows. The progress in one step is the largest in the short axis direction (parallel to the eigenvector corresponding to the largest eigenvalue of $\Hess$), and the smallest in the long axis direction (parallel \rev{to} the eigenvector corresponding to the largest eigenvalue of $\Hess$). The \new{short axis direction is quickly optimized and the} situation \new{gets close}\del{quickly becomes closer} to the worst case, \new{where}\del{while} it takes \new{many}\del{longer} iterations to escape from\del{near worst situation}. Therefore, we observe \new{the} near worst situation in practice. Further theoretical investigation on the distribution of $\ee = \nabla f(\xmean^{(t)}) / \norm{\nabla f(\xmean^{(t)})}$ should be done in the future work.


\newcommand{\figsize}{0.333\linewidth}
\begin{figure}[!t]
\centering
\begin{subfigure}{\figsize}
\includegraphics[width=\hsize]{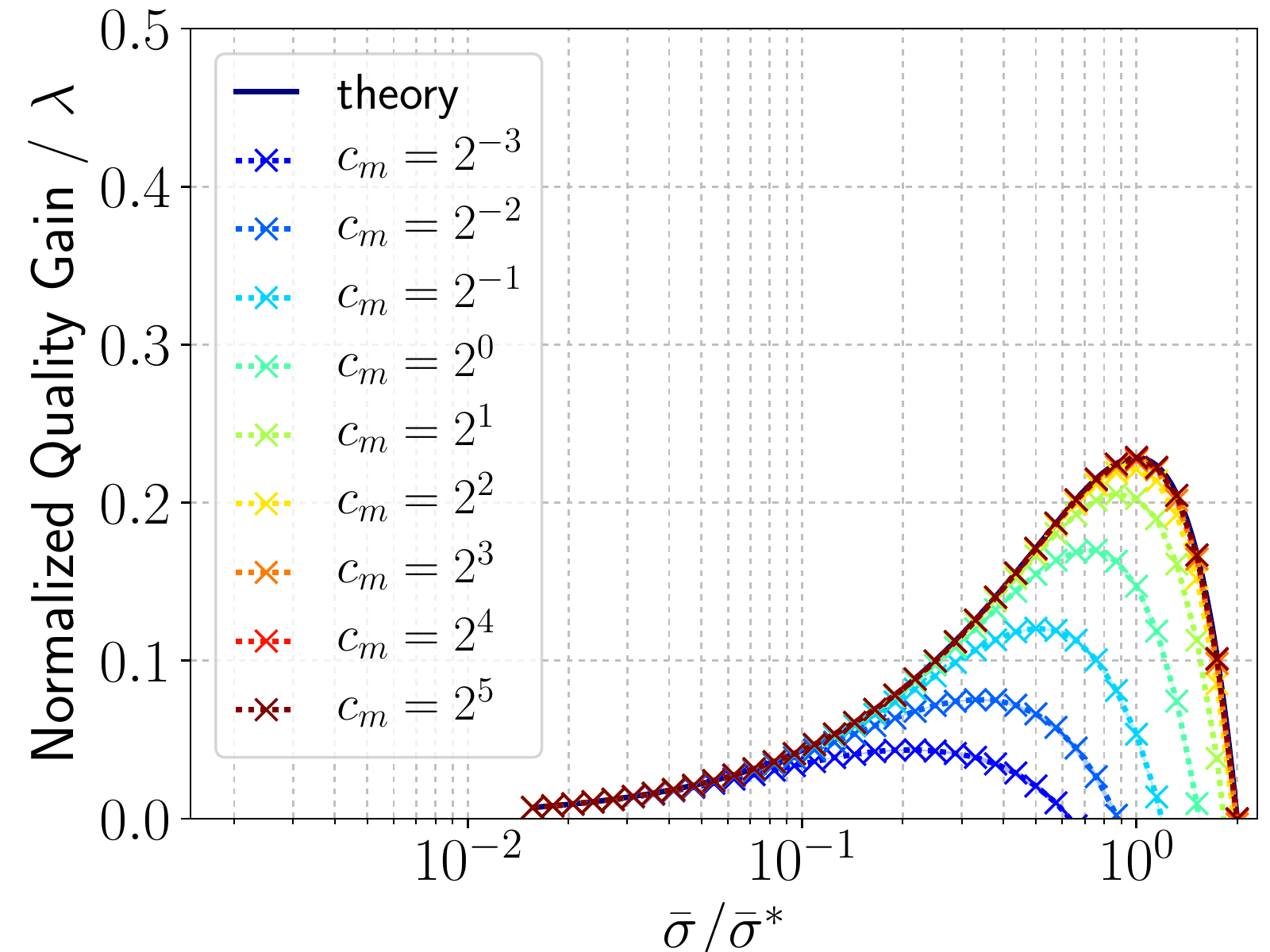}
\end{subfigure}%
\begin{subfigure}{\figsize}
\includegraphics[width=\hsize]{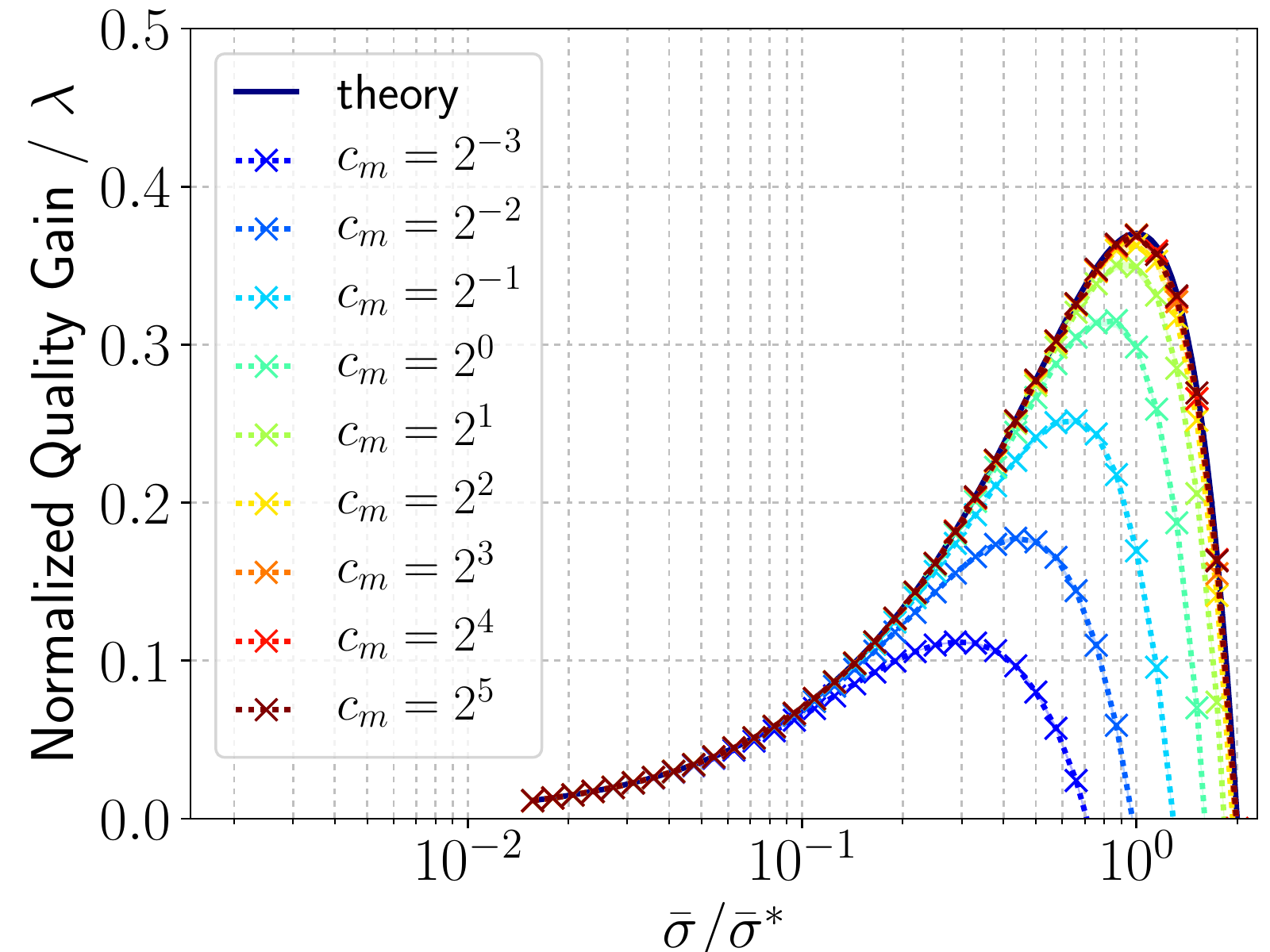}
\end{subfigure}%
\begin{subfigure}{\figsize}
\includegraphics[width=\hsize]{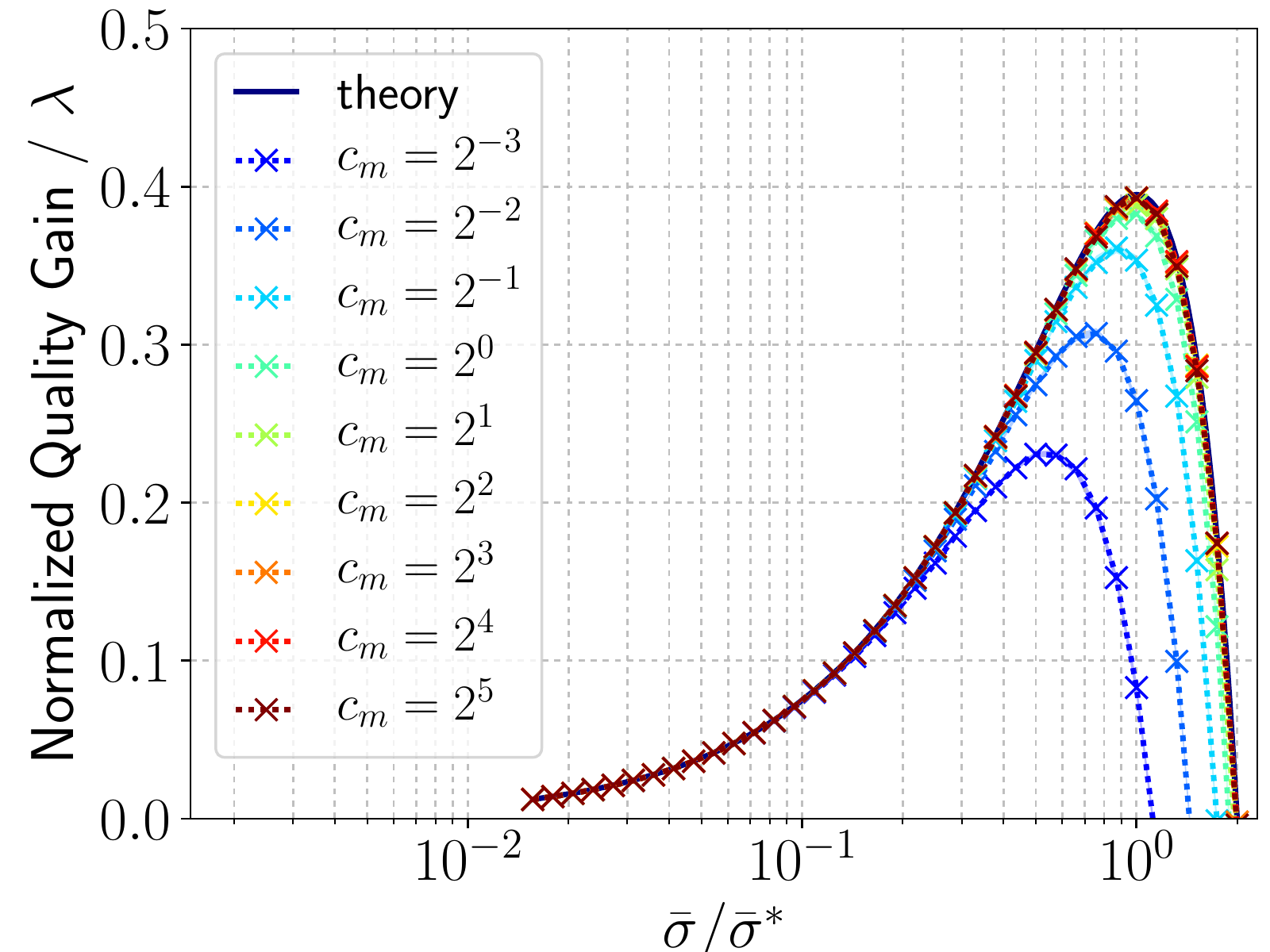}
\end{subfigure}%
\\
\begin{subfigure}{\figsize}
\includegraphics[width=\hsize]{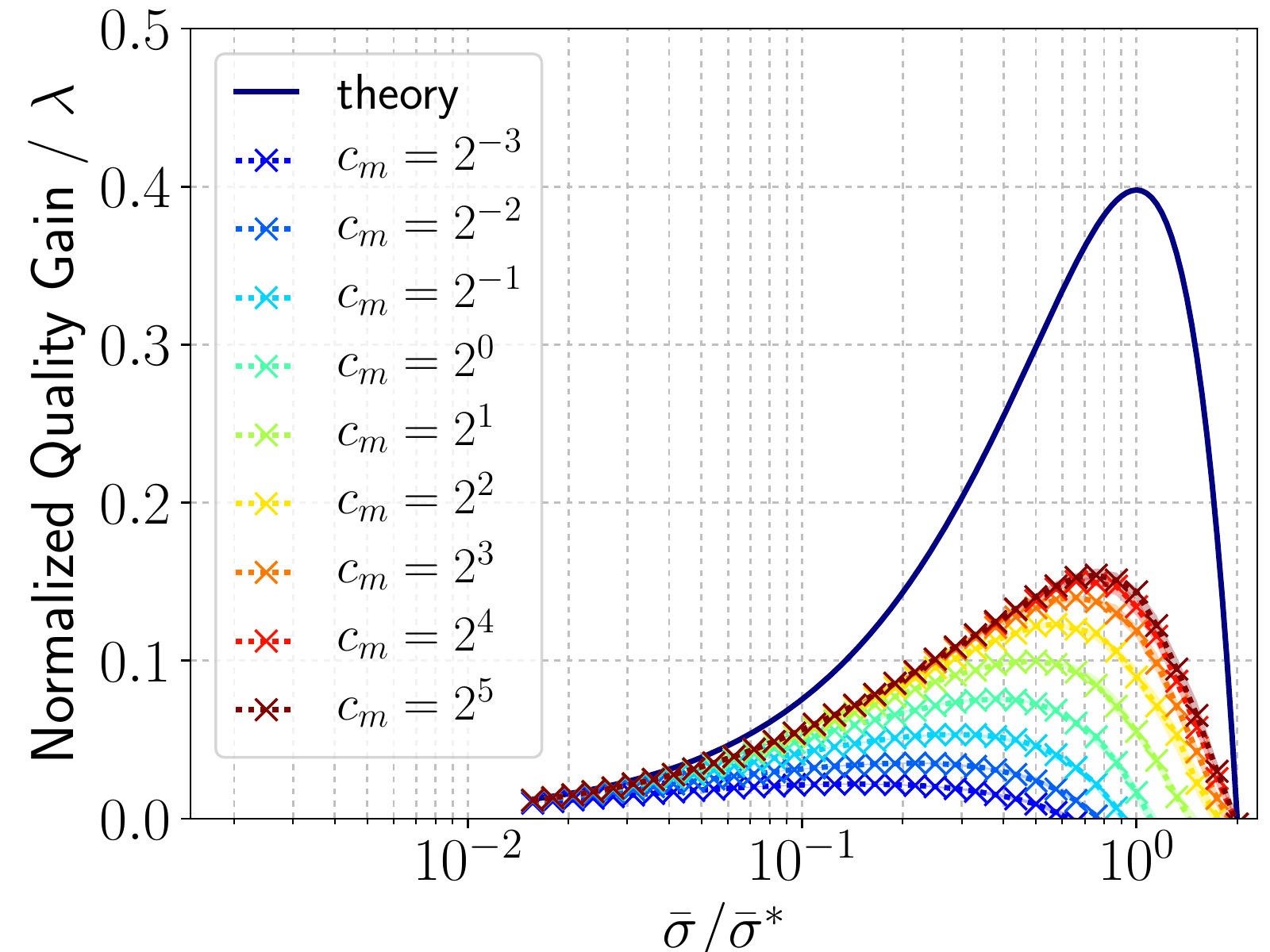}
\end{subfigure}%
\begin{subfigure}{\figsize}
\includegraphics[width=\hsize]{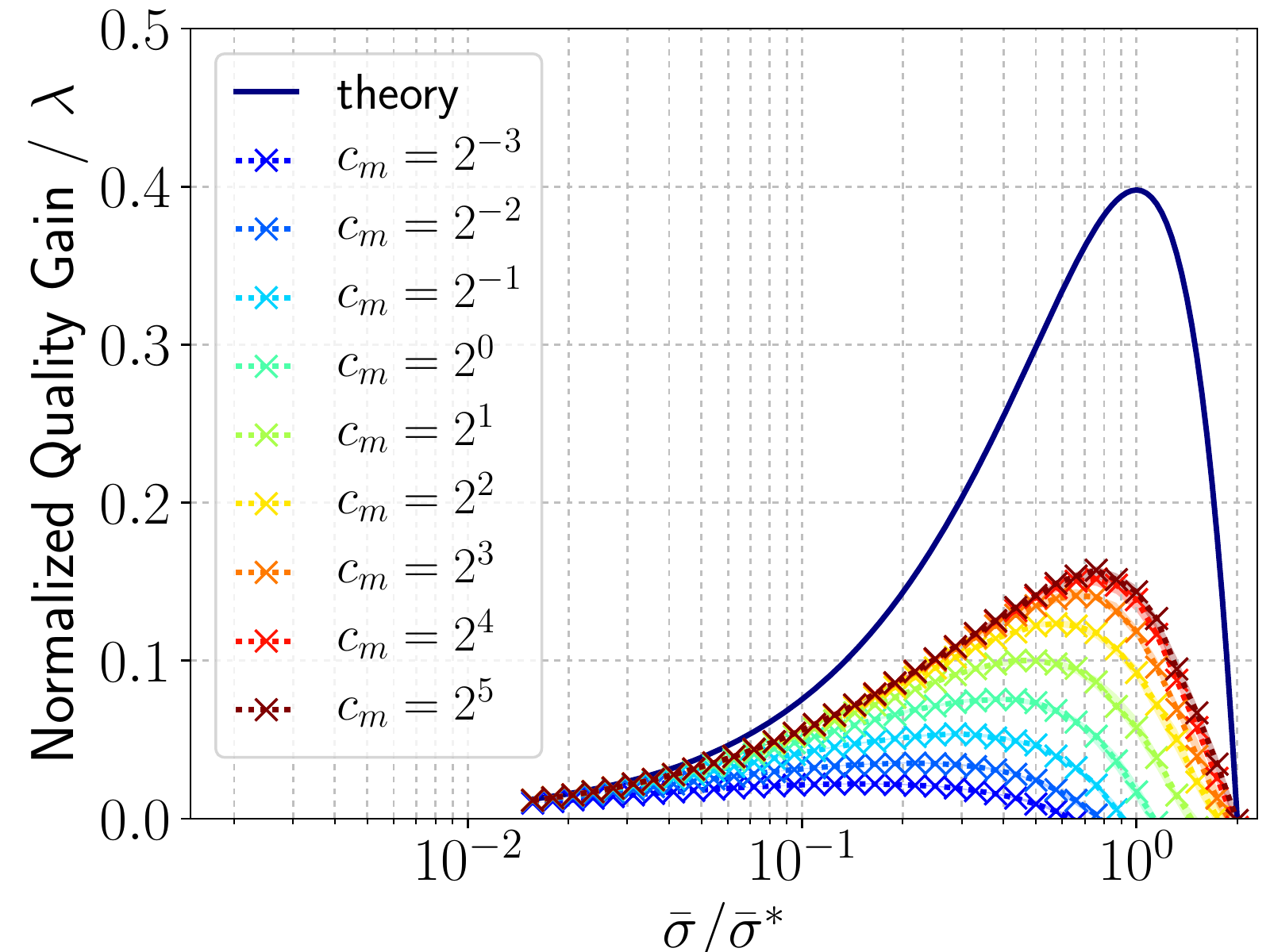}
\end{subfigure}%
\begin{subfigure}{\figsize}
\includegraphics[width=\hsize]{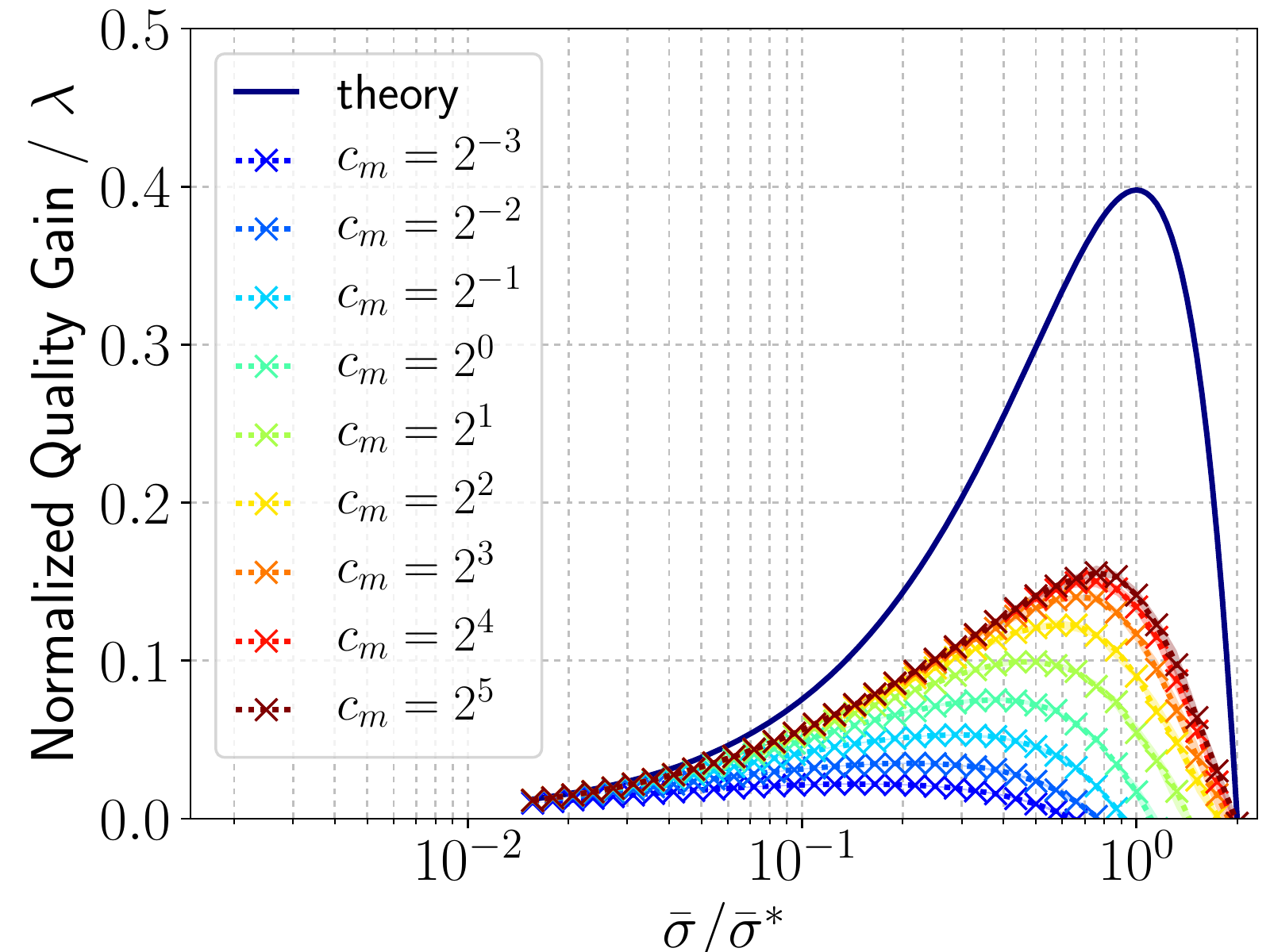}
\end{subfigure}%
\\
\begin{subfigure}{\figsize}
\includegraphics[width=\hsize]{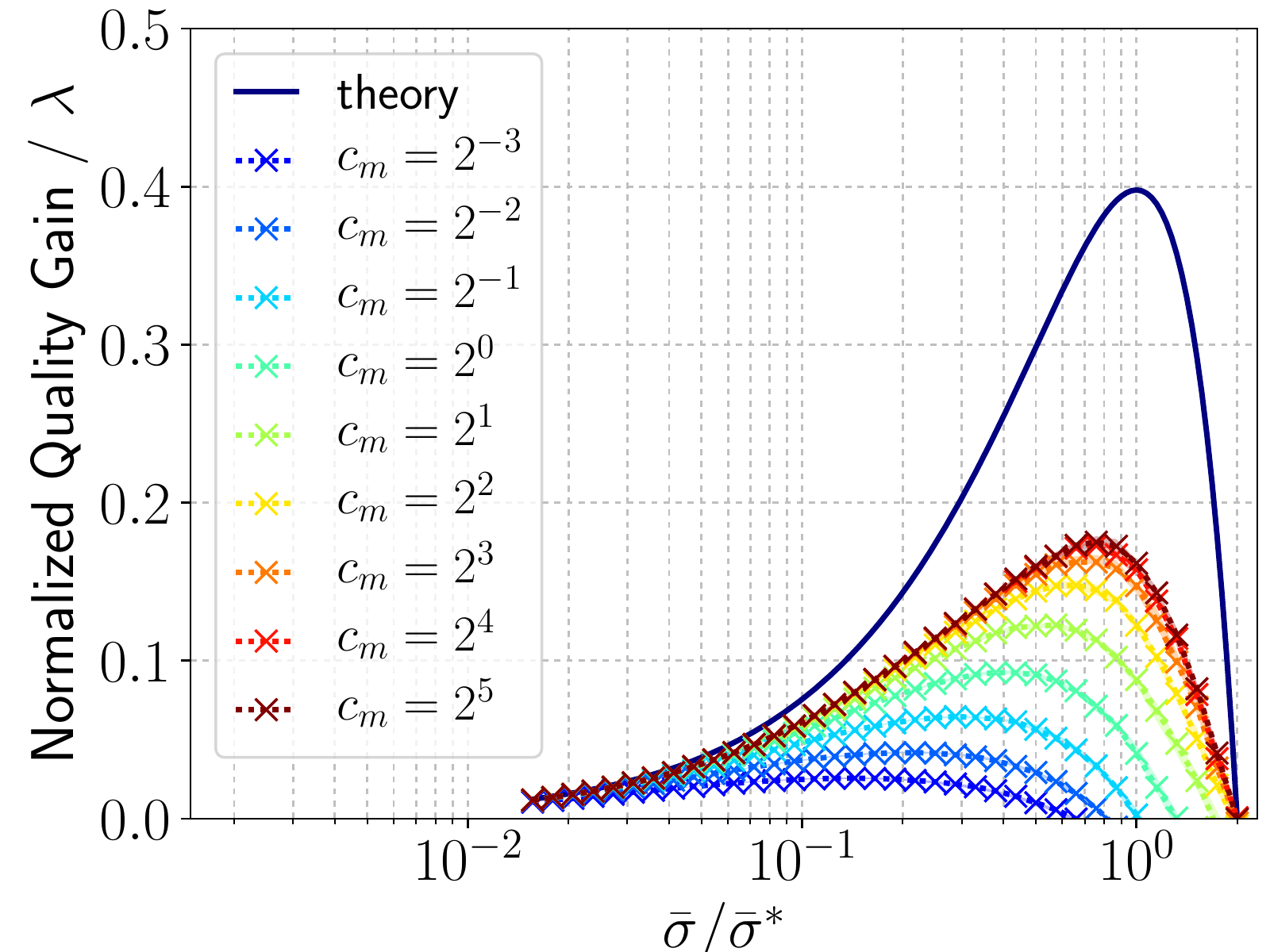}
\end{subfigure}%
\begin{subfigure}{\figsize}
\includegraphics[width=\hsize]{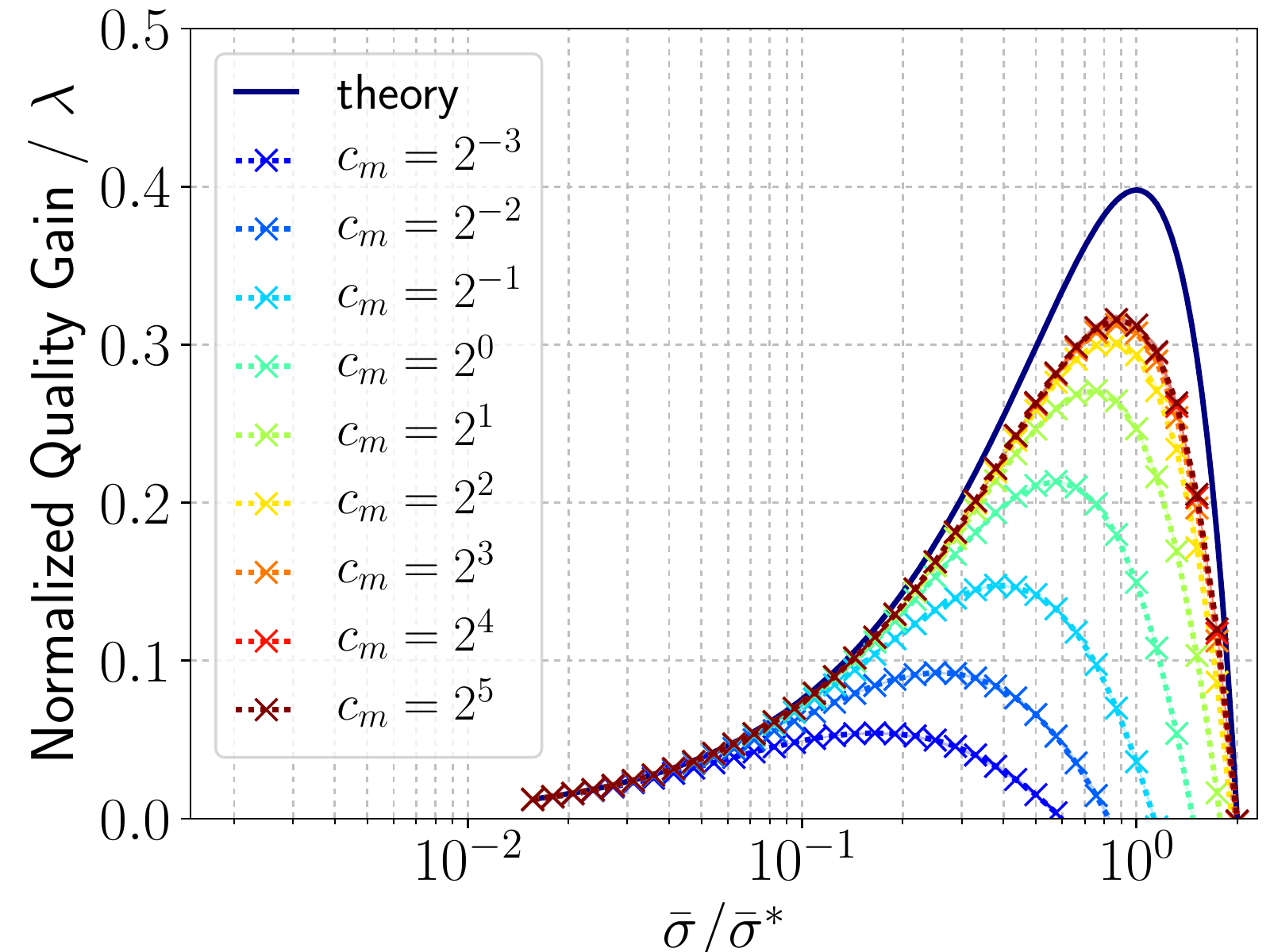}
\end{subfigure}%
\begin{subfigure}{\figsize}
\includegraphics[width=\hsize]{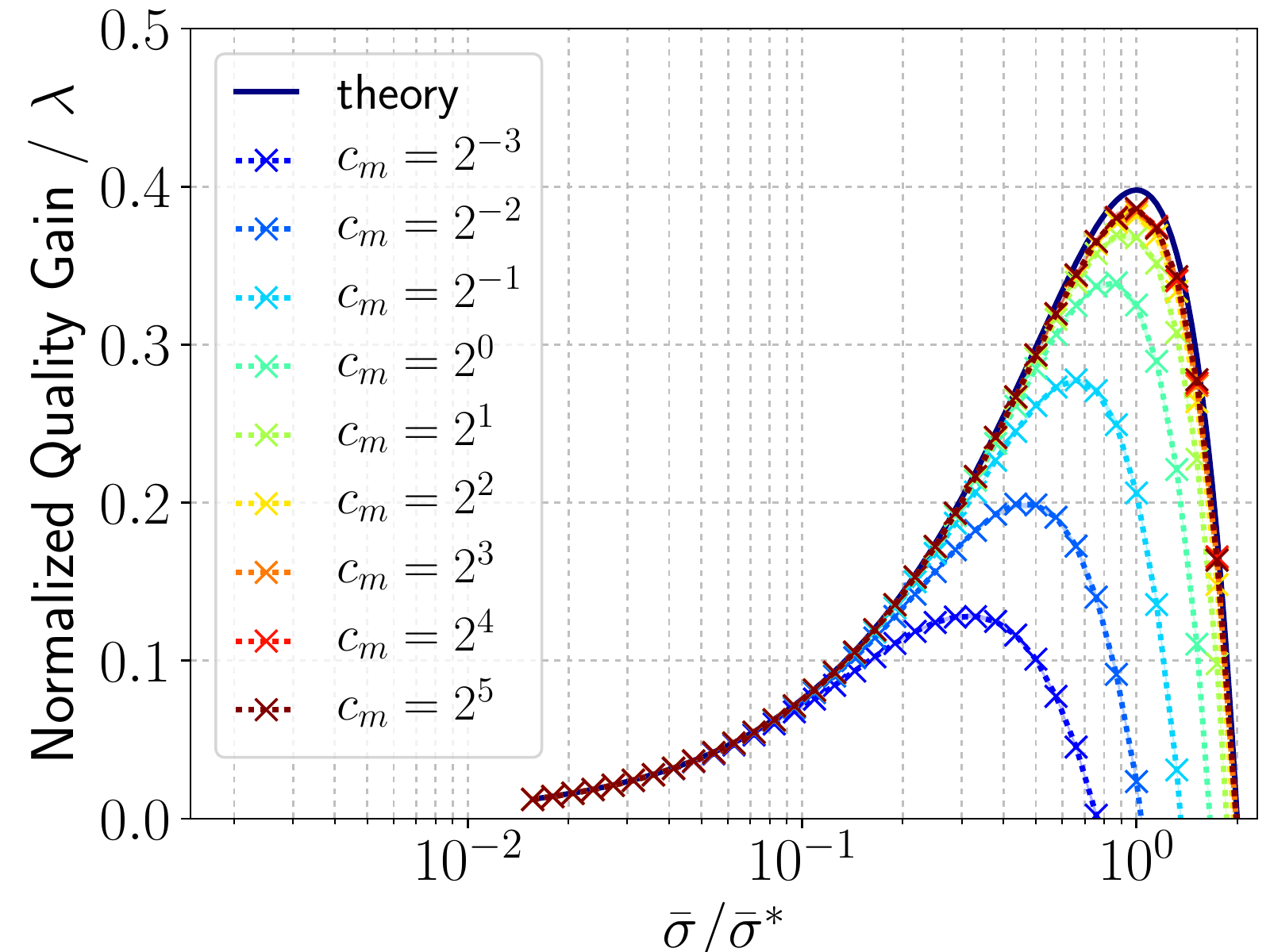}
\end{subfigure}%
\\
\begin{subfigure}{\figsize}
\includegraphics[width=\hsize]{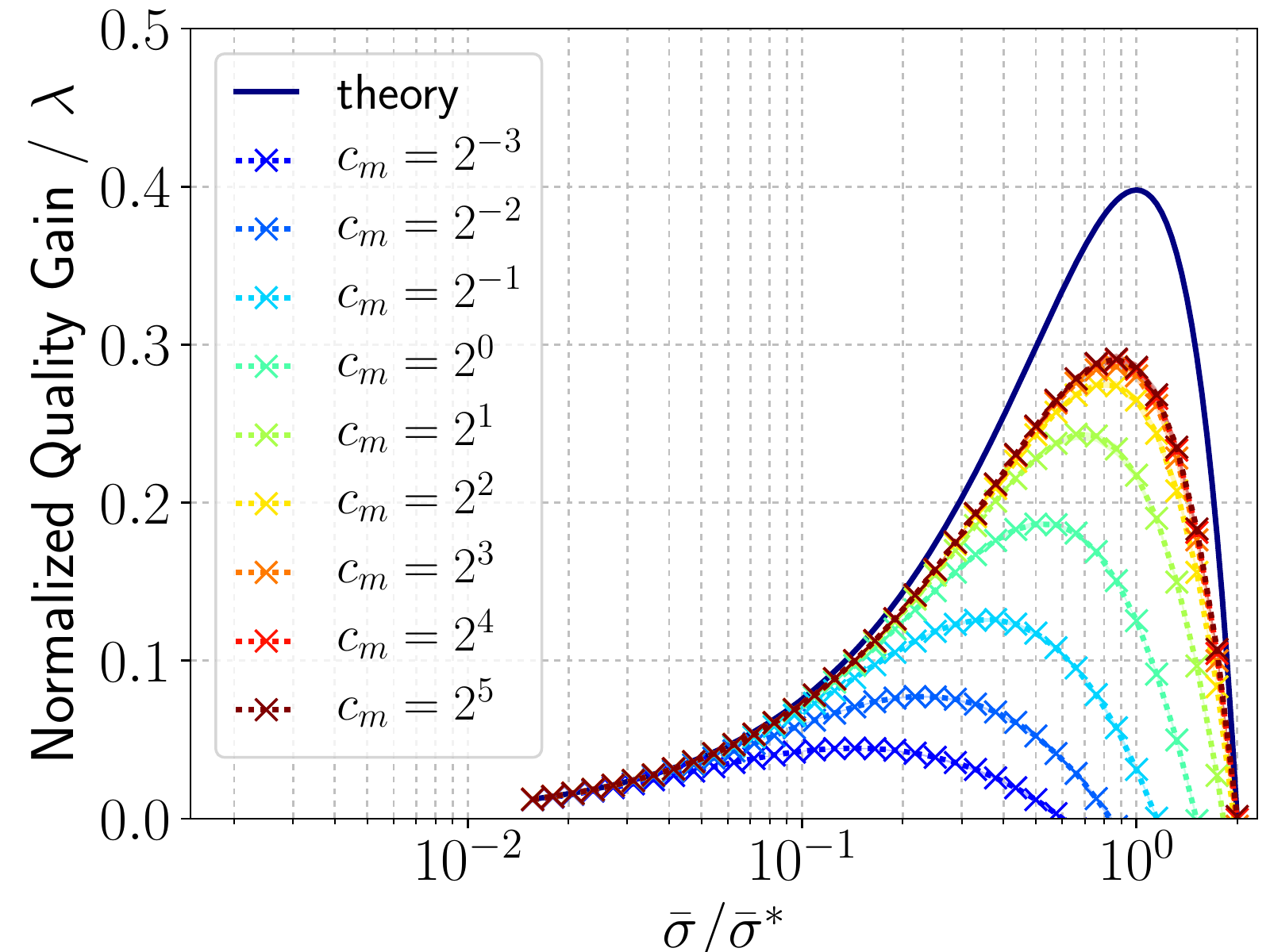}
\end{subfigure}%
\begin{subfigure}{\figsize}
\includegraphics[width=\hsize]{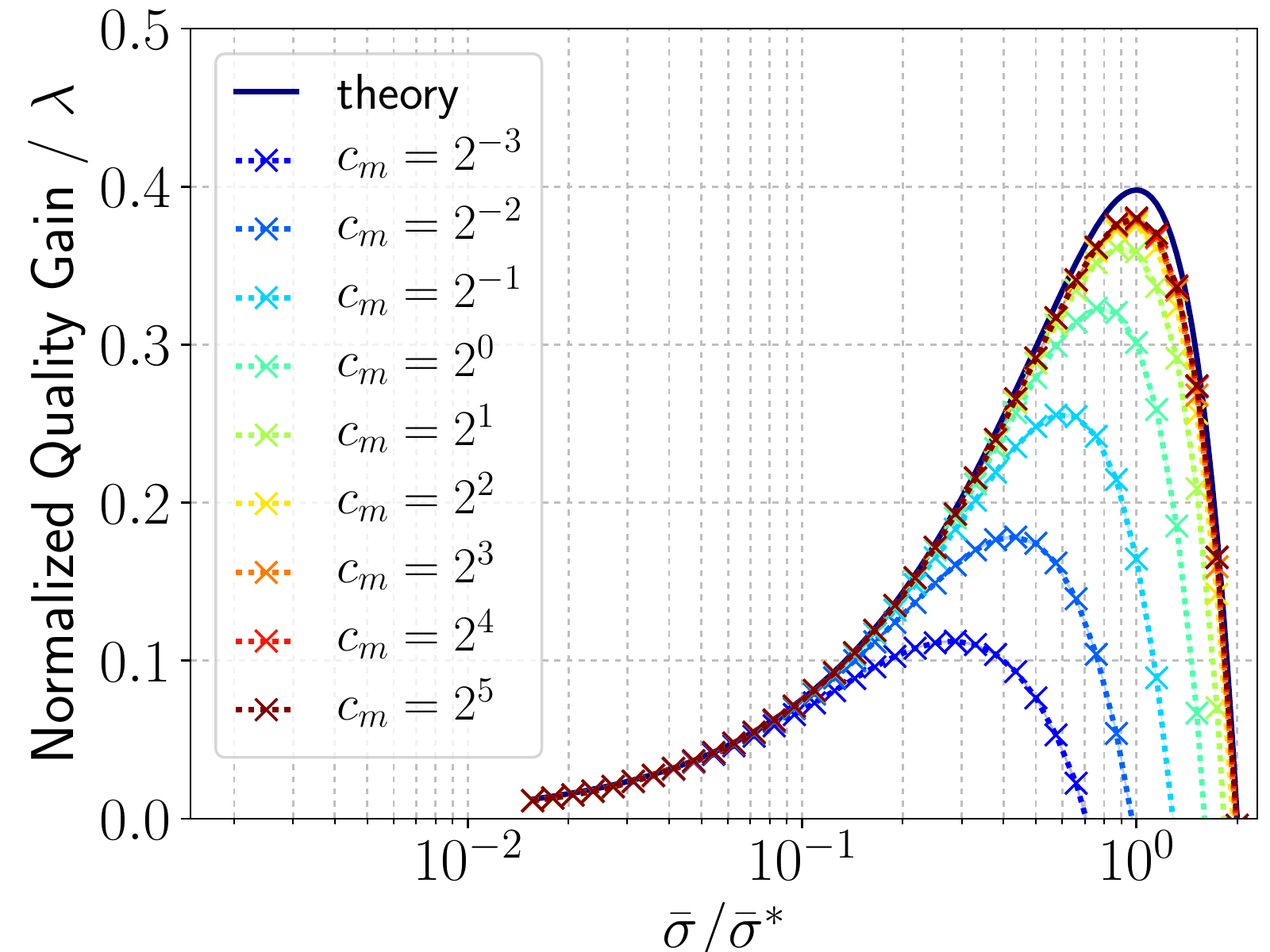}
\end{subfigure}%
\begin{subfigure}{\figsize}
\includegraphics[width=\hsize]{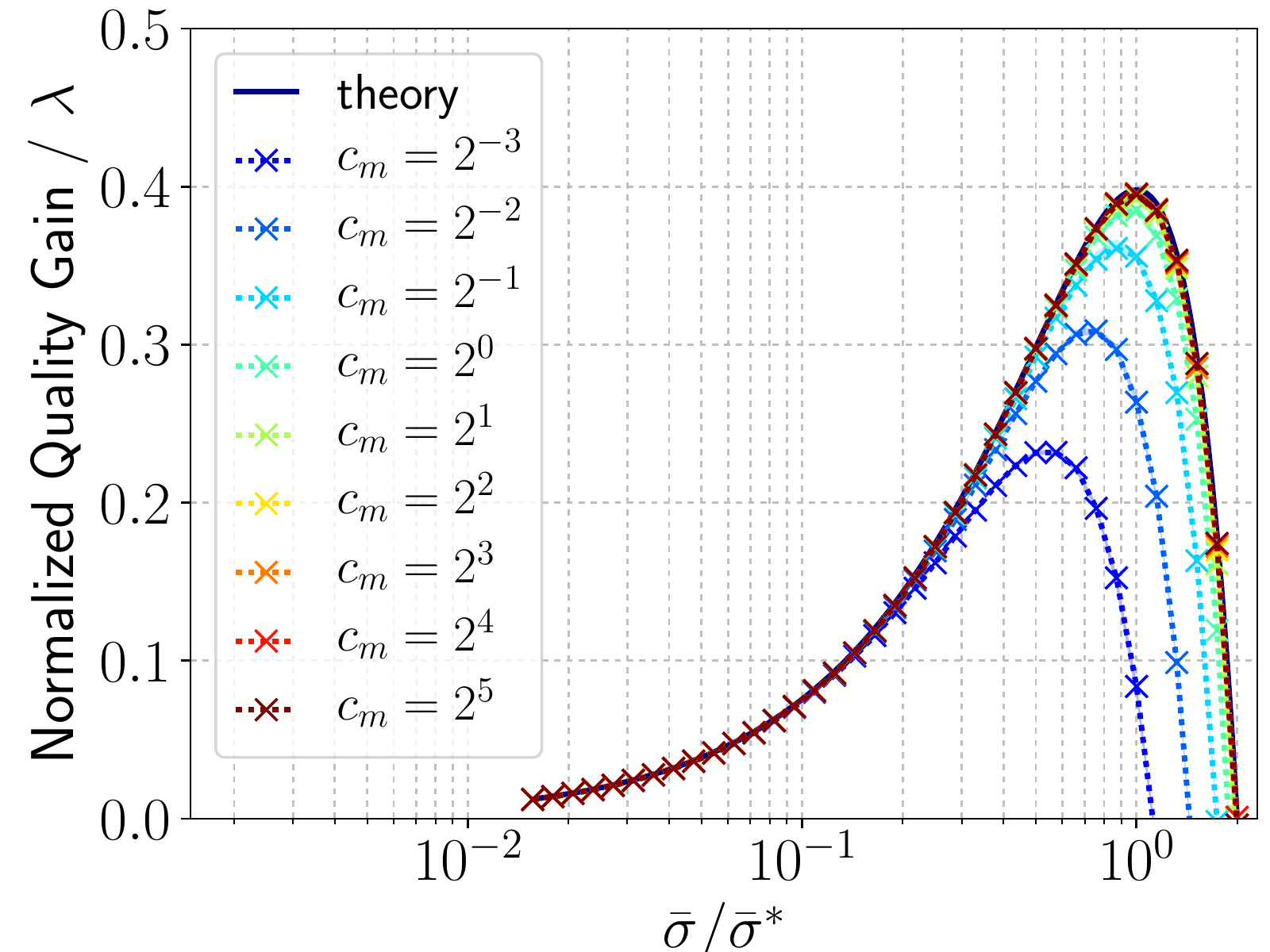}
\end{subfigure}%
\caption{Empirical normalized quality gain on four convex quadratic functions, Sphere, Discus, Ellipsoid and Cigar (from top to bottom) of dimension $N = 10$, $100$ and $1000$ (from left to right). The optimal weights \eqref{eq:optw} \rev{are} used and $\lambda = 10$.}
\label{fig:opt}
\end{figure}

\begin{figure}[!t]
\centering
\begin{subfigure}{\figsize}
\includegraphics[width=\hsize]{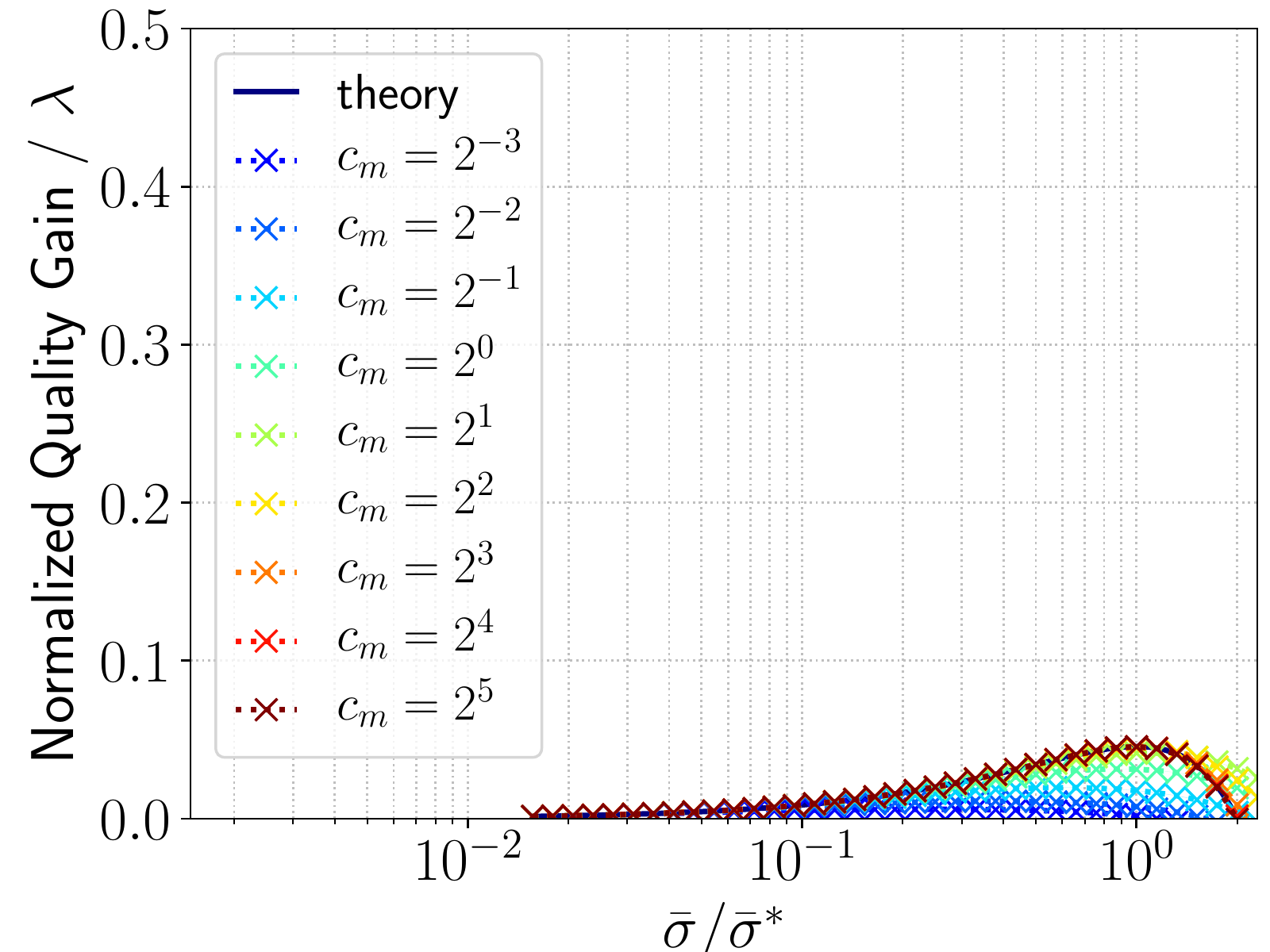}
\end{subfigure}%
\begin{subfigure}{\figsize}
\includegraphics[width=\hsize]{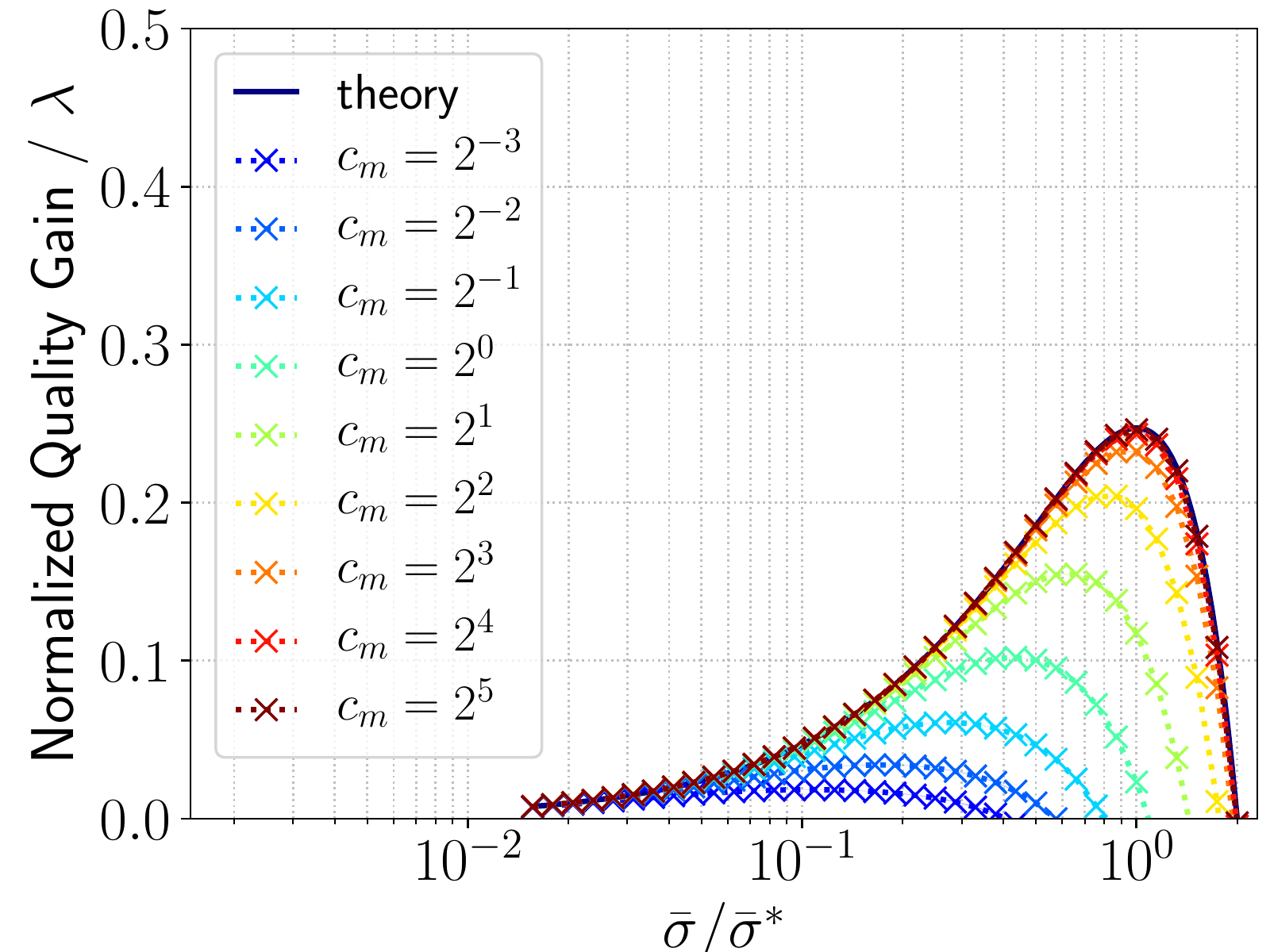}
\end{subfigure}%
\begin{subfigure}{\figsize}
\includegraphics[width=\hsize]{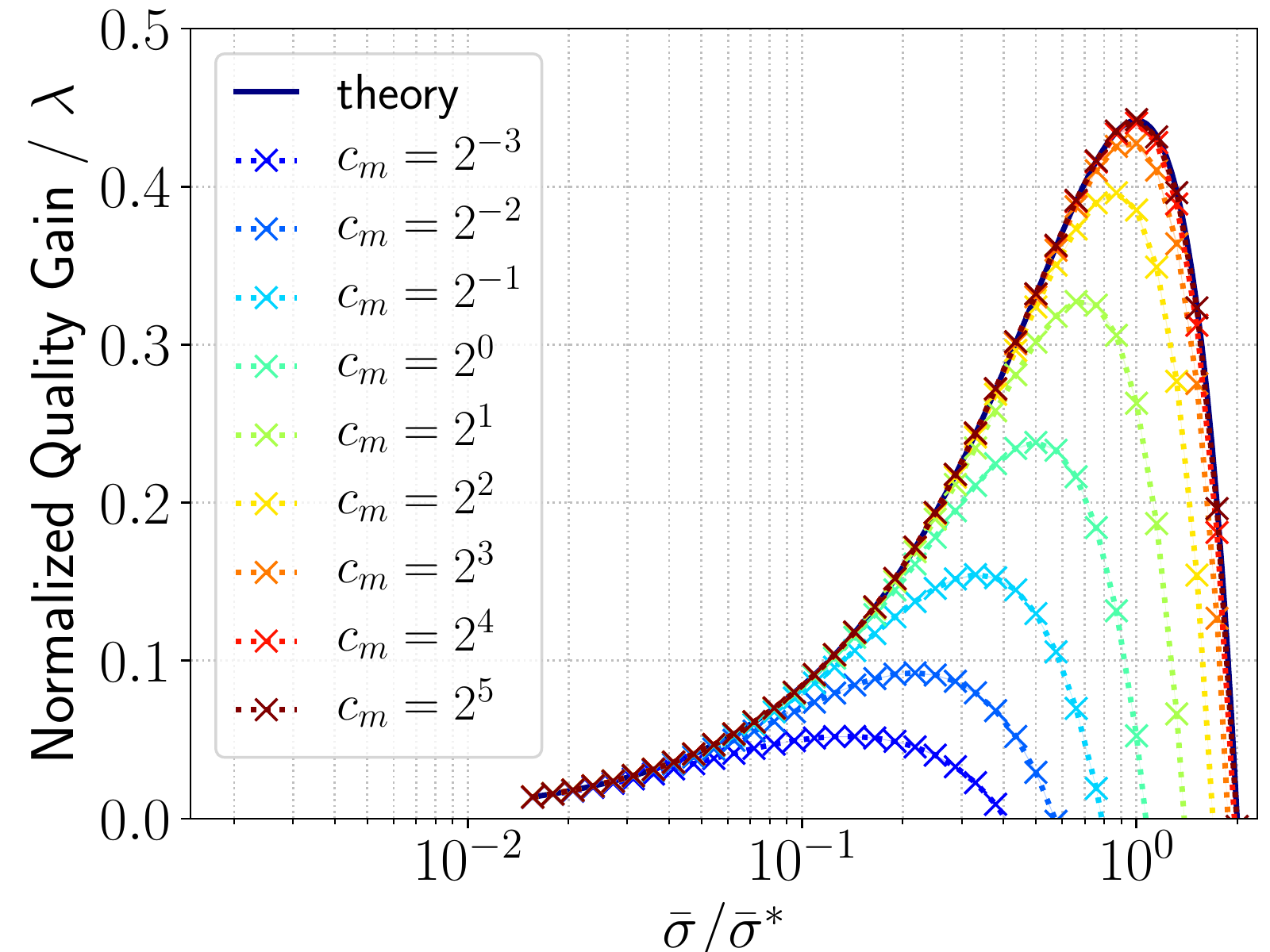}
\end{subfigure}%
\\
\begin{subfigure}{\figsize}
\includegraphics[width=\hsize]{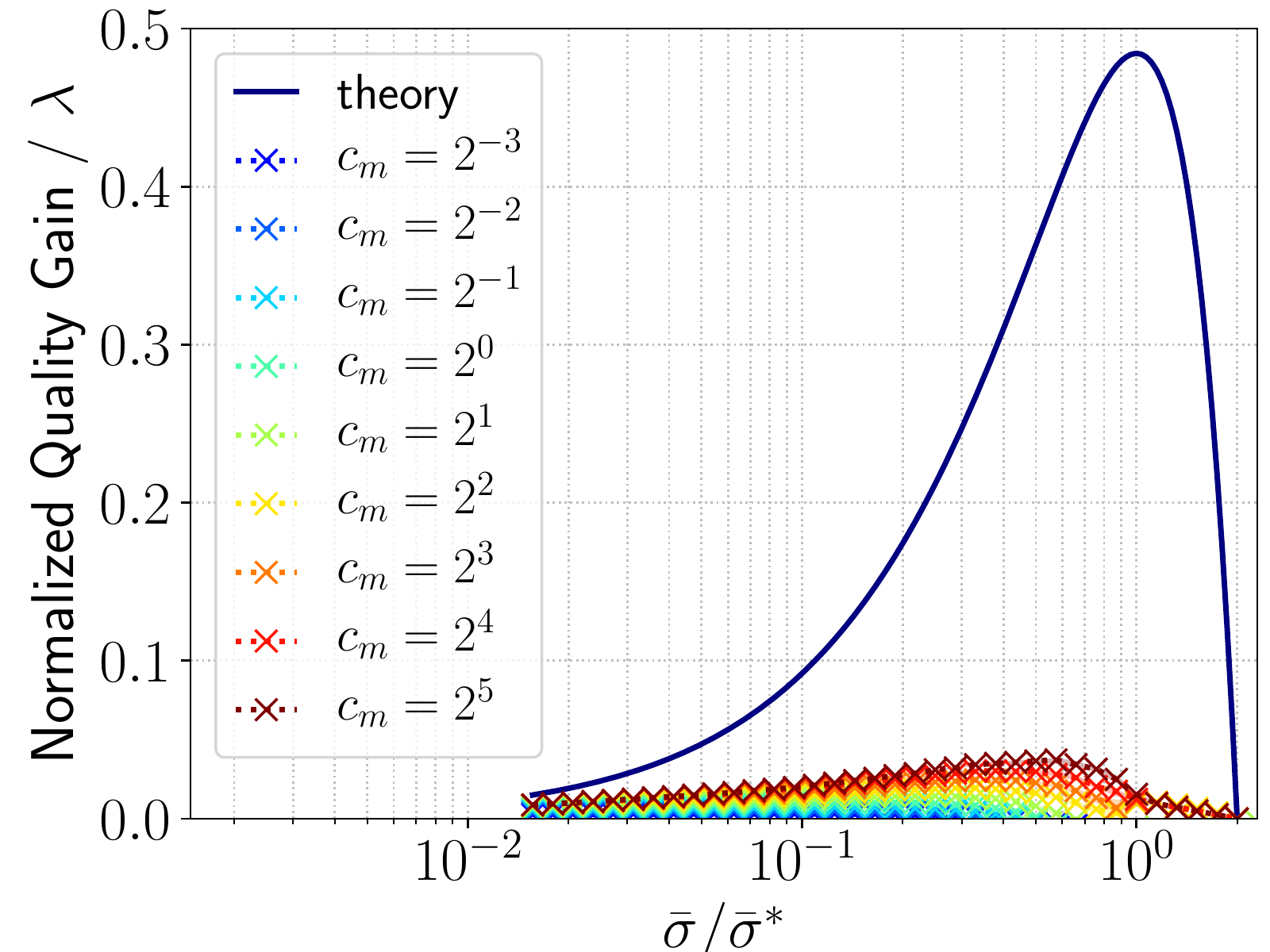}
\end{subfigure}%
\begin{subfigure}{\figsize}
\includegraphics[width=\hsize]{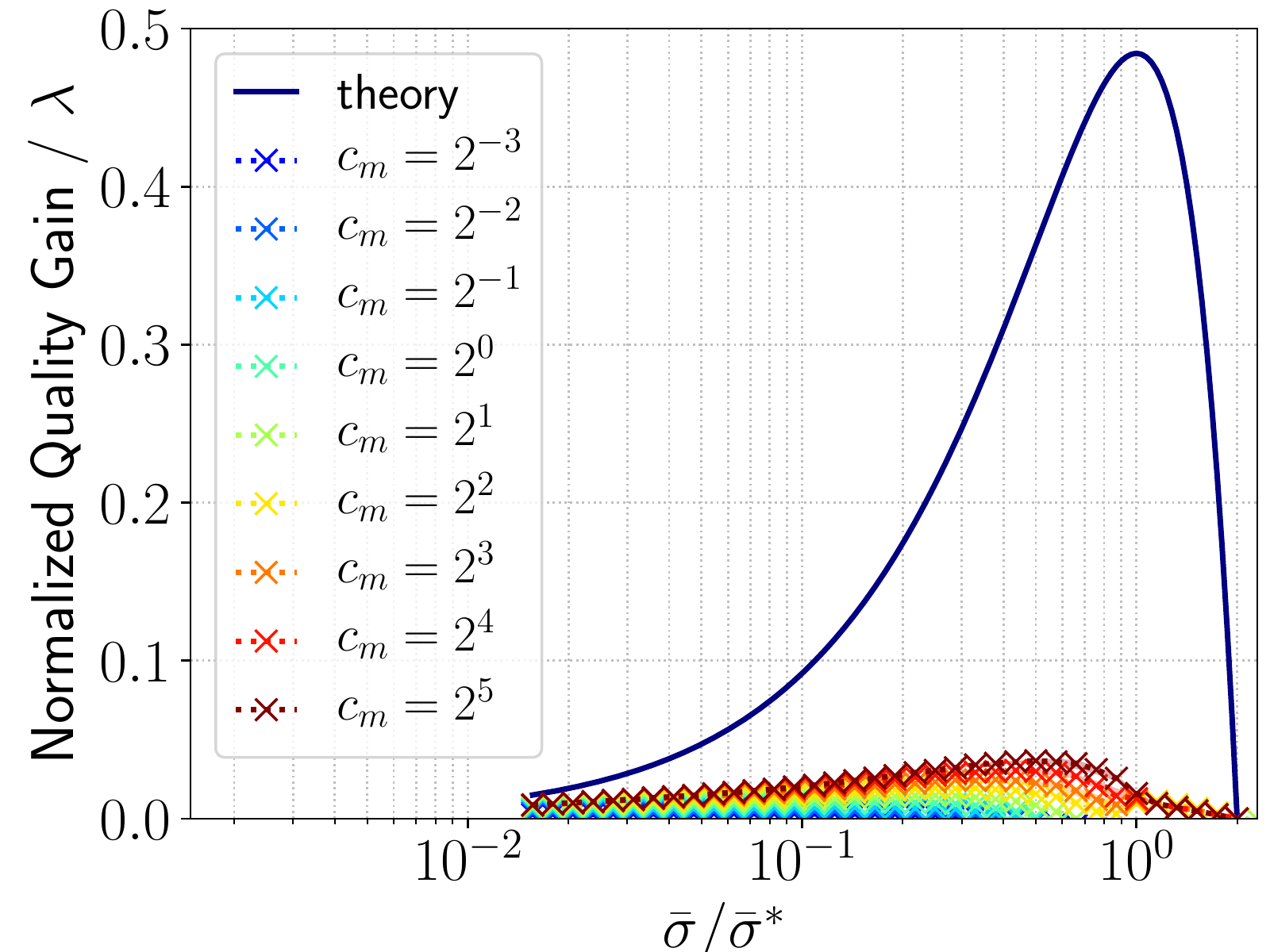}
\end{subfigure}%
\begin{subfigure}{\figsize}
\includegraphics[width=\hsize]{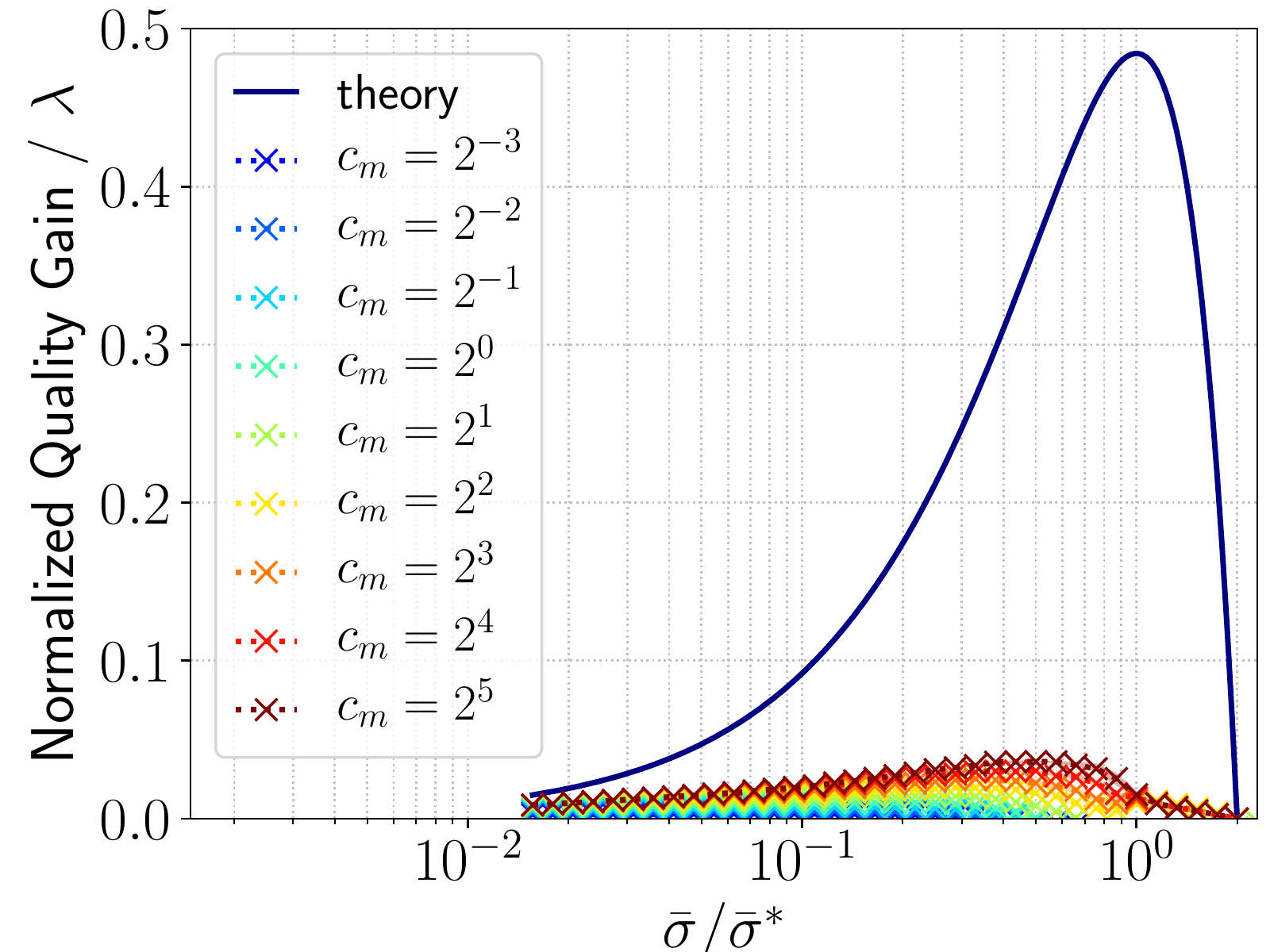}
\end{subfigure}%
\\
\begin{subfigure}{\figsize}
\includegraphics[width=\hsize]{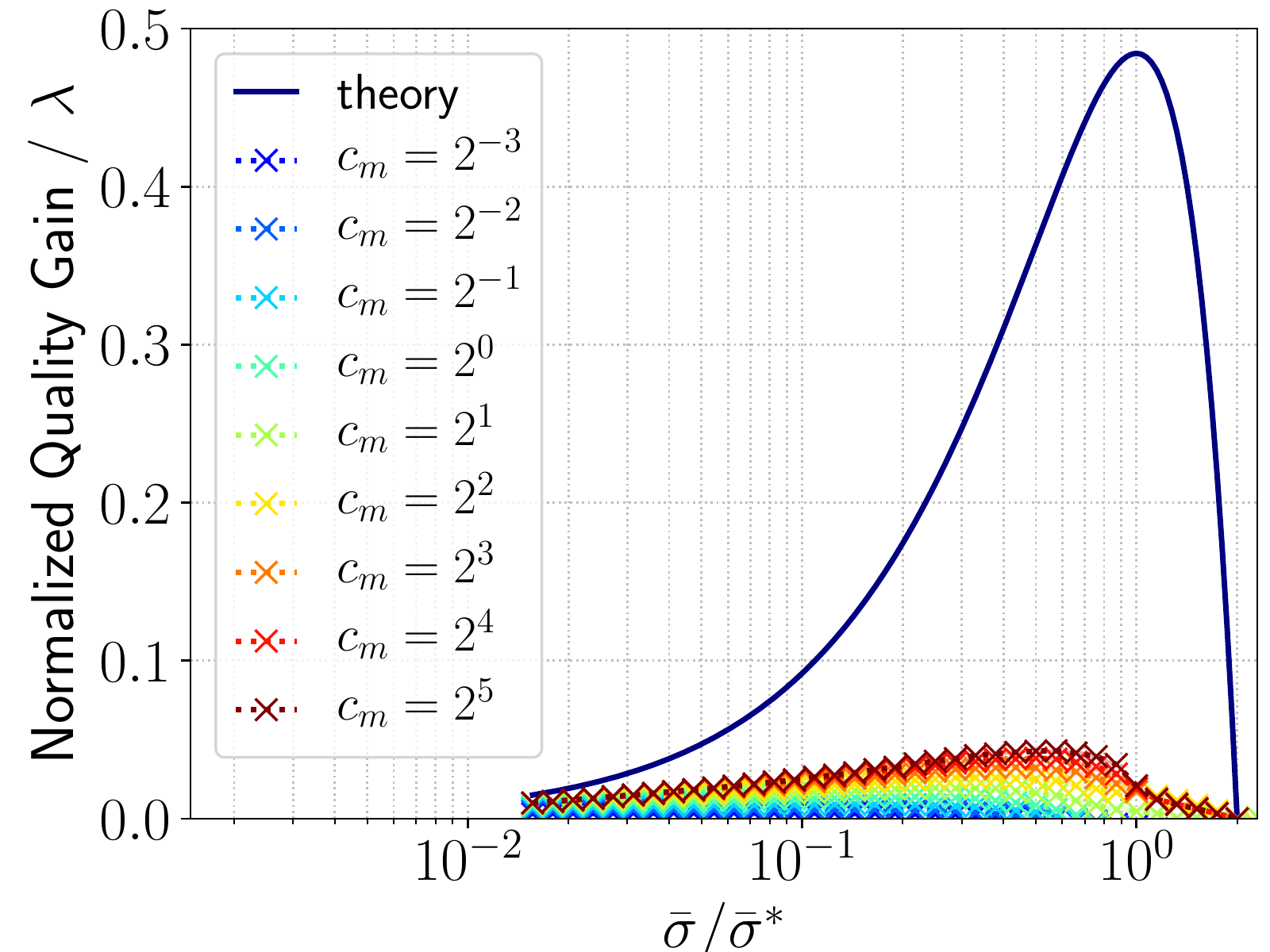}
\end{subfigure}%
\begin{subfigure}{\figsize}
\includegraphics[width=\hsize]{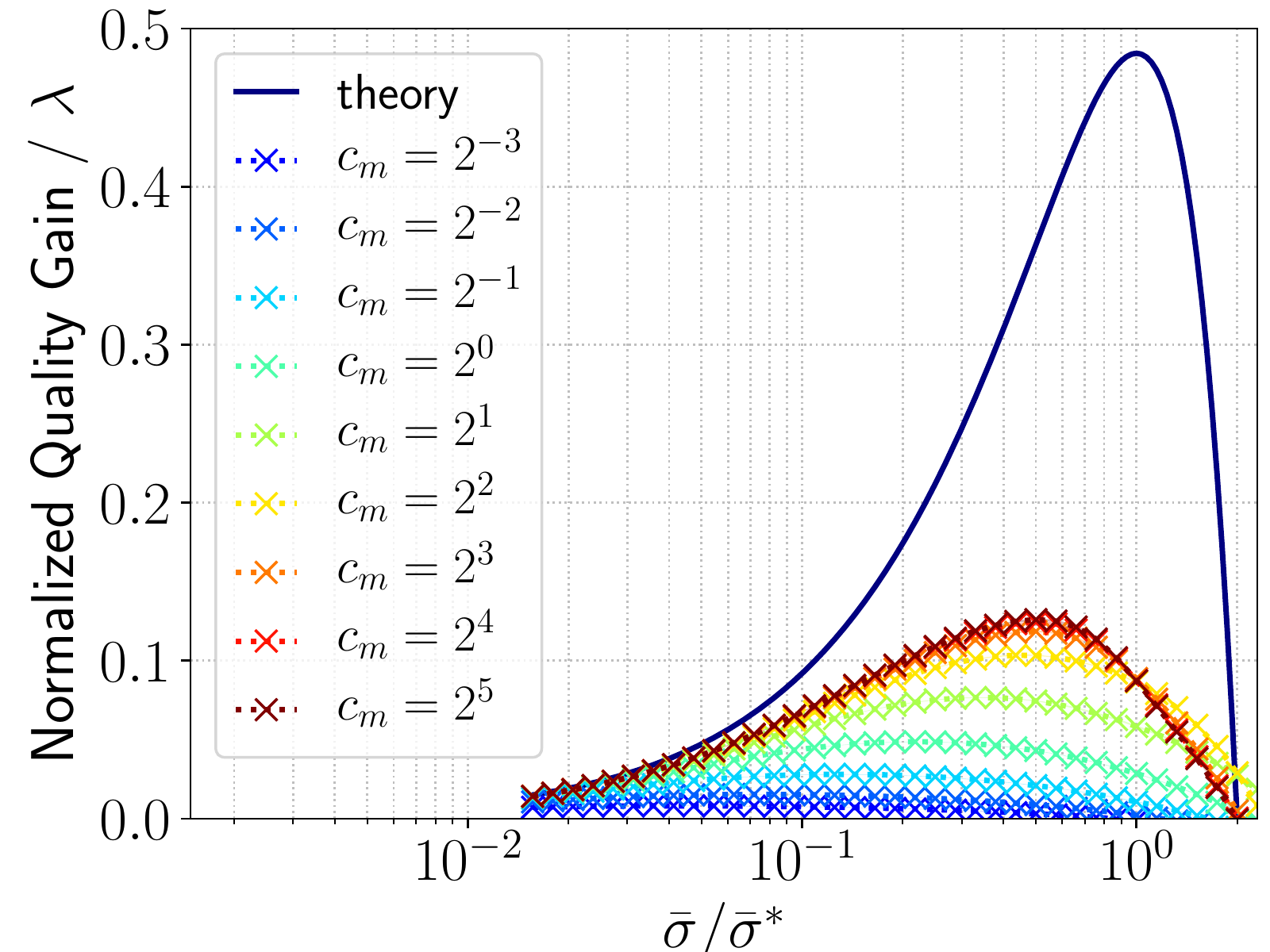}
\end{subfigure}%
\begin{subfigure}{\figsize}
\includegraphics[width=\hsize]{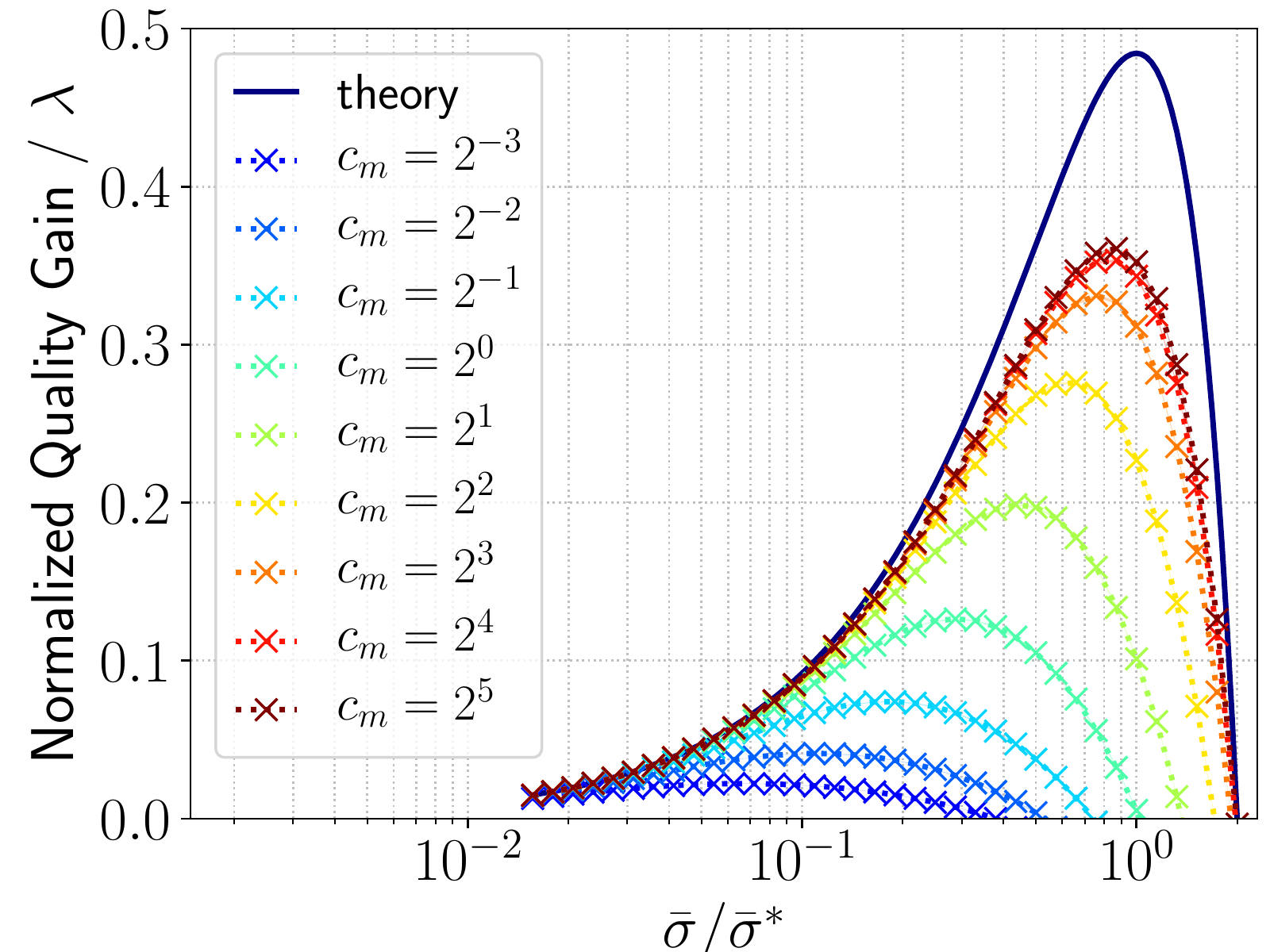}
\end{subfigure}%
\\
\begin{subfigure}{\figsize}
\includegraphics[width=\hsize]{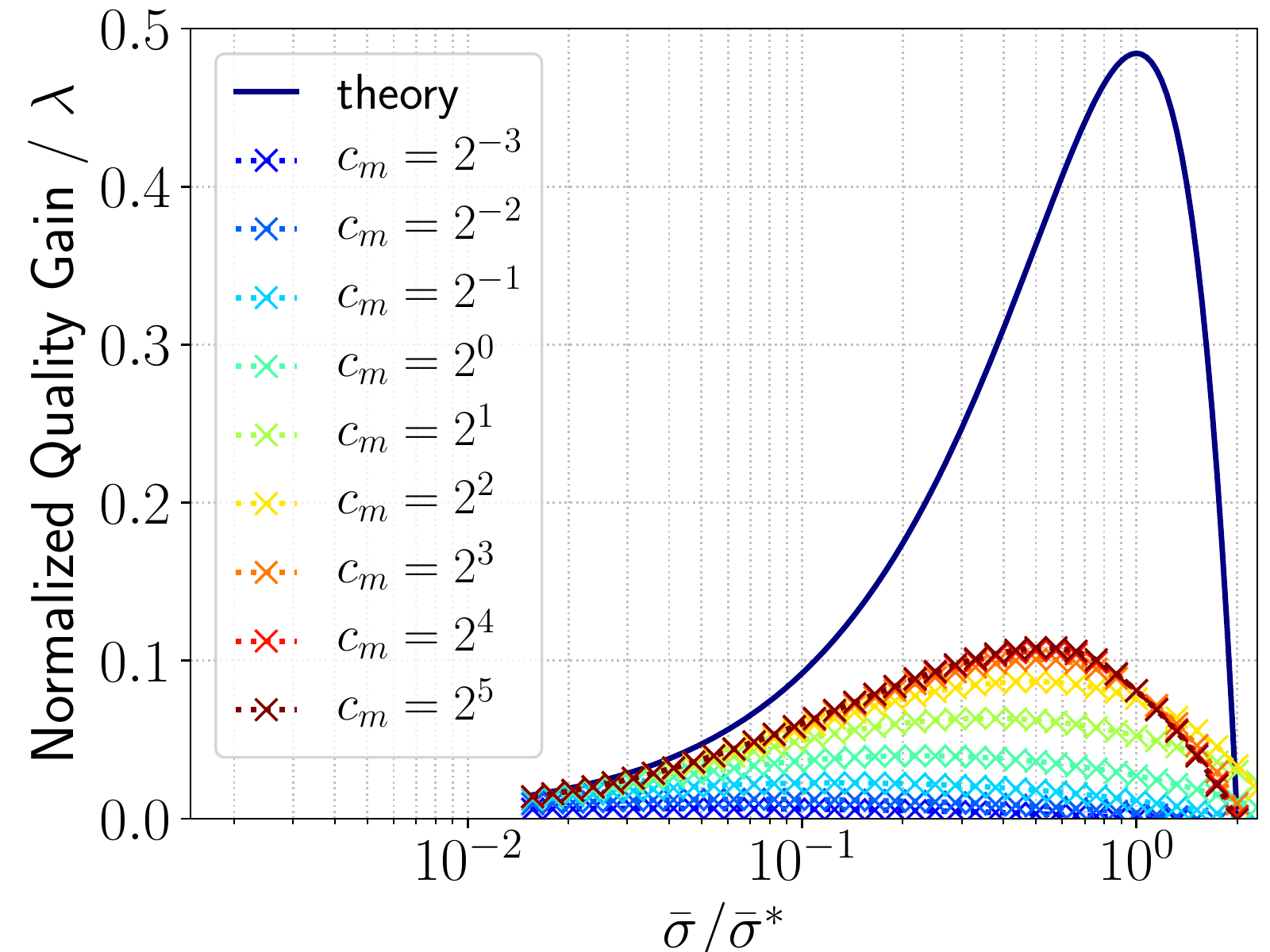}
\end{subfigure}%
\begin{subfigure}{\figsize}
\includegraphics[width=\hsize]{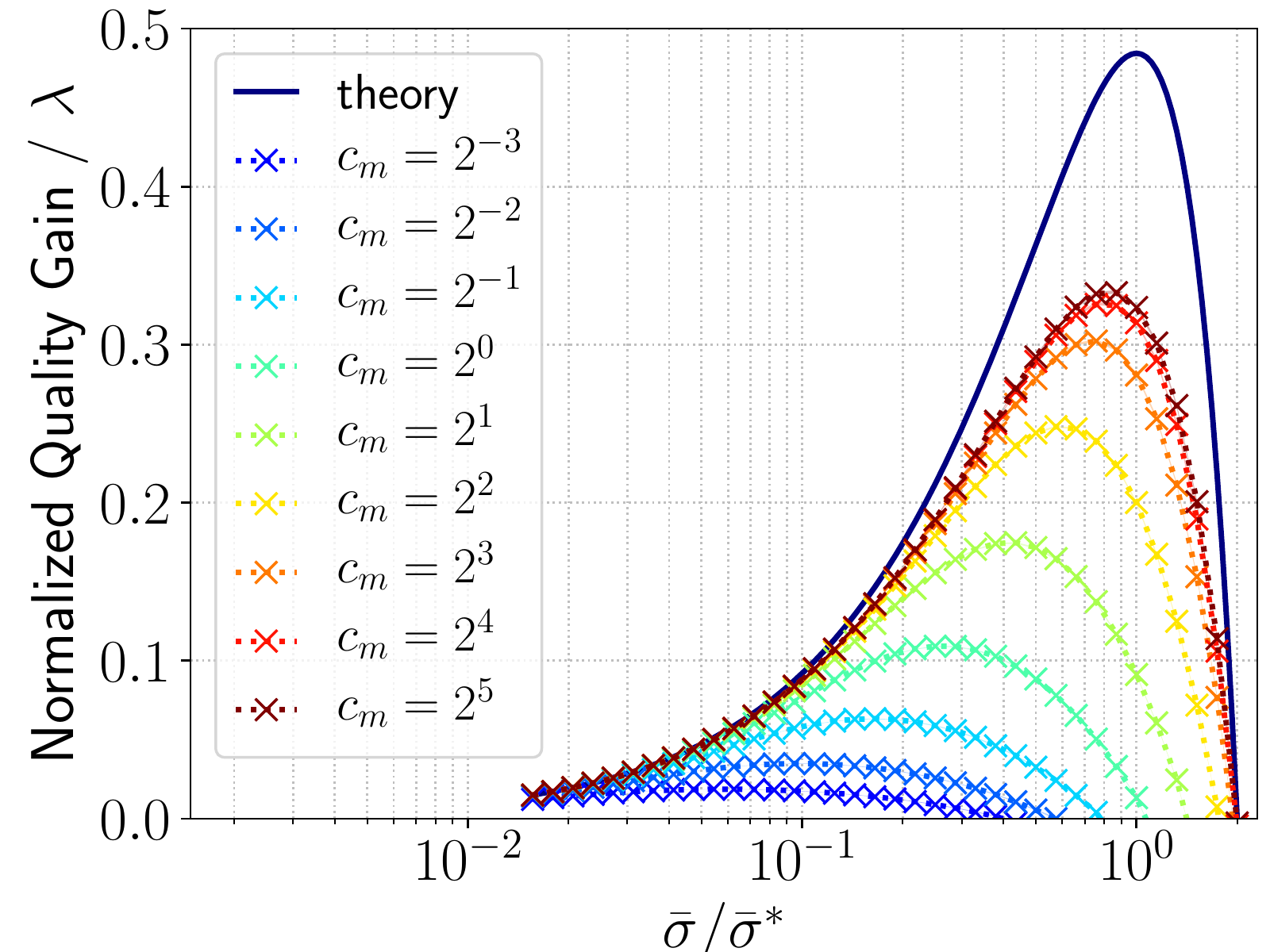}
\end{subfigure}%
\begin{subfigure}{\figsize}
\includegraphics[width=\hsize]{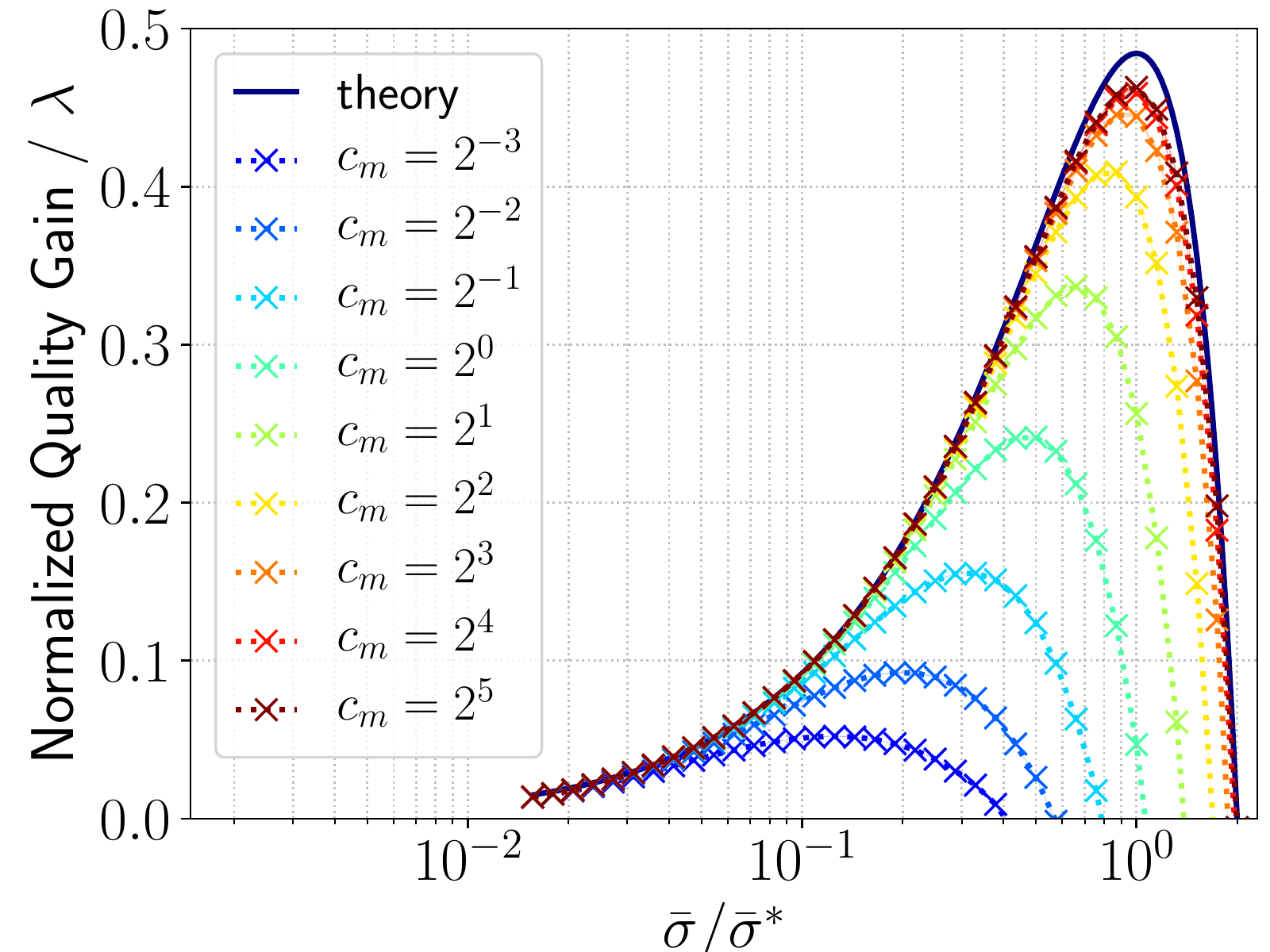}
\end{subfigure}%
\caption{Empirical normalized quality gain on four convex quadratic functions, Sphere, Discus, Ellipsoid and Cigar (from top to bottom) of dimension $N = 10$, $100$ and $1000$ (from left to right). The optimal weights \eqref{eq:optw} \rev{are} used and $\lambda = 100$.}
\label{fig:opt100}
\end{figure}

\subsection{Experiments}

To see the effect of the eigenvalue distribution of $\Hess$, we run the experiments. Four quadratic functions are considered: Sphere, Discus, Ellipsoid, Cigar functions of $N = 10$, $100$, $1000$ dimensions. The ES with the weights optimal for the infinite dimensional sphere, \eqref{eq:optw}, and the optimal normalized step-size $\ns^*$ derived for $\cm \to \infty$, \eqref{eq:optns-gen}, times a constant factor is run for $T = 10000$ iterations. The empirical normalized quality gain is estimated as $(2 / T) \sum_{t=T/2}^{T-1} \big[ f(\xmean^{(t)}) - f(\xmean^{(t+1)}) \big] / \big[ f(\xmean^{(t)}) g(\xmean^{(t)}) \big]$.
The mean vector is initialized randomly by the normal distribution $\mathcal{N}(\bm{0}, \eye)$. Eleven independent runs are conduced for each setting. The results are compared with $\asynqg$, which is supposed to approximate the empirical normalized quality gain for $\cm \gg 1$ and $N \gg 1$. Note that $\ns^*$ in \eqref{eq:optns-gen} and $\asynqg$ in \eqref{eq:asynqg} depend on $\xmean$ through $\ee^\T \Hess \ee  / \Tr(\Hess)$. We replace $\ee^\T \Hess \ee  / \Tr(\Hess)$ with $d_N(\Hess) / \Tr(\Hess)$ based on the observation and the above discussion that the mean vector tends to be parallel to the eigenspace corresponding to the smallest eigenvalue of $\Hess$.\niko{Do we have any idea how much of an error this may introduce?} Figures~\ref{fig:opt} and \ref{fig:opt100} show the median \rev{(marker)} and the $10\%$-$90\%$ interval \rev{(shaded area)} of the empirical normalized quality gain for each $\cm$ and the theoretically derived normalized quality gain formula discussed above. \rev{Note that the shaded area is almost invisible, implying that the number of runs and the number of iterations are sufficient to get accurate estimates\del{see the trends}.}

We first focus on the results with $\cm = 1$ (the default setting). The empirical normalized quality gain gets closer to the normalized quality gain derived for the infinite dimensional quadratic function as $N$ increases. The approach of the empirical normalized quality gain to the theory is the fastest for the sphere function ($\Hess = \eye$). For convex quadratic functions with the same condition number of $\alpha = 10^6$, the speed of the convergence of the normalized quality gain to $\asynqg$ as $N \to \infty$ is the fastest for the cigar function, and the slowest for the discus function. This reflects the upper bound derived in Theorem~\ref{thm:main} that depends on the ratio $\Tr(\Hess^2) / \Tr(\Hess)^2$, whose value is summarized in Table~\ref{tbl:hess}. For the cigar function $\Tr(\Hess^2) / \Tr(\Hess)^2$ is close to $1/(N-1)$, while for the discus function it is very close to $1$ for $N \ll \alpha$ and we do not observe significant difference between results on different $N$.

A larger $\cm$ led to a better empirical normalized quality gain for all cases, i.e., the empirical normalized quality gains became monotonically closer to the theoretical curve\footnote{Figure~4 in the previous work \cite{Akimoto2017foga} shows non-monotonic change of empirical normalized quality gain over $\cm$, whereas in Figures~\ref{fig:opt} and \ref{fig:opt100} of this paper shows a monotonic behavior. In Figure~4 in \cite{Akimoto2017foga} $\ns^*$ is approximated with \eqref{eq:optns}, whereas in the figures of this paper $\ns^*$ is computed with \eqref{eq:optns-gen}. The difference between these two quantities is less pronounced as $N$ increases. The monotonic changes of the graphs \rev{are} because $\ns^*$ in \eqref{eq:optns-gen} approximates the optimal normalized step-size better than \eqref{eq:optns} on a finite dimensional quadratic function.}. As $\cm$ becomes greater while the normalized step-size is fixed, the ratio $\sigma / \norm{\xmean}$ becomes smaller and tends to zero in the limit $\cm \to \infty$. As Corollary~\ref{cor:sig} implies, the normalized quality gain converges to $\asynqg$ in the limit $\sigma / \norm{\xmean} \to 0$. Therefore, the results reflect the theory. Moreover, the theoretically optimal normalized step-size $\ns^*$ well approximate\rev{s} the empirically optimal normalized step-size $\ns$ that maximize the normalized quality gain for all cases when $\cm \geq 1$. As $\cm$ becomes smaller, the empirically optimal normalized step-size $\ns$ becomes smaller compared to $\ns^*$. \new{Note that the difference of the empirical normalized quality gain curves on the sphere function comes only from the randomness of the length of each step $Z$. If we replace $Z$ with $(\E[\norm{Z}]/\norm{Z}) Z$ in the algorithm, the selection is independent of $\cm$ values and is determined by the inner product of the step and the gradient of the objective function at the mean vector. Then, the effect of $\cm$ goes away. }

Comparing Figure~\ref{fig:opt} and Figure~\ref{fig:opt100}, the empirical curves are closer to the theoretical curves in Figure~\ref{fig:opt}. It reflects the fact that the bound between the normalized quality gain and the asymptotic normalized quality gain derived in Theorem~\ref{thm:main} typically increases as $\lambda$ increases. To approximate the theoretical curve, a larger $\cm$ value is required when $\lambda$ is greater. The peaks of the empirical curves tend to be achieved at a smaller normalized step-size as \rev{$\lambda$ or $\cm$ becomes greater or smaller, respectively}.

\section{Conclusion}\label{sec:conc}

We perform the quality gain analysis of the weighted recombination evolution strategy (ES) on a convex quadratic function. Differently from the previous works, where the limit for the search space dimension $N$ to infinity is considered, we derive the error bound between the so-called normalized quality gain and its limit expression for the finite dimension. We show that the bound converges to zero when (I) $N \to \infty$ as long as the Hessian $\Hess$ of the objective function satisfies $\Tr(\Hess^2) / \Tr(\Hess)^2 \to 0$, \new{or}\del{and} when (II) $\sigma / \norm{\xmean} \to 0$. The limit expression of the normalized quality gain reveals that the optimal recombination weights are independent of the Hessian matrix in the limit (I). Moreover, if the effective variance selection mass $\muw$ is sufficiently large, \todo{this sentence should be revised}the optimal recombination weights for the limit (II) admits the same optimal recombination weights. The optimal normalized step-size for given recombination weights is derived. In the limit (I) the optimal normalized step-size is independent of $\Hess$, while the optimal step-size is proportional to the length of the gradient at the distribution mean. The limit (II) reveals the dependencies of the normalized step-size on $N$ and $\muw$.

The quality gain analysis provides a useful insight into the algorithmic behavior, even though it does not take into account the adaptation of the step-size. Knowing the optimal recombination weights $(w_k^*)$ directly contributes to the optimal parameter setting. \new{On the contrary}\del{Compared to it}, knowing the optimal normalized step-size $\ns^*$ does not lead to the optimal step-size control. \new{This}\del{It} is because the optimal scale-invariant step-size $\sigma^*$ in Definition~\ref{def:ns} where $\ns$ is replaced with its optimal value $\ns^*$ is proportional to $\norm{\nabla f(\xmean)}$, which is unknown to the algorithm. The optimal step-size, however, is useful to evaluate step-size control mechanisms and to see how close to the optimal situation the step-size control mechanism is. \rev{Some theoretical insights into the adaptation mechanism of practical step-size adaptive methods is provided by the approached referred to as ``dynamical system" approach by its authors.}\del{The dynamical system approach further provides a theoretical insight into the adaptation mechanism of the practical step-size adaptation.} We refer to \cite{beyer2014dynamics,Beyer2016ec} for the recent development in the dynamical system approach. An important remaining question is: what is the optimal parameter update? \new{Neither} the quality gain analysis \new{nor}\del{and} the dynamical system approach will\del{ not} answer this question. The optimal step-size on a quadratic function is revealed in this paper, however, it depends \rev{on} the norm of the gradient, which is unknown to the real algorithm. A methodology to analyze the optimal update, rather than the optimal parameter value, hopefully including the covariance matrix update is desired in\del{ the} future work.

\paragraph*{\bf Acknowledgement}
The authors thank \emph{Dagstuhl seminar 17191: Theory of Randomized Optimization Heuristics} for providing the opportunity to present and discuss this work. This work is partially supported by JSPS KAKENHI Grant Number 15K16063.

\section*{References}
{\small
  
}%

\appendix

\section{Normal Order Statistics}
\label{apdx:nos}
Here we summarize some important properties of the moments of normal order statistics that are useful to understand the results in the paper. 

The first moments of the normal order statistics have the properties: $\E[\NN_{i:\lambda}] \leq \E[\NN_{i+1:\lambda}]$, $\E[\NN_{i:\lambda}] = - \E[\NN_{\lambda + 1 - i:\lambda}]$, and $\sum_{i=1}^{\lambda} \E[\NN_{i:\lambda}] = 0$. The second (product) moments of the normal order statistics have the following properties: $\sum_{j=1}^{\lambda} \E[\NN_{i:\lambda}\NN_{j:\lambda}] = 1$, $\sum_{i=1}^{\lambda} \E[\NN_{i:\lambda}^2] = \sum_{i=1}^{\lambda}\sum_{j=1}^{\lambda} \E[\NN_{i:\lambda}\NN_{j:\lambda}] = \lambda$, and $\E[\NN_{i:\lambda}\NN_{j:\lambda}] = \E[\NN_{j:\lambda}\NN_{i:\lambda}] = \E[\NN_{\lambda+1-i:\lambda}\NN_{\lambda+1-j:\lambda}] = \E[\NN_{\lambda+1-j:\lambda}\NN_{\lambda+1-i:\lambda}]$.

Here we summarize useful inequalities about order statistics that are all listed in Section~35.1.6 of \cite{dasgupta2008asymptotic}. The positive dependency inequality tells that the order statistics are non-negatively correlated, $\Cov(\NN_{i:\lambda}, \NN_{j:\lambda}) = \E[\NN_{i:\lambda}\NN_{j:\lambda}] - \E[\NN_{i:\lambda}]\E[\NN_{j:\lambda}] \geq 0$. Together with $\sum_{j=1}^{\lambda} \Cov(\NN_{i:\lambda}, \NN_{j:\lambda}) = \sum_{j=1}^{\lambda} \E[\NN_{i:\lambda}\NN_{j:\lambda}] = 1$, we have $0 \leq \Cov(\NN_{i:\lambda}, \NN_{j:\lambda}) \leq 1$. It implies $\E[\NN_{i:\lambda}]\E[\NN_{j:\lambda}] \leq \E[\NN_{i:\lambda}\NN_{j:\lambda}] \leq \E[\NN_{i:\lambda}]\E[\NN_{j:\lambda}] + 1$.

Another important inequality is David inequality for normal distribution. It tells that $\Phi^{-1}\big(i / (\lambda + 1)\big) \leq \E[\NN_{i:\lambda}] \leq \min\big\{\Phi^{-1}\big(i / (\lambda + 0.5)\big) ,\ \Phi^{-1}\big((i-0.5) / \lambda\big) \big\}$, where $\Phi$ is the c.d.f.~of $\mathcal{N}(0, 1)$. 
It proves an asymptotically tight approximation (Blom's approximation) $\E[\mathcal{N}_{i:\lambda}] \approx \Phi^{-1}\big(\frac{i - \alpha}{\lambda - 2 \alpha + 1}\big)$ with $\alpha = 0.375$ for $i \leq \lceil \lambda / 2 \rceil$.
The following asymptotic equalities are also used (see Example 8.1.1 in \cite{dasgupta2008asymptotic})%
{\small\begin{equation}%
\lim_{\lambda \to \infty} \frac{\E[\NN_{\lambda:\lambda}] - \E[\NN_{1:\lambda}]}{2 (2 \ln(\lambda))^{\frac12}} = 1,\quad
\lim_{\lambda\to\infty}\frac{1}{\lambda}\sum_{i=1}^{\lambda} \abs{\E[\NN_{i:\lambda}]} = \frac{2^{\frac12} }{\pi^{\frac12} },\quad 
\lim_{\lambda\to\infty}\frac{1}{\lambda}\sum_{i=1}^{\lambda} \E[\NN_{i:\lambda}]^2 = 1 \enspace.
\label{eq:noslim}
\end{equation}}%

Let $\eone_{(\lambda)}$ be the $\lambda$ dimensional column vector whose $i$-th component is $\E[\NN_{i:\lambda}]$ and $\etwo_{(\lambda)}$ be the $\lambda$ dimensional symmetric matrix whose $(i, j)$-th element is $\E[\NN_{i:\lambda} \NN_{j:\lambda}]$. The covariance matrix $\etwo_{(\lambda)} - \eone_{(\lambda)}\eone_{(\lambda)}^\T$ is by definition nonnegative definite. It implies the eigenvalues of $\etwo_{(\lambda)}$ are all nonnegative. Moreover, from the above mentioned fact derives that the sum of the eigenvalues is $\Tr(\etwo_{(\lambda)}) = \sum_{i=1}^{\lambda}\sum_{j=1}^{\lambda} \Cov(\NN_{i:\lambda}, \NN_{j:\lambda}) = \lambda$. Furthermore, the third asymptotic relation of \eqref{eq:noslim} reads $\lim_{\lambda \to \infty} \Tr(\eone_{(\lambda)}\eone_{(\lambda)}^\T)/\lambda = \lim_{\lambda \to \infty} \norm{\eone_{(\lambda)}}^2/\lambda = 1$. It implies, for any $\bm{x} \in \R^{\lambda}\setminus\{\bm{0}\}$, we have
{\small\begin{equation}
  \lim_{\lambda \to \infty} \frac{\bm{x}^\T \etwo_{(\lambda)}\bm{x}}{\lambda \norm{\bm{x}}^2} = \lim_{\lambda \to \infty} \frac{(\bm{x}^\T \eone_{(\lambda)})^2}{\lambda \norm{\bm{x}}^2} = \lim_{\lambda \to \infty} \frac{ (\bm{x}^\T\eone_{(\lambda)})^2 }{ \norm{\bm{x}}^2 \norm{\eone_{(\lambda)}}^2 }\enspace.
  \label{eq:etwoapprox}
\end{equation}}%

\section{Proofs and Derivations}
\label{apdx:proofs}

\subsection{Proof of Proposition~\ref{prop:optsig}}
\label{apdx:prop:optsig}
\begin{proof}
\newcommand{\vv}{\Delta}
Let $\vv = \cm \sum_{i=1}^{\lambda} W(i; (\xmean + \sigma Z_k)_{k=1}^{\lambda}) Z_{i}$, where $(Z_i)_{i=1}^{\lambda}$ are independent and $N$-variate standard normally distributed random vectors. Then,
{\small\begin{align*}
  \phi(\xmean, \sigma)
  &= 1 - \E[f^*(\xmean + \sigma \vv)] / f^*(\xmean) \\
  &= 1 - \E[f(\xmean + \sigma \vv  - x^*)] / f(\xmean - x^*) 
  \\
  &= 1 - \alpha^{-n} \E[f(\alpha\cdot(\xmean + \sigma \vv  - x^*)) / \alpha^{-n}f(\alpha\cdot (\xmean  - x^*)) 
  \\
  &= 1 - \E[f(\alpha\cdot(\xmean + \sigma \vv  - x^*)) ] / f(\alpha\cdot (\xmean  - x^*)) 
  \\
  &= 1 - \E[f^*(x^* + \alpha\cdot(\xmean  - x^*) + \alpha \sigma \vv ) / f^*(x^* + \alpha\cdot (\xmean  - x^*)) 
  \\
  &= \phi(x^* + \alpha (\xmean - x^*), \alpha \sigma) 
\enspace.
\end{align*}}%
Note that $\phi(x^* + (\xmean - x^*), \sigma) = \phi(\xmean, \sigma)$. That is, the quality gain is scale invariant around $(x^*, 0)$. Moreover, the above equality implies that $\argmax_{\sigma} \phi(x^* + (\xmean - x^*), \sigma) =  \argmax_{\sigma} \phi(x^* + \alpha (\xmean - x^*), \alpha \sigma)$, i.e., the optimal step-size at $x^* + \alpha (\xmean - x^*)$ is $\alpha$ times greater than the optimal step-size at $x^* + (\xmean - x^*)$. Therefore, the optimal step-size as a function of $\xmean - x^*$ is homogeneous of degree $1$, i.e., $\sigma^*(\alpha\cdot(\xmean - x^*)) = \alpha \sigma^*(\xmean - x^*)$. 
\end{proof}

\subsection{Proof of Lemma~\ref{lem:0}}
\label{apdx:lem:0}
\begin{proof}
Since $(X_{k})_{k=1}^{\lambda}$ are independent and normally distributed, the \rev{conditional probability of $\ind{f(X_{k}) < f(X_{i})} = 1$ given $X_{i}$ for any $k \neq i$} is $F_{f}(f(X_{i}))$. Then, the probability of $\sum_{k=1}^{\lambda} \ind{f(X_{k}) \leq f(X_{i})}$ being $a$ for $a \in \llbracket 1, \lambda \rrbracket$ is given by $\pb(a-1; \lambda-1, p)$
with $p = F_{f}(f(X_i))$. 
Then, for any $\alpha \geq 0$,
{\small\begin{align*}
\E_{i}[W(i; (X_k)_{k=1}^{\lambda})^{\alpha}] 
\textstyle= \sum_{k = 1}^{\lambda} w_{k}^{\alpha} \pb(k-1; \lambda-1, p)
\enspace.
\end{align*}}%

Similarly, the joint \rev{distribution} of $\sum_{k=1}^{\lambda} \ind{f(X_{k}) \leq f(X_{i})}$ and $\sum_{k=1}^{\lambda} \ind{f(X_{k}) \leq f(X_{j})}$ is derived. Due to the symmetry between $i$ and $j$, we can assume w.l.o.g.\ that $f(X_i) \leq f(X_j)$. Then, the joint probability of $\sum_{k=1}^{\lambda} \ind{f(X_{k}) \leq f(X_{i})} = a$ and $\sum_{k=1}^{\lambda} \ind{f(X_{k}) \leq f(X_{j})} = b$ for $a, b \in \llbracket 1, \lambda \rrbracket$ is given by $\pt(a-1, b-a-1; \lambda-2, p, q-p)$
with $p = F_{f}(f(X_i))$ and $q = F_{f}(f(X_j))$ if $a < b$, and zero otherwise. 
Then,
{\small\begin{equation*}
\E_{i, j}[W(i; (X_k)_{k=1}^{\lambda})W(j; (X_k)_{k=1}^{\lambda}) ] \\
= \sum_{m = 1}^{\lambda-1}\sum_{l = m+1}^{\lambda}w_{m}w_{l} \pt(m-1, l-m-1; \lambda-2, p, q-p)
\enspace.
\end{equation*}}%
This ends the proof.
\end{proof}

\subsection{Proof of Lemma~\ref{lem:u1lip}}
\label{apdx:lem:u1lip}
\begin{proof}
The derivative of $u_1$ is $\sum_{k=1}^{\lambda} w_{k} \binom{\lambda-1}{k-1} \frac{\rmd }{\rmd p}[ p^{k-1}(1-p)^{\lambda-k} ]$, where
{\small\begin{equation*}
\textstyle \frac{\rmd }{\rmd p}[ p^{k-1}(1-p)^{\lambda-k} ] =
   (k-1) p^{k-2} (1 - p)^{\lambda - k} - (\lambda - k) p^{k-1} (1 - p)^{\lambda - k - 1}
\enspace.
\end{equation*}}%
Substituting the derivatives and rearranging the terms, we obtain
{\small\begin{align*}
         \textstyle \frac{\rmd u_1(p)}{\rmd p}
 &\textstyle= (\lambda - 1) \sum_{k = 1}^{\lambda-1} (w_{k+1} - w_{k}) \binom{\lambda - 2}{k - 1} p^{k - 1}(1 - p)^{\lambda - k - 1} 
 \enspace. 
\end{align*}}%
The Lipschitz constant $L_1$ is the supremum of the absolute value of the derivative derived above. It completes the proof for the $\ell_1$-Lipschitz continuity of $u_1$ and its Lipschitz constant.
Since $u_2$ is equivalent to $u_1$ if $w_i$ are replaced with $w_i^2$ in the definition of $u_1$, we have the $\ell_1$-Lipschitz continuity of $u_2$ and its Lipschitz constant by replacing $w_i$ with $w_i^2$ in the above argument.

The partial derivative of $u_3$ with respect to $p$ is
{\small\begin{equation*}
  \textstyle  \sum_{k=1}^{\lambda-1}\sum_{l=k+1}^{\lambda} w_{k} w_{l} \binom{\lambda-2}{l-2}\binom{l-2}{k-1} \frac{\partial }{\partial p}[ \min(p, q)^{\rev{k-1}}\abs{q - p}^{l - k - 1}(1-\min(p, q))^{\lambda-l} ] \enspace,
\end{equation*}}
where
{\small\begin{multline*}
\frac{\partial }{\partial p} \min(p, q)^{k-1}\abs{q - p}^{l - k - 1}(1-\min(p, q))^{\lambda-l}  \\
=
  \begin{cases}
	   [(k-1) (q - p)  - (l - k - 1) p  ] p^{k - 2}(q - p)^{l - k - 2}(1-q)^{\lambda-l} & (p < q)\\
	   [(l - k - 1)(1 - p)  - (\lambda - l) (p - q)  ] q^{k - 1}(p - q)^{l - k - 2}(1-p)^{\lambda-l-1} & (p > q)\\
  \end{cases}
\enspace.
\end{multline*}}%
Substituting the derivatives and rearranging the terms, we obtain
{\small\begin{multline*}
  \textstyle \rev{\frac{1}{\lambda - 2}} \frac{\partial u_3(p, q)}{\partial p} \\
  = 
  \begin{cases}
	   \sum_{k=1}^{\lambda-2}\sum_{l=k+2}^{\lambda} w_l (w_{k+1} - w_{k}) \rev{\binom{\lambda-3}{l-3}\binom{l-3}{k-1}} p^{k - 1}(q - p)^{l - k - 2}(1-q)^{\lambda-l} & (p < q) \\
	   \sum_{k=1}^{\lambda-2}\sum_{l=k+2}^{\lambda} w_k (w_{l} - w_{l-1}) \rev{\binom{\lambda-3}{l-3}\binom{l-3}{k-1}} q^{k - 1}(p - q)^{l - k - 2}(1-p)^{\lambda-l} & (p > q) \\
  \end{cases}
\end{multline*}}%
Since $u_3$ is differentiable with respect to $p$ almost everywhere in $(0, 1)$, it is Lipschitz continuous with respect to $p$. Its Lipschitz constant is $\sup_{q \in (0, 1)}\sup_{p \in (0, q) \cup (q, 1)}\abs[\big]{\frac{\partial u_3(p, q)}{\partial p}}$. Due to the symmetry, $u_{3}(p, q)$ is $\ell_1$-Lipschitz continuous on $[0, 1]^2$ with the Lipschitz constant $L_3 = \sup_{q \in (0, 1)}\sup_{p \in (0, q) \cup (q, 1)}\abs[\big]{\frac{\partial u_3(p, q)}{\partial p}}$. This completes the proof.
\end{proof}

\subsection{Upper bounds of Lipschitz constants}
\label{apdx:Lbound:deriv}

For a general weight scheme, we have the following trivial upper bounds for the Lipschitz constants derived in Lemma~\ref{lem:u1lip}, 
{\small\begin{align}
L_1 &\leq (\lambda - 1)\max_{k \in \llbracket 1, \lambda - 1\rrbracket} \abs{w_{k+1} - w_k}
\enspace,
\label{eq:l1bound}
\\
L_2 &\leq (\lambda - 1)\max_{k \in \llbracket 1, \lambda - 1\rrbracket} \abs{w_{k+1}^2 - w_k^2}
\enspace,
\label{eq:l2bound}
\\
L_3 &\leq (\lambda - 2)\max_{k \in \llbracket 1, \lambda\rrbracket} \max_{l \in \llbracket 1, k - 2\rrbracket \cup \llbracket k +1, \lambda - 1\rrbracket} \abs{w_k}\cdot\abs{w_{l+1} - w_l} 
\enspace.
\label{eq:l3bound}
\end{align}}%
These upper bounds are straight-forward from the facts $\sum_{k=1}^{\lambda-1}\pb(k-1; \lambda-2, p) = 1$ and $\sum_{k=1}^{\lambda-2} \sum_{l=k+2}^{\lambda} \pt(k-1, l-k-2; \lambda-3, \min(p, q), \abs{q - p}) = 1$.

For the truncation weights with $3 \leq \mu \leq \lambda - 2$, we can obtain better bounds.
The bounds of the factorial of $n \geq 1$ known by Robbins \cite{Robbins1955}, namely,
{\small\begin{equation*}
(2 \pi n)^\frac12 \left(\frac{n}{e}\right)^n \exp\left(\frac{1}{12n + 1}\right)
< n! <
(2 \pi n)^\frac12 \left(\frac{n}{e}\right)^n \exp\left(\frac{1}{12n} \right)
\end{equation*}}%
gives us an upper bound of $\binom{n}{k}$ for $0< k < n$
{\small\begin{equation}
\textstyle   \binom{n}{k} < \left( \frac{n}{2\pi k (n - k)}\right)^\frac12 \left(\frac{n}{k}\right)^k\left(\frac{n}{n - k}\right)^{n - k} \enspace.
    \label{eq:binomupper}
  \end{equation}}%
\rev{Here we used $ \exp\big(\frac{1}{12n} - \frac{1}{12k + 1} - \frac{1}{12(n-k) + 1}\big) < 1$.}
On the other hand, we have for $0< k < n$
{\small\begin{equation}
\textstyle  \sup_{0\leq p \leq 1} p^k(1 - p)^{n - k} = \left(\frac{k}{n}\right)^k\left(\frac{n - k}{n}\right)^{n - k}
    \label{eq:binomsup}    
\enspace.
\end{equation}}%
Since $w_{k+1} - w_k = - 1/\mu$ for $k = \mu$ and $w_{k+1} - w_k = 0$ for $k \neq \mu$, we have for $3 \leq \mu \leq \lambda - 2$, 
{\small\begin{align*}
L_1 &\textstyle= \sup_{0 < p < 1} \abs*{(\lambda - 1) \frac1\mu \binom{\lambda - 2}{\mu - 1} p^{\mu - 1}(1 - p)^{\lambda - \mu - 1}}
\notag\\
    &\textstyle= \frac{\lambda - 1}{\mu} \binom{\lambda - 2}{\mu - 1} \left(\frac{\mu - 1}{\lambda-2}\right)^{\mu - 1}\left(\frac{\lambda - \mu - 1}{\lambda - 2}\right)^{\lambda - \mu - 1}
\notag\\
    &\textstyle\leq \frac{\lambda - 1}{\mu} \left( \frac{\lambda - 2}{2\pi (\mu - 1) (\lambda - \mu - 1)}\right)^\frac12 
      \enspace.
       \end{align*}}%
Analogously, since $w_{k+1}^2 - w_k^2 = - 1/\mu^2$ for $k = \mu$ and $w_{k+1}^2 - w_k^2 = 0$ for $k \neq \mu$, we obtain the bound of $L_2$: $L_2 < \big[(\lambda - 1 ) / \mu^2\big]\cdot \big[ (\lambda - 2) / (2\pi (\mu - 1) (\lambda - \mu - 1))\big]^{1/2}$.
     
Moreover, since $w_k(w_l - w_{l-1}) = -1/\mu^2$ for $l = \mu+1$ and $w_k(w_l - w_{l-1}) = 0$ otherwise, we have
{\small\begin{align*}
L_3 
&\textstyle= \sup_{q \in (0, 1)} \sup_{p \in (q, 1)} \sum_{k=1}^{\mu-1}\frac{\lambda - 2}{\mu^2} 
\binom{\lambda - 3}{\mu-2}\binom{\mu-2}{k - 1}
 q^{k - 1}(p - q)^{\mu - k - 1}(1-p)^{\lambda-\mu-1}
 \notag
 \\
 &\textstyle= \sup_{p \in (0, 1)} \frac{(\lambda - 2)}{\mu^2} 
\binom{\lambda - 3}{\mu-2}(1-p)^{\lambda-\mu-1} \sup_{q \in (0, p)} \sum_{k=1}^{\mu-1}\binom{\mu-2}{k - 1}
 q^{k - 1}(p - q)^{\mu - k - 1}
 \notag
  \\
 &\textstyle=\frac{(\lambda - 2)}{\mu^2} 
 \binom{\lambda - 3}{\mu-2}\sup_{p \in (0, 1)} (1-p)^{\lambda-\mu-1}p^{\mu - 2}
 \notag
 \\
 &\textstyle=\frac{(\lambda - 2)}{\mu^2} 
 \binom{\lambda - 3}{\mu-2}
 \left(\frac{\mu - 2}{\lambda - 3}\right)^{\mu - 2}\left(\frac{\lambda-\mu-1}{\lambda - 3}\right)^{\lambda-\mu-1}
 \notag
 \\
 &\textstyle\leq\frac{(\lambda - 2)}{\mu^2} 
 \left(\frac{\lambda - 3}{2 \pi (\mu - 2)(\lambda - \mu - 1)}\right)^\frac{1}{2}
\enspace.
       \end{align*}}%
Here we used \eqref{eq:binomupper}, \eqref{eq:binomsup}, and the binomial relation $\sum_{k=1}^{\mu-1} \binom{\mu-2}{k - 1}
 q^{k - 1}(p - q)^{\mu - k - 1} = p^{\mu - 2}$.

\subsection{Proof of Lemma~\ref{lem:weak}}
\label{apdx:lem:weak}
\begin{proof}
If $\alpha = 1$, then $G(\alpha) = 1$, and the inequality is trivial. Hence, we assume $\alpha < 1$ in the following. 

Remember that $H_N = \Ze + h(Z)$. The absolute difference between $F_{N}(t)$ and $\Phi(t)$ is rewritten as follows
{\small\begin{align*}
\MoveEqLeft[1] \abs{F_{N}(t) - \Phi(t)} \\
&= \abs{\Pr[H_N  \leq t] - \Pr[\Ze \leq t]}
\\
&= \abs{\Pr[\Ze + h(Z) \leq t ] - \Pr[\Ze \leq t]}
\\
&= \Pr[h(Z) \geq 0 ~\text{and}~ t - h(Z) \leq \Ze \leq t ] + \Pr[h(Z) \leq 0 ~\text{and}~ t \leq \Ze \leq t - h(Z)]
  \enspace.
\end{align*}}%
With an arbitrary $\epsilon_+ > 0$, the first term on the RMS is upper bounded as
{\small\begin{align*}
\MoveEqLeft[1]\Pr[h(Z) \geq 0 ~\text{and}~ t - h(Z) \leq \Ze \leq t ] \\
&\leq \Pr[h(Z) \geq \epsilon_+] + \Pr[h(Z) < \epsilon_+ ~\text{and}~ t - h(Z) \leq \Ze \leq t ] 
\\
&\leq \Pr[h(Z) \geq \epsilon_+] + \Pr[h(Z) < \epsilon_+ ~\text{and}~ t - \epsilon_+ \leq \Ze \leq t ] 
\\
&\leq \Pr[h(Z) \geq \epsilon_+] + \Pr[t - \epsilon_+ \leq \Ze \leq t ] 
\\
&\leq \Pr[h(Z) \geq \epsilon_+] + (2\pi)^{-\frac12}\epsilon_+
  \enspace.  
\end{align*}}%
For the last inequality, we used that the density of the one-dimensional standard normal distribution is at most $(2\pi)^{-\frac12}$ and $\Ze$ is of the one-dimensional standard normal distribution. Analogously, we have for any $\epsilon_- > 0$
{\small\begin{align*}
\Pr[h(Z) \leq 0 ~\text{and}~ t \leq \Ze \leq t - h(Z)] 
&\leq \Pr[h(Z) \leq - \epsilon_-] + \Pr[t \leq \Ze \leq t + \epsilon_-] 
\\
&\leq \Pr[h(Z) \leq - \epsilon_+] + (2\pi)^{-\frac12}\epsilon_-
  \enspace.  
\end{align*}}%
Let $\tilde{h}(Z) = 2 (\cm / \ns) h(Z) = Z^\T \Hesst Z - 1$, $\tilde{\epsilon}_+ = 2 (\cm / \ns) \epsilon_+$ and $\tilde{\epsilon} _- = 2 (\cm / \ns) \epsilon_-$. Then, $\Pr[h(Z) \geq \epsilon_+] = \Pr[\tilde{h}(Z) \geq \tilde{\epsilon}_+]$ and $\Pr[h(Z) \leq - \epsilon_-] = \Pr[\tilde{h}(Z) \leq - \tilde{\epsilon}_-]$. From Lemma~1 in \cite{laurent2000} knows that for any $x \geq 0$
{\small\begin{equation}
    \begin{split}
\Pr\big[\tilde{h}(Z) \geq 2 \Tr(\Hesst^2)^\frac12 x^\frac12 + 2 \eig_1(\Hesst) x\big] &\leq \exp(-x) \enspace,\\
\Pr\big[\tilde{h}(Z) \leq - 2 \Tr(\Hesst^2)^\frac12 x^\frac12\big] &\leq \exp(-x) \enspace.
\end{split}
\label{eq:laurent}
\end{equation}}%
Let $x = \ln(1/\alpha)$ and let $\epsilon_+$ and $\epsilon_-$ such that
{\small\begin{align*}
\tilde{\epsilon}_{+} &= 2 \Tr(\Hesst^2)^\frac12 x^\frac12 + 2 \eig_1(\Hesst) x = 2 \Tr(\Hesst^2)^\frac12 \big((\ln(1/\alpha))^\frac12 + (\eig_1(\Hesst) / \Tr(\Hesst^2)^\frac12) \ln(1/\alpha)\big) 
\\
\tilde{\epsilon}_{-} &= 2 \Tr(\Hesst^2)^\frac12 x^\frac12 = 2 \Tr(\Hesst^2)^\frac12 (\ln(1/\alpha))^\frac12
\enspace.
\end{align*}}%
Then, from \eqref{eq:laurent} derives that
{\small\begin{align*}
\MoveEqLeft[2] \textstyle \Pr[h(Z) \geq \epsilon_+] + (2\pi)^{-\frac12}\epsilon_+
         \\
& \textstyle = \Pr[\tilde{h}(Z) \geq \tilde{\epsilon}_+] + (\ns \tilde{\epsilon}_+) / (2(2\pi)^\frac12 \cm)
\\
& \textstyle \leq \alpha + (\ns \tilde{\epsilon}_+) / (2(2\pi)^\frac12 \cm) 
\\
& \textstyle = \alpha + (2\pi)^{-\frac12} (\ns / \cm) \Tr(\Hesst^2)^\frac12 \big((\ln(1/\alpha))^\frac12 + \big(\eig_1(\Hesst)/ \Tr(\Hesst^2)^\frac12\big)\ln(1/\alpha)\big)
\\
& \textstyle = \alpha \big(1 + (2\pi)^{-\frac12} (\ln(1/\alpha))^\frac12 + (2\pi)^{-\frac12}\big(\eig_1(\Hesst)/ \Tr(\Hesst^2)^\frac12\big)\ln(1/\alpha) \big)
  \enspace.  
\end{align*}}%
Similarly, we have
$\Pr[h(Z) \leq - \epsilon_-] + (2\pi)^{-\frac12}\epsilon_- 
\leq \alpha \big(1 + (2\pi)^{-\frac12}(\ln(1/\alpha))^\frac12 \big)$.
Altogether, we obtain
{\small\begin{equation*}
\abs{F_{N}(t) - \Phi(t)} \leq \alpha \big(2 + (2 / \pi)^\frac12(\ln(1/\alpha))^\frac12 + (2\pi)^{-\frac12}\big(\eig_1(\Hesst) / \Tr(\Hesst^2)^\frac12\big) \ln(1/\alpha) \big)
\enspace.
\end{equation*}}%
Since the RHS of the above inequality is independent of $t$, taking the supremum of both sides over $t \in \R$, we obtain the desired inequality.
\end{proof}

\subsection{Proof of Lemma~\ref{lem:u1}}
\label{apdx:lem:u1}

\begin{proof}
First, note that $F_{f}(f(\xmean + \sigma Z)) = F_{N}(H_N)$ and $H_N = \Ze + h(Z)$. Using Lemma~\ref{lem:u1lip}, we have
{\small\begin{multline*}
\abs{\E[u_1(F_{f}(f(X))) \Ze] - \E[u_1(\Phi(\Ze)) \Ze]}
= \abs{\E[u_1(F_{N}(H_N)) \Ze] - \E[u_1(\Phi(\Ze)) \Ze]}
\\
\leq \E[\abs{u_1(F_{N}(H_N)) - u_1(\Phi(\Ze))} \cdot \abs{\Ze}]
\leq L_1 \E[\abs{F_{N}(H_N) - \Phi(\Ze)} \cdot \abs{\Ze}]
  \enspace.  
\end{multline*}}%
Noting that $\Phi$ is Lipschitz continuous with the Lipschitz constant $(2\pi)^{-\frac12}$, we have $\abs{\Phi(H_N) - \Phi(\Ze)} \leq (2\pi)^{-\frac12}\abs{H_N - \Ze} = (2\pi)^{-\frac12}\abs{h(Z)}$. On the other hand, Lemma~\ref{lem:weak} says that $\abs{F_{N}(H_N) - \Phi(H_N)} \leq G(\alpha)$. From these inequalities we obtain
{\small\begin{align}
\abs{F_{N}(H_N) - \Phi(\Ze)}
 = \abs{F_{N}(H_N) - \Phi(H_N) + \Phi(H_N) - \Phi(\Ze)} 
\leq G(\alpha) + (2\pi)^{-\frac12}\abs{h(Z)} \enspace. \label{eq:lem:u1:1}
\end{align}}%
Using the inequality \eqref{eq:lem:u1:1} and the Schwarz inequality and the identities $\E[\abs{\Ze}] = (2/\pi)^\frac12$, $\E[\Ze^2] = 1$, and
{\small\begin{align}
\E[\abs{h(Z)}^2] 
= \left(\frac{1}{2}\frac{\ns}{\cm}\right)^2 \E\left[\left(Z^\T\Hesst Z - 1\right)^2\right]
= \left(\frac{1}{2}\frac{\ns}{\cm}\right)^2 (2\Tr(\Hesst^2))
= \frac{\alpha^2}{2} \enspace,\label{eq:lem:u1:2}
\end{align}}%
we have
{\small\begin{align*}
\E[\abs{F_{N}(H_N) - \Phi(\Ze)} \cdot \abs{\Ze}]
&\leq G(\alpha) \E[\abs{\Ze}] + (2\pi)^{-\frac12} \E[\abs{h(Z)}\cdot\abs{\Ze}]
\\
&\leq G(\alpha) \E[\abs{\Ze}]  + (2\pi)^{-\frac12} \E[h(Z)^2]^\frac12\E[\Ze^2]^\frac12
\\
&= (2 / \pi)^\frac12 G(\alpha)  + (2\pi)^{-\frac12} \E[h(Z)^2]^\frac12
\\
&= (2 / \pi)^\frac12 G(\alpha) + (4\pi)^{-\frac12} \alpha
  \enspace.  
\end{align*}}%
Altogether, we obtain the inequality stated in the lemma. This completes the proof.
\end{proof}

\subsection{Proof of Lemma~\ref{lem:u2}}
\label{apdx:lem:u2}

\begin{proof}
Analogously to the proof of Lemma~\ref{lem:u1}, we have
{\small%
  \begin{align*}
\MoveEqLeft[2]\abs{\E[u_2(F_{f}(f(X))) (Z^\T\Hesst Z - 1) ] - \E[u_2(\Phi(\Ze)) (Z^\T\Hesst Z - 1) ]}
\\
&\leq L_2\E[\abs{F_{N}(H_N) - \Phi(\Ze)} \cdot \abs{Z^\T\Hesst Z - 1}]
\\
&\leq L_2\E[(G(\alpha) + (2\pi)^{-\frac12}\abs{h(Z)} ) \abs{Z^\T\Hesst Z - 1}]
\\
&= L_2(G(\alpha) \E[ \abs{Z^\T\Hesst Z - 1} ] + (2\pi)^{-\frac12} \E[\abs{h(Z)}\cdot \abs{Z^\T\Hesst Z - 1}])
  \enspace.  
  \end{align*}}%
Applying the inequalities $\E[ \abs{Z^\T\Hesst Z - 1} ] \leq \E[ (Z^\T\Hesst Z - 1)^2 ]^\frac{1}{2} = (2 \Tr(\Hesst^2))^\frac{1}{2}$ and
{\small\begin{equation*}
  \E[\abs{h(Z)}\cdot \abs{Z^\T\Hesst Z - 1}] = \frac12 (\ns/ \cm) \E[(Z^\T\Hesst Z - 1)^2] = (\ns/ \cm)\Tr(\Hesst^2) = \alpha \Tr(\Hesst^2)^\frac12 \enspace,
\end{equation*}}%
we obtain the inequality stated in the lemma. This completes the proof.
\end{proof}

\subsection{Proof of Lemma~\ref{lem:u3}}
\label{apdx:lem:u3}

\begin{proof}
Using Lemma~\ref{lem:u1lip}, we have
{\small\begin{align*}
\MoveEqLeft[2]\abs{\E[u_3(F_{f}(f(X)), F_{f}(f(\tilde{X}))) Z^\T\Hesst\tilde{Z}] - \E[u_3(\Phi(\Ze), \Phi(\Zet)) Z^\T\Hesst\tilde{Z}]}
\\
&\leq \E[\abs{(u_3(F_{N}(H_N), F_{N}(\tilde{H}_N)) - u_3(\Phi(\Ze), \Phi(\Zet))} \cdot \abs{Z^\T\Hesst\tilde{Z}}]
\\
&\leq L_3\E[(\abs{F_{N}(H_N) - \Phi(\Ze)} + \abs{F_{N}(\tilde{H}_N) - \Phi(\Zet)}) \cdot \abs{Z^\T\Hesst\tilde{Z}}]
  \enspace.
\end{align*}}%
Then, using the equality $\E[\abs{Z^\T\Hesst \tilde{Z}} \mid Z] = (2/\pi)^\frac12 \norm{\Hesst Z}$ (since $\abs{Z^\T\Hesst\tilde{Z}}$ given $Z$ is half-normally distributed), the symmetry of $Z$ and $\tilde{Z}$, the Schwarz inequality, and the inequality \eqref{eq:lem:u1:1}, we have
{\small\begin{align*}
\MoveEqLeft[2]\E[(\abs{F_{N}(H_N) - \Phi(\Ze)} + \abs{F_{N}(\tilde{H}_N) - \Phi(\Zet)}) \cdot \abs{Z^\T\Hesst\tilde{Z}}]
\\
&\leq 2 G(\alpha) \E[\abs{Z^\T\Hesst\tilde{Z}}] + 2(2\pi)^{-\frac12}\E[\abs{h(Z)}\cdot\abs{Z^\T\Hesst\tilde{Z}}]
    \enspace.
       \end{align*}}%
On one hand, we have
{\small%
  \begin{multline*}
    \E[\abs{Z^\T\Hesst\tilde{Z}}]
    = \E[\E[\abs{Z^\T\Hesst\tilde{Z}} \mid Z]]
    = (2/\pi)^\frac12 \E[\norm{\Hesst Z}]
    \\
    \leq (2/\pi)^\frac12 \E[\norm{\Hesst Z}^2]^\frac12
    = (2/\pi)^\frac12 \Tr(\Hesst^2)^\frac12
 \enspace,
\end{multline*}%
where we used $\E[\norm{\Hesst Z}^2] = \E[\Tr(\Hesst Z Z^\T \Hesst)] = \Tr(\Hesst \E[Z Z^\T] \Hesst) = \Tr(\Hesst^2)$. 
}%
On the other hand, we have
{\small%
  \begin{multline*}
    \E[\abs{h(Z)}\cdot\abs{Z^\T\Hesst\tilde{Z}}]
    = \E[\abs{h(Z)}\E[\abs{Z^\T\Hesst\tilde{Z}} \mid Z]]
    = (2/\pi)^\frac12\E[\abs{h(Z)}\cdot\norm{\Hesst Z}]\\
    \leq (2/\pi)^\frac12 \E[\abs{h(Z)}^2]^\frac12\E[\norm{\Hesst Z}^2]^\frac12
    = \pi^{-\frac12} \alpha \Tr(\Hesst^2)^\frac12 \enspace,
  \end{multline*}%
}%
where we used $\E[\abs{h(Z)}^2] = \alpha^2 / 2$ derived in \eqref{eq:lem:u1:2}.
Altogether, we obtain the inequality stated in the lemma. This completes the proof.
\end{proof}

\subsection{Proof of Lemma~\ref{lem:os}}
\label{apdx:lem:os}
\begin{proof}
Let $p$ be the probability density function of the one-dimensional standard normal distribution and $p_{i:\lambda}$ be the probability density function of $\NN_{i:\lambda}$ and $p_{i,j:\lambda}$ be the joint probability density function of $\NN_{i:\lambda}$ and $\NN_{j:\lambda}$. It is well known that $p_{i:\lambda}(x) = \lambda \binom{\lambda-1}{i-1} \Phi(x)^{i-1}(1 - \Phi(x))^{\lambda - i} p(x)$
and $p_{i,j:\lambda}(x, y) = \lambda (\lambda-1) \binom{\lambda - 2}{j - 2}\binom{j - 1}{i - 1} \Phi(x)^{i-1}(\Phi(y) - \Phi(x))^{(j - i - 1)}(1 - \Phi(x))^{\lambda - j} p(x)p(y)$
for $i < j$ and $x < y$, and $p_{i,j:\lambda}(x, y) = 0$ for $i < j$ and $x \geq y$. Note also that $p_{i,j:\lambda}(x, y) = p_{j,i:\lambda}(y, x)$. 

The functions $u_1$ and $u_2$ are then written using these p.d.f.s of the normal order statistics as
$\lambda u_1(\Phi(x)) p(x) = \sum_{k=1}^{\lambda} w_k p_{k:\lambda}(x)$ and $\lambda u_2(\Phi(x)) p(x) = \sum_{k=1}^{\lambda} w_k^2 p_{k:\lambda}(x)$. From these identities, we obtain \eqref{eq:os:1} and \eqref{eq:os:2}.
The identity \eqref{eq:os:3} is derived by using $\lambda \E[ u_2(\Phi(\Ze)) (\Ze^2 - 1) ] = \sum_{i=1}^\lambda w_i^2 (\E[\NN_{i:\lambda}^2] - 1)$ and $\E[ u_2(\Phi(\Ze)) (Z^\T \Hesst Z - 1) ] = \E[ u_2(\Phi(\Ze)) (\Ze^2 - 1) ] \ee^\T \Hesst \ee $, where the last equality is proved by using the expression $Z^\T \Hesst Z = \Ze^2 \ee^\T \Hesst \ee + \Ze \ee^\T\Hesst \Zee + \Zee^\T\Hesst\Zee$, the mutual independence between $\Ze$ and $\Zee$, and $\E[\Zee] = 0$ and $\E[\Zee^\T\Hess\Zee] = 1 - \ee^\T \Hesst \ee$.

Using $p_{i,j:\lambda}$, we can write
{\small$$\lambda (\lambda - 1) u_3(\Phi(x), \Phi(y)) p(x)p(y) = \sum_{k=1}^{\lambda-1}\sum_{l=k+1}^{\lambda} w_k w_l \max(p_{k,l:\lambda}(x, y), p_{l,k:\lambda}(x, y))\enspace.$$}%
The equality \eqref{eq:os:4} is obtained by substituting the equality
{\small\begin{align*}
\MoveEqLeft[0]\lambda (\lambda - 1)\E[ u_3(\Phi(\Ze), \Phi(\Zet)) \Ze\Zet ]
\\
&\textstyle= \sum_{k = 1}^{\lambda-1}\sum_{l = k + 1}^{\lambda} w_k w_l \iint \Ze\Zet \max(p_{k,l:\lambda}(x, y), p_{l,k:\lambda}(x, y)) dx dy
\\
&\textstyle= \sum_{k = 1}^{\lambda-1}\sum_{l = k + 1}^{\lambda} w_k w_l \left(\iint_{x<y} x y p_{k,l:\lambda}(x, y) dx dy + \iint_{x\geq y} x y p_{l,k:\lambda}(x, y) dx dy\right)
\\
&\textstyle= \sum_{k = 1}^{\lambda-1}\sum_{l = k + 1}^{\lambda} w_k w_l \left(\iint x y p_{k,l:\lambda}(x, y) dx dy + \iint x y p_{l,k:\lambda}(x, y) dx dy\right)
\\
&\textstyle= 2 \sum_{k = 1}^{\lambda-1}\sum_{l = k + 1}^{\lambda} w_k w_l \iint x y p_{k,l:\lambda}(x, y) dx dy
\\
&\textstyle= 2\sum_{k = 1}^{\lambda-1}\sum_{l = k + 1}^{\lambda} w_k w_l \E[\NN_{k:\lambda}\NN_{l:\lambda}]
       \end{align*}}%
into $\E[ u_3(\Phi(\Ze), \Phi(\Zet)) Z^\T \Hesst \tilde{Z} ] = \E[ u_3(\Phi(\Ze), \Phi(\Zet)) \Ze\Zet ] \ee^\T\Hesst\ee$. The last equality is obtained by using the expression $Z^\T \Hesst \tilde{Z} = \Ze\Zet \ee^\T\Hesst\ee + \Ze\ee^\T\Hesst\Zeet + \Zet\ee^\T\Hesst\Zee + \Zee^\T\Hesst\Zeet$, the mutual independence between $\Ze$, $\tilde\Ze$, $\Zee$, and $\tilde\Zee$, and the equalities $\E[\Zee] = \E[\tilde\Zee] = 0$. 
\end{proof}

\end{document}